\documentclass[11pt]{article}

\usepackage[T1]{fontenc}
\usepackage{microtype}
\usepackage{amsmath,amssymb,amsthm,mathtools}
\usepackage{newtxtext,newtxmath}
\usepackage[a4paper,margin=27mm]{geometry}
\usepackage{booktabs,tabularx,array}
\usepackage{aliascnt}
\usepackage{enumitem}
\usepackage[numbers,sort&compress]{natbib}
\usepackage[colorlinks=false,hidelinks]{hyperref}
\usepackage[capitalise,noabbrev]{cleveref}
\usepackage{fancyhdr}

\setlist[itemize]{leftmargin=1.6em,itemsep=0.15em,topsep=0.3em}
\setlist[enumerate]{leftmargin=1.9em,itemsep=0.15em,topsep=0.3em}
\allowdisplaybreaks[2]
\usepackage{subcaption}

\newtheorem{theorem}{Theorem}[section]
\newaliascnt{lemma}{theorem}
\newtheorem{lemma}[lemma]{Lemma}
\aliascntresetthe{lemma}
\newaliascnt{proposition}{theorem}
\newtheorem{proposition}[proposition]{Proposition}
\aliascntresetthe{proposition}
\newaliascnt{corollary}{theorem}
\newtheorem{corollary}[corollary]{Corollary}
\aliascntresetthe{corollary}
\theoremstyle{definition}
\newaliascnt{definition}{theorem}
\newtheorem{definition}[definition]{Definition}
\aliascntresetthe{definition}
\newaliascnt{remark}{theorem}

\aliascntresetthe{remark}
\newaliascnt{example}{theorem}

\aliascntresetthe{example}
\newaliascnt{observation}{theorem}

\aliascntresetthe{observation}

\newcommand{\cH}{\mathcal H}
\newcommand{\cA}{\mathcal A}
\newcommand{\cC}{\mathcal C}
\newcommand{\cM}{\mathcal M}

\newcommand{\Cl}{\operatorname{cl}}
\newcommand{\Stop}{\operatorname{Stop}}
\newcommand{\MinStop}{\operatorname{MinStop}}
\newcommand{\Tr}{\operatorname{Tr}}
\newcommand{\False}{\operatorname{False}}
\newcommand{\rank}{\operatorname{rank}}

\newcommand{\MaxMod}{\operatorname{MaxMod}}
\newcommand{\Mod}{\operatorname{Mod}}
\newcommand{\HHEnum}{\textsc{HH-Coatom-Enum}}
\newcommand{\HHEx}{\textsc{HH-Coatom-Extension}}
\newcommand{\IncHH}{\textsc{Incremental HH-Coatom}}

\newcommand{\OutputP}{\mathsf{OutputP}}
\newcommand{\DelayP}{\mathsf{DelayP}}
\newcommand{\NP}{\mathsf{NP}}

\newcommand{\one}{\mathbf 1}

\crefname{theorem}{Theorem}{Theorems}
\crefname{lemma}{Lemma}{Lemmas}
\crefname{proposition}{Proposition}{Propositions}
\crefname{corollary}{Corollary}{Corollaries}
\crefname{definition}{Definition}{Definitions}
\crefname{remark}{Remark}{Remarks}
\crefname{example}{Example}{Examples}
\crefname{observation}{Observation}{Observations}

\theoremstyle{definition}
\newtheorem{problem}{Problem}[section]

\crefname{problem}{problem}{problems}
\Crefname{problem}{Problem}{Problems}

\usepackage{tikz}
\usetikzlibrary{
  arrows.meta,
  backgrounds,
  fit,
  positioning,calc
}
\usepackage{tikz-cd}

\tikzset{
  stopvar/.style={
    circle,
    draw, thick,
    minimum size=5.2mm,
    inner sep=0pt,
    font=\small
  },
  stopedge/.style={
    rectangle,
    draw,
    rounded corners=0.8pt,
    minimum size=4.6mm,
    inner sep=0.5pt,
    font=\scriptsize
  },
  compilerblock/.style={
    draw,
    rounded corners,
    align=center,
    inner sep=4pt,
    font=\small,
    fill=black!3
  },
  groupbox/.style={
    draw,
    dashed,
    rounded corners,
    inner sep=6pt
  },
    primitivepanel/.style={
    draw, dashed, rounded corners=3pt, inner sep=6pt
  },
  hedgeA/.style={
    draw, rounded corners=6pt, inner sep=3pt
  },
  hedgeB/.style={
    draw, dashed, rounded corners=6pt, inner sep=3pt
  },
  hedgeC/.style={
    draw, densely dotted, rounded corners=6pt, inner sep=3pt
  }
}

\title{Coatom Enumeration in Hypergraph Horn Functions:
Rank-Three Representations of Horn Model Posets}
\author{
Jianshen Zhu$^1$}
\date{
 $^1$Department of Information Sciences and Technology, Tokyo University of Science, Noda, Chiba 278-8510, Japan \\
}

\begin{document} 
\maketitle
\begin{abstract}
For a finite hypergraph $\cH$, the complements of the models of its
associated definite Horn CNF are exactly the stopping sets of
$\cH$; hence the complements of its coatoms are the inclusion-minimal
nonempty stopping sets.
We study their output-sensitive enumeration from the hypergraph
incidence representation.
Our main result is a representation of arbitrary Horn model posets 
whose incidence size is linear in the incidence length of the normalized Horn input.
Given a Horn CNF $\Gamma$, we construct a hypergraph 
$\cC(\Gamma)$ of rank at most three whose proper-model poset 
is inclusion-order isomorphic to the model poset of $\Gamma$; 
equivalently, each source model has a unique extension to a proper target model.
Thus maximal models of $\Gamma$ correspond bijectively to target
coatoms.
Combining this representation with the maximal-model construction of
Kavvadias, Sideri, and Stavropoulos shows that coatom enumeration is
not in $\OutputP$ unless $\mathsf P=\mathsf{NP}$, even when all
hyperedges have size two or three.
Incidence splitting reduces maximum element frequency to three while
preserving the stopping-set poset, and a local replacement of
two-element hyperedges yields the same lower bound for three-uniform
hypergraphs of maximum element frequency at most three.
These thresholds are conditionally sharp for arbitrary-order enumeration: rank at 
most two and maximum element frequency at most two both admit output-linear total-time generation; 
in the frequency-two case, a polynomial-delay, polynomial-space algorithm is also available.
In contrast, coatom extension is $\NP$-complete already for
three-uniform hypergraphs of exact element frequency two.
\end{abstract}

\noindent
\textbf{Keywords.}
Horn CNF; hypergraph Horn function; coatom; stopping set; antikey;
output-sensitive enumeration; closure system.

\section{Introduction}\label{sec:introduction}

Hypergraph Horn functions, introduced and characterized by
\citet{BercziBorosMakino2024}, are definite Horn functions that admit
a representation by a finite hypergraph.
More precisely, a hypergraph $\cH\subseteq 2^V$ represents the
definite Horn CNF
\begin{equation}
  \Phi_{\cH}
  =
  \bigwedge_{A\in\cH}\ \bigwedge_{v\in A}
  \bigl((A\setminus\{v\})\to v\bigr).
  \label{eq:circular-formula1}
\end{equation}
Thus each hyperedge $A$ encodes the family of implications in which
every element of $A$ is implied by all the others.
The models of $\Phi_{\cH}$ form an intersection-closed family and
therefore define a closure system on $V$.

In this paper, we study the coatoms of this closure system, namely, the closed sets
that are maximal among the proper closed sets.
In database terminology these are antikeys, whose relation with
minimal keys is classical
\citep{Thi1986,DemetrovicsThi1987,Son2006}.
Their complements admit an especially simple description.
For $M\subseteq V$ and $S=V\setminus M$, the set $M$ is closed if and only if
\begin{equation}
  |A\cap S|\neq 1
  \qquad\text{for every }A\in\cH.
  \label{eq:stopping-condition-intro}
\end{equation}
Consequently, the complements of the coatoms are precisely the
inclusion-minimal nonempty sets satisfying
\eqref{eq:stopping-condition-intro}.

Condition~\eqref{eq:stopping-condition-intro} is also the standard
stopping-set condition from coding theory.
Given a binary parity-check matrix $H$, take its columns as the ground
set and the support of each row as a hyperedge.
A set $S$ is then a stopping set of the Tanner graph of $H$ exactly
when no row support intersects $S$ in a single coordinate.
On the binary erasure channel, such sets characterize residual
erasure patterns on which iterative peeling decoding may fail
\citep{DiEtAl2002,RichardsonUrbanke2008}.
Their role in finite-length performance has motivated the study of
stopping distance, stopping redundancy, and parity-check matrix
extensions that eliminate small stopping sets
\citep{SchwartzVardy2006,HanSiegel2007,FalsafainMousavi2015}.
Algorithms have also been developed for finding all stopping sets up
to a prescribed size in LDPC matrices
\citep{RosnesYtrehus2009,WangKulkarniPoor2009,RosnesEtAl2012}.
These algorithms address bounded-size, code-specific search, whereas
we study the output-sensitive generation of the complete family of
inclusion-minimal nonempty stopping sets from an arbitrary incidence
representation.

We call the resulting enumeration problem \HHEnum. 
The restriction to inclusion-minimal stopping sets is natural from the closure-system viewpoint: 
by \cref{prop:complement-duality}, they are exactly the complements of the 
coatoms of $\Phi_{\cH}$. This output family behaves differently from 
the implicate sets studied by \citet{BercziBorosMakino2024}, 
which can be generated with polynomial delay. We show that coatom enumeration 
is not output-polynomial unless $\mathsf P=\mathsf{NP}$, even under strong rank and frequency restrictions.

The problem also belongs to the broader literature on translations
between compact representations of closure systems.
General relations among lattices, closure systems, and implicational
bases are surveyed by \citet{BertetEtAl2018}.
In Horn logic, meet-irreducible closed sets form the characteristic
model representation studied by \citet{Khardon1995}.
Every coatom of a finite closure lattice is meet-irreducible, but the
converse need not hold; hence enumerating all meet-irreducible closed
sets is a broader output problem than the one considered here.
Related algorithmic results have been obtained for $k$-meet-semidistributive 
lattices \citep{BeaudouMaryNourine2017}, for implicational bases 
admitting suitable hierarchical decompositions \citep{NourineVilmin2023}, 
and for acyclic implicational bases of bounded degree \citep{DefrainOhanaVilmin2025}.

Lattice dualization from an implicational basis is another related
task.
\citet{DefrainNourine2020} proved that it is not output-polynomial
unless $\mathsf P=\mathsf{NP}$ even when all implication premises
have size at most two.
These results do not subsume the present ones.
Lattice dualization takes antichains in an implicationally represented
lattice as part of its input, whereas we enumerate the coatoms
directly from a hypergraph Horn incidence representation.
Moreover, bounds on premise size or degree in an arbitrary 
implicational basis do not directly translate into bounds on rank or 
element frequency in the hypergraph Horn incidence representation 
considered here. Our contribution is a representation of the entire 
Horn model poset by a rank-at-most-three hypergraph Horn function 
whose incidence length is linear in that of the input Horn CNF 
and which gives each source model a unique target extension, 
together with coatom-specific rank and frequency thresholds.

For the lower bound developed here, we use maximal-model enumeration
for Horn CNFs.
\citet{KavvadiasSideriStavropoulos2000} proved that maximal Horn models 
cannot be enumerated in output-polynomial time unless $\mathsf P=\mathsf{NP}$. 
Their result does not transfer directly, because an arbitrary Horn CNF need not admit a representation 
by the symmetric all-but-one implication bundles in \eqref{eq:circular-formula1}. 
A reduction must therefore preserve the maximal-model family while converting the formula into a hypergraph Horn representation.

Our main representation theorem provides such a conversion. 
Given a Horn CNF $\Gamma$ on a variable set $X$, it constructs a hypergraph $\cC(\Gamma)$ 
of rank at most three such that the model poset of $\Gamma$ is inclusion-order isomorphic 
to the proper-model poset of $\Phi_{\cC(\Gamma)}$. Equivalently, if $M$ is a model of $\Gamma$, 
then its false-variable set $X\setminus M$ has a unique extension to a nonempty stopping set of $\cC(\Gamma)$; 
every nonempty stopping set arises in this way, and the correspondence preserves and reflects inclusion. 
Consequently, maximal models of $\Gamma$ correspond bijectively to target coatoms, 
and the auxiliary elements introduce no additional coatoms. 
The construction has linear incidence size and polynomial-time forward and inverse maps.
As a structural consequence, every finite closure system presented by a Horn basis 
is isomorphic to the proper-model poset of a rank-at-most-three hypergraph Horn function. 
Hence every finite lattice admits such a representation through its lattice of principal ideals; 
see \cref{cor:rank3-universality}. This representation uses auxiliary elements and 
does not assert that the original Horn function is itself hypergraph Horn.

Combining the representation theorem with the hard Horn formulas 
of \citet{KavvadiasSideriStavropoulos2000} shows that $\textsc{HH-Coatom-Enum}$ 
is not in $\OutputP$ unless $\mathsf P=\mathsf{NP}$, even when every hyperedge 
has size two or three. The compiler controls hyperedge size but not element frequency. 
We therefore apply an incidence-splitting transformation that replaces repeated 
occurrences by equality-linked copies. This transformation preserves the entire 
stopping-set poset and reduces maximum element frequency to three.
To remove the remaining two-element hyperedges, we replace each of them 
by four three-element hyperedges. The replacement preserves every original minimal 
stopping set and introduces exactly three explicitly known auxiliary minimal stopping 
sets for each replaced hyperedge. Removing these auxiliary outputs transfers 
the lower bound to three-uniform hypergraphs of maximum element frequency at most three.

Under the parity-check interpretation, hypergraph rank is the maximum check-node degree, 
or maximum row weight, and element frequency is the maximum variable-node degree, 
or maximum column weight. The final hard instances therefore have check-node 
degree exactly three and variable-node degree at most three.

The neighboring positive cases make these bounds conditionally sharp for arbitrary-order enumeration. 
If every hyperedge has size at most two, the closure system decomposes into equality components, 
and its coatoms can be generated in output-linear total time after normalization. 
If every element has frequency at most two, the elements can be viewed as edges 
of an auxiliary multigraph whose vertices are the hyperedges; the inclusion-minimal stopping 
sets then correspond to graphic circuits. The optimal cycle-listing algorithm of \citet{BirmeleEtAl2013} 
yields output-linear total time, while a direct implementation gives polynomial delay 
and polynomial working space. Thus both rank two and frequency two admit output-linear 
total-time enumeration, whereas frequency three already permits a conditional non-$\OutputP$ lower bound.
The extension problem has a different threshold. It remains $\NP$-complete for three-uniform hypergraphs 
in which every element has frequency exactly two, equivalently for Tanner graphs with check-node 
degree three and variable-node degree two. 
These degree restrictions concern the given incidence or parity-check representation. 
Hypergraphs representing the same Horn function have the same stopping-set family, 
whereas different parity-check matrices for the same code may define different Horn functions 
and hence different stopping-set families. 
The results are summarized in \cref{tab:thresholds}.

\begin{table}[t]
\centering
\caption{Complexity at the rank and frequency thresholds.}
\label{tab:thresholds}
\small
\begin{tabularx}{\textwidth}{
  @{}
  >{\raggedright\arraybackslash}p{0.25\textwidth}
  >{\raggedright\arraybackslash}X
  >{\raggedright\arraybackslash}p{0.30\textwidth}
  @{}
}
\toprule
Input class
  & Arbitrary-order enumeration
  & Extension \\
\midrule
$\rank(\cH)\le2$
  & output-linear after normalization; polynomial space
  & in $\mathsf P$ \\[1mm]

$\Delta(\cH)\le2$
  & output-linear total time; also in $\DelayP$ with polynomial space
  & $\NP$-complete even for $3$-uniform inputs with
    $\Delta(\cH)=2$ \\[1mm]

$\cH$ is $3$-uniform and $\Delta(\cH)\le3$
  & not in $\OutputP$ unless $\mathsf P=\mathsf{NP}$
  & $\NP$-complete already for $3$-uniform inputs in which every
    element has frequency exactly two \\ \\
\bottomrule
\end{tabularx}
\end{table}

These results complete the classification of \HHEnum\ under
arbitrary-order enumeration for input classes defined only by fixed
upper bounds on rank and maximum element frequency.
If the rank bound or the frequency bound is at most two, 
the corresponding output-sensitive algorithms apply. 
If both bounds are at least three, the three-uniform hard instances show 
that the problem is not in $\OutputP$ unless $\mathsf P=\mathsf{NP}$.
The complexity also depends on the task imposed on the coatom family. 
One arbitrary coatom can be found in polynomial time, whereas minimizing 
its complement is $\NP$-hard. 
Constrained extension and deciding whether a given 
explicit list consists of coatoms and omits another coatom are $\NP$-complete. 
The latter remains hard even when every listed set is promised to be a coatom.
Complete arbitrary-order enumeration 
is not in $\OutputP$ unless $\mathsf P=\mathsf{NP}$.
As an additional comparison with classical hypergraph dualization, 
we give a direct linear-size, output-bijective reduction from 
explicit minimal-transversal enumeration to $\textsc{HH-Coatom-Enum}$. 
The construction maps every source transversal to exactly one nonempty 
stopping set and introduces no additional nonempty stopping sets. 
It does not preserve bounded rank or bounded frequency, so it is separate from the threshold reductions above. 
Its proof, exact output maps, and boundary cases are given in Appendix~\ref{app:transversal}.

The rest of the paper is organized as follows. 
\Cref{sec:prelim} introduces the hypergraph Horn representation, 
stopping-set duality, and the output-sensitive complexity model.
\Cref{sec:realization} develops the representation construction 
and proves the Horn-model-poset theorem. 
\Cref{sec:consequences} transfers the maximal-model lower 
bounds to coatom enumeration, the incremental problem, and constrained extension.
\Cref{sec:thresholds} proves the rank-two and frequency-two positive results, 
reduces maximum element frequency to three, and establishes the three-uniform lower bounds. 
Finally, \Cref{sec:conclusion} concludes the paper. 
Appendix~\ref{app:kss} verifies the Kavvadias--Sideri--Stavropoulos reduction, 
Appendix~\ref{app:freq2-extension} supplies the triangle expansion used 
for the exact-frequency-two extension lower bound, 
Appendix~\ref{app:cycle-delay} gives the direct polynomial-delay implementation 
for the frequency-two enumerator, and 
Appendix~\ref{app:transversal} gives a linear-size, 
output-bijective embedding of minimal-transversal enumeration into \HHEnum{}.

\section{Preliminaries}
\label{sec:prelim}

This section fixes the notation and complexity conventions used
throughout the paper.  We first recall Horn formulas and their
hypergraph Horn representations.  We then introduce stopping sets and
their complement duality with closed sets, before formally defining
the enumeration and extension problems considered later.

\subsection{Horn formulas and hypergraph Horn functions}
\label{sec:horn-hypergraph-prelim}

Let $X$ be a finite set of Boolean variables.  We identify a truth
assignment with the set $M\subseteq X$ of variables assigned true.
A Horn clause is written either as a definite implication
$A\to b$, where $A\subseteq X$ and $b\in X\setminus A$, or as a
negative implication $A\to\bot$.
The assignment $M$ satisfies $A\to b$ if $A\subseteq M$ implies
$b\in M$, and it satisfies $A\to\bot$ if $A\nsubseteq M$.
A Horn conjunctive normal form (CNF) is a conjunction of Horn clauses.
An assignment is a \emph{model} of a Horn CNF $\Gamma$ if it satisfies
every clause of $\Gamma$.
We denote its model family by
\(
  \Mod(\Gamma)=\{M\subseteq X:M\models\Gamma\}.
\)
A model $M$ is \emph{maximal} if no model of $\Gamma$ properly
contains it.

We regard premises as sets and remove duplicate clauses.
Empty premises are permitted.
Thus $\varnothing\to b$ forces $b$ to be true, whereas
$\varnothing\to\bot$ makes the formula inconsistent.
For a normalized Horn CNF, we measure the input size by its incidence
length
\[
  \|\Gamma\|_{\mathrm{inc}}
  =
  |X|+|\Gamma|
  +\sum_{A\to b\in\Gamma}(|A|+1)
  +\sum_{A\to\bot\in\Gamma}|A|.
\]
A raw incidence-list input can be converted to this form in polynomial
time.

The target class studied in this paper consists of hypergraph Horn
functions, introduced and characterized by
\citet{BercziBorosMakino2024}.
A definite Horn function on a finite ground set $V$ is hypergraph Horn
if it admits a representation by a hypergraph
$\cH\subseteq 2^V$ through the CNF
\begin{equation}
  \Phi_{\cH}
  =
  \bigwedge_{A\in\cH}\ \bigwedge_{v\in A}
  \bigl((A\setminus\{v\})\to v\bigr).
  \label{eq:circular-formula}
\end{equation}
Thus each hyperedge contributes the family of implications in which
every one of its elements is implied in turn by all the others.

We take the hypergraph incidence list, rather than the explicitly
expanded CNF, as the input representation.
Before measuring its size, we remove repeated incidences within an
edge, duplicate hyperedges, and empty hyperedges.
The last may be discarded because an empty hyperedge contributes no
implication to \eqref{eq:circular-formula}.
For a normalized hypergraph, let
\(
  L(\cH)=|V|+|\cH|+\sum_{A\in\cH}|A|
\)
be its incidence length.
Its rank and maximum element frequency are
\[
  \rank(\cH)=\max_{A\in\cH}|A|,
  \qquad
  \Delta(\cH)
  =
  \max_{v\in V}|\{A\in\cH:v\in A\}|,
\]
where an empty maximum is understood to be zero.
In particular, frequency is measured after duplicate hyperedges have
been removed.
A raw incidence list of length $\widehat L$ can be normalized
deterministically in $O(\widehat L\log\widehat L)$ time by sorting
canonical edge encodings.

Since $\Phi_{\cH}$ is a definite Horn CNF, its model family is closed
under intersection and contains $V$.
It therefore forms a closure system.
For $Y\subseteq V$, the associated closure is
\[
  \Cl_{\cH}(Y)
  =
  \bigcap\{M\in\Mod(\Phi_{\cH}):Y\subseteq M\}.
\]
Equivalently, $\Cl_{\cH}(Y)$ is obtained by forward chaining from $Y$
with the implications in \eqref{eq:circular-formula}.
This computation can be carried out directly on the hypergraph
incidence representation.
For each hyperedge, one maintains the number of its elements not yet
present; when exactly one remains, that element is inserted.
Each vertex is inserted at most once and each incidence is processed
only constantly many times, so the computation takes
$O(L(\cH))$ time.
It uses $O(|V|+|\cH|)$ additional space once the incidence adjacency
lists have been constructed.

A \emph{coatom} of the closure system $\Mod(\Phi_{\cH})$ is a proper closed set that is maximal among the
proper closed sets.
We denote the family of coatoms by
\[
  \cM(\Phi_{\cH})
  =
  \max_{\subseteq}
  \bigl(\Mod(\Phi_{\cH})\setminus\{V\}\bigr).
\]
This family may be empty.

\subsection{Stopping sets and complement duality}
\label{sec:stopping-prelim}

To describe the complements of closed sets, let
$M\subseteq V$ and put $S=V\setminus M$.
For a hyperedge $A\in\cH$, the implication bundle contributed by $A$
is violated by $M$ precisely when exactly one element of $A$ lies
outside $M$, or equivalently when $|A\cap S|=1$.
Thus the complements of closed sets are characterized by the
condition that no hyperedge meets them in exactly one element.

When the hyperedges are the row supports of a binary parity-check
matrix, this is the standard stopping-set condition for the associated
Tanner graph
\citep{DiEtAl2002,RichardsonUrbanke2008}.
We adopt the same terminology for an arbitrary hypergraph.

\begin{definition}[Stopping set]
\label{def:stopping-set}
A set $S\subseteq V$ is a \emph{stopping set} of $\cH$ if
$|A\cap S|\ne1$ for every $A\in\cH$.
We denote the family of all stopping sets by
\[
  \Stop(\cH)
  =
  \{S\subseteq V:|A\cap S|\ne1
    \text{ for every }A\in\cH\}.
\]
Its inclusion-minimal nonempty members are called
\emph{minimal stopping sets}, and their family is denoted by
\[
  \MinStop(\cH)
  =
  \min_{\subseteq}
  \bigl(\Stop(\cH)\setminus\{\varnothing\}\bigr).
\]
\end{definition}

The preceding observation gives the basic duality used throughout the
paper.

\begin{proposition}
\label{prop:complement-duality}
For $M\subseteq V$ and $S=V\setminus M$,
\[
  M\models\Phi_{\cH}
  \quad\Longleftrightarrow\quad
  S\in\Stop(\cH).
\]
Hence complementation is an order-reversing bijection from
$\Mod(\Phi_{\cH})$ onto $\Stop(\cH)$, and
\[
  \MinStop(\cH)
  =
  \{V\setminus M:M\in\cM(\Phi_{\cH})\}.
\]
\end{proposition}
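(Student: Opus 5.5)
The plan is to prove the equivalence clause by clause, then read off the two consequences from general facts about complementation. Fix $M\subseteq V$ and put $S=V\setminus M$. By definition, $M\models\Phi_{\cH}$ exactly when $M$ satisfies every implication $(A\setminus\{v\})\to v$ with $A\in\cH$ and $v\in A$. So the first step is to show that, for a fixed hyperedge $A$, the whole bundle $\{(A\setminus\{v\})\to v: v\in A\}$ is satisfied by $M$ if and only if $|A\cap S|\neq 1$.

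For this step, note that a single implication $(A\setminus\{v\})\to v$ is violated by $M$ precisely when $A\setminus\{v\}\subseteq M$ and $v\notin M$. Equivalently, $v\in S$ and $(A\setminus\{v\})\cap S=\varnothing$, that is, $A\cap S=\{v\}$.
\begin{itemize}
\item If $|A\cap S|=1$, say $A\cap S=\{v\}$, then the implication for that $v$ is violated, so the bundle fails.
\item If $|A\cap S|\neq1$, then no $v\in A$ satisfies $A\cap S=\{v\}$, so every implication in the bundle holds.
\end{itemize}
Taking the conjunction over all $A\in\cH$ gives $M\models\Phi_{\cH}$ if and only if $S\in\Stop(\cH)$. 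The degenerate case $|A|=1$, where the premise is empty, is covered by the same argument and needs no separate treatment.

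For the consequences, complementation $M\mapsto V\setminus M$ is an involutive bijection on $2^V$ that reverses inclusion. By the equivalence just proved, it maps $\Mod(\Phi_{\cH})$ exactly onto $\Stop(\cH)$, so it restricts to an order-reversing bijection between these two families. Under this bijection $V$ corresponds to $\varnothing$. Hence the proper models correspond to the nonempty stopping sets.

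An order-reversing bijection sends maximal elements to minimal elements. Therefore $\max_{\subseteq}(\Mod(\Phi_{\cH})\setminus\{V\})$ is mapped onto $\min_{\subseteq}(\Stop(\cH)\setminus\{\varnothing\})$, which gives $\MinStop(\cH)=\{V\setminus M: M\in\cM(\Phi_{\cH})\}$. This also covers the case where both families are empty.

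There is no real obstacle here. The only point needing care is the clause-level equivalence, in particular checking that the violation condition $A\cap S=\{v\}$ is stated correctly, including when $A$ is a singleton.
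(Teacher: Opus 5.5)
Your proof is correct and follows the paper's argument: an implication $(A\setminus\{v\})\to v$ is violated exactly when $A\cap S=\{v\}$, and complementation reverses inclusion and exchanges $V$ with $\varnothing$. You spell out the two cases for a hyperedge and the maximal-to-minimal step in more detail than the paper, but the route is the same.
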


\begin{proof}
For $A\in\cH$ and $v\in A$, the implication
$(A\setminus\{v\})\to v$ is violated by $M$ precisely when
$A\setminus\{v\}\subseteq M$ and $v\notin M$.
With $S=V\setminus M$, this is equivalent to $A\cap S=\{v\}$.
Thus all implications contributed by $A$ are satisfied exactly when
$|A\cap S|\ne1$.

The remaining statements follow because complementation reverses
inclusion and maps the top closed set $V$ to the empty stopping set.
\end{proof}

Thus enumerating coatoms and enumerating minimal stopping sets are
equivalent up to complementation.
We state the computational problems in terms of coatoms and use
stopping sets in constructions and reductions.

\subsection{Computational problems and complexity conventions}
\label{sec:enumeration-prelim}

We now formulate the
three computational problems studied in this paper and fix the
output-sensitive complexity conventions used below.
All hypergraphs in the following problems are given by their
normalized incidence lists.  Additional subsets of the ground set are
encoded by characteristic vectors unless stated otherwise.

\begin{problem}[\HHEnum{}]
\label{prob:hh-enum}
Given a normalized hypergraph $\cH$ on $V$, enumerate all coatoms of
the closure system $\Mod(\Phi_{\cH})$, each exactly once and in
arbitrary order.
\end{problem}

By \cref{prop:complement-duality}, this is equivalent to enumerating
all members of $\MinStop(\cH)$ and complementing the resulting sets.

\begin{problem}[\HHEx{}]
\label{prob:hh-extension}
Given a normalized hypergraph $\cH$ on $V$ and disjoint sets
$P,N\subseteq V$, decide whether $\Mod(\Phi_{\cH})$ has a coatom $M$
such that
\[
  P\subseteq M
  \qquad\text{and}\qquad
  M\cap N=\varnothing.
\]
\end{problem}

In stopping-set coordinates, the same problem asks whether there is
an $S\in\MinStop(\cH)$ such that
\(
  N\subseteq S\subseteq V\setminus P.
\)

\begin{problem}[\IncHH{}]
\label{prob:incremental-hh}
Given a normalized hypergraph $\cH$ on $V$ and a list
$\mathcal L=(M_1,\ldots,M_t)$ of pairwise distinct subsets of $V$,
decide whether every $M_i$ is a coatom of
$\Mod(\Phi_{\cH})$ and there exists a coatom that does not appear in
$\mathcal L$.
\end{problem}

Thus a list containing a noncoatom is a no-instance.
We also consider the promise version in which all members of
$\mathcal L$ are guaranteed to be coatoms.

Unless stated otherwise, each coatom is output as its $|V|$-bit
characteristic vector.
Some algorithms are more naturally described in terms of the
complementary minimal stopping set $S=V\setminus M$; whenever a sparse
encoding of $S$ is used instead, this is stated explicitly.
The time needed to write an output is included in the running time.

An enumeration problem belongs to $\OutputP$ if it admits an
algorithm whose total running time is bounded by a polynomial in the
input length and the total encoded length of all outputs.
We call an enumeration algorithm output-linear if its total running time is linear in the input length plus the total encoded length of all outputs.
An algorithm has \emph{polynomial delay} if the preprocessing time
before the first output, the time between consecutive outputs, and
the time from the last output to termination are each bounded by a
polynomial in the input length.
We write $\DelayP$ for the class of enumeration problems admitting
such an algorithm.
Unless an order is explicitly prescribed, outputs may be generated
in arbitrary order.
A polynomial-space bound refers to working memory and excludes the
write-only output stream.

\section{A rank-at-most-three hypergraph representation of Horn model posets}
\label{sec:realization}

We give the main representation theorem in this section.
Starting from an arbitrary Horn CNF, we construct a hypergraph of
rank at most three whose nonempty stopping sets encode the models of
the source formula.
The construction has
linear incidence size, uniquely determines all auxiliary coordinates,
and preserves the inclusion order on the whole model family.

\subsection{Local stopping-set constructions}
\label{sec:local-primitives}

This subsection studies small hyperedge families that realize Boolean
relations through the stopping-set condition.
The three relations considered here are equality $p=q$, implication
$x\to y$, and binary disjunction $c=a\lor b$.
The elements corresponding to the variables of the represented
relation are called \emph{interface elements}.
An auxiliary element used only in one occurrence of a construction is
called \emph{private}; private elements belonging to distinct
occurrences are always chosen to be distinct.

Formally, let $\mathcal E\subseteq 2^Z$ be a hypergraph on a finite
set $Z$.
A Boolean assignment $\alpha\colon Z\to\{0,1\}$ is
\emph{feasible for $\mathcal E$} if
$\sum_{z\in A}\alpha(z)\ne1$ for every $A\in\mathcal E$, or
equivalently, if the support of $\alpha$ is a stopping set of
$\mathcal E$.
We say that $\mathcal E$ \emph{realizes} a Boolean relation on its
interface elements if an assignment to those elements belongs to the
relation exactly when it extends to a feasible assignment on $Z$.
The realization has \emph{unique extension} if this feasible extension
is unique.
When an assignment is fixed, we use the same symbol for a ground
element and its Boolean value.

The equality construction consists of the single hyperedge
$\{p,q\}$.
For interface elements $x,y$ and a private element $q$, define
\(
  \operatorname{Imp}(x,y;q)
  :=
  \bigl\{\{y,q\},\{x,y,q\}\bigr\}.
\)
For input elements $a,b$, an output element $c$, and a fresh private
element $q$, define
\(
  \operatorname{OR}(a,b;c,q)
  :=
  \bigl\{\{c,q\},\{a,b,c\},\{a,c,q\}\bigr\}.
\)
Here the symbol $q$ denotes a different private element in each
occurrence.
The three constructions are depicted in
\cref{fig:local-primitives}.

\begin{figure}[t]
\centering

\begin{subfigure}[b]{0.25\textwidth}
  \centering
  \includegraphics[
    height=3.05cm,
    width=\linewidth,
    keepaspectratio
  ]{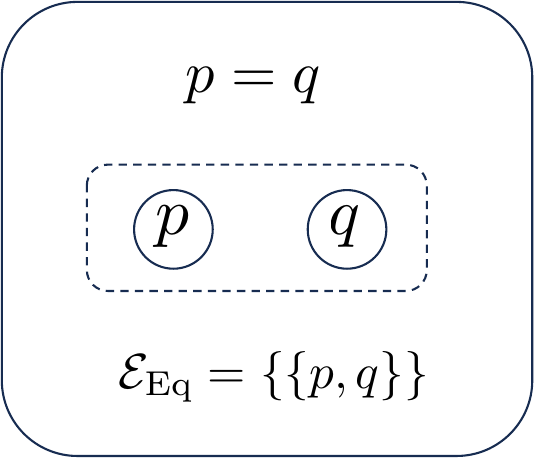}
  \caption{Equality}
  \label{fig:primitive-equality}
\end{subfigure}
\hfill
\begin{subfigure}[b]{0.27\textwidth}
  \centering
  \includegraphics[
    height=3.05cm,
    width=\linewidth,
    keepaspectratio
  ]{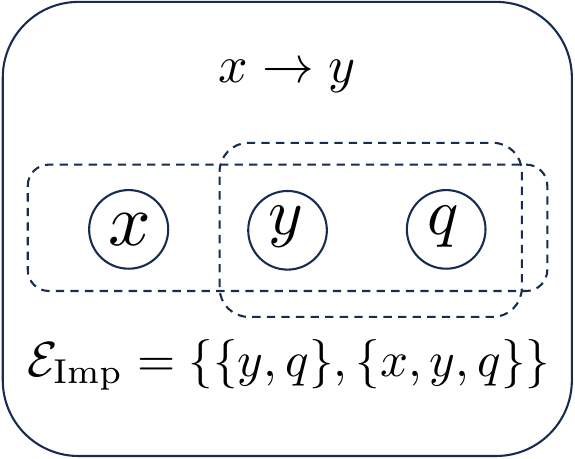}
  \caption{Implication}
  \label{fig:primitive-implication}
\end{subfigure}
\hfill
\begin{subfigure}[b]{0.35\textwidth}
  \centering
  \includegraphics[
    height=3.05cm,
    width=\linewidth,
    keepaspectratio
  ]{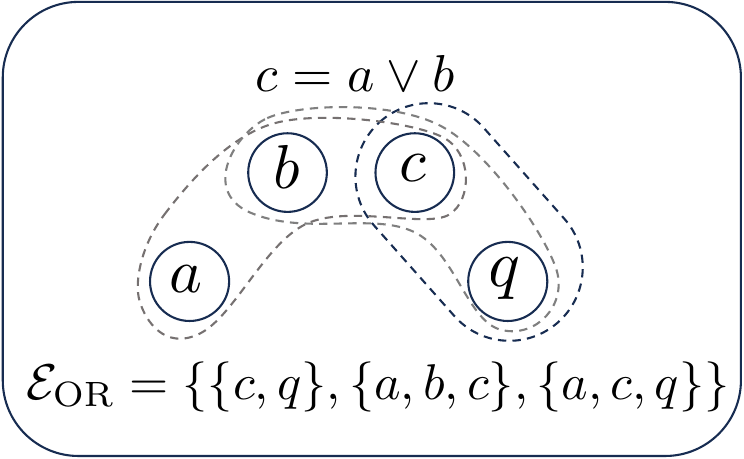}
  \caption{Binary OR}
  \label{fig:primitive-or}
\end{subfigure}

\caption{Local stopping-set constructions.  Each panel shows the
relation on the interface elements together with the hyperedges that
impose it.  The dashed regions represent hyperedges, while the outer
solid border only delimits the panel.  In panels~(b) and~(c), $q$ is
private and is uniquely determined by the interface values.}
\label{fig:local-primitives}
\end{figure}

\begin{lemma}
\label{lem:eq-gadget}
The hyperedge $\{p,q\}$ realizes the equality relation $p=q$.
Equivalently, its feasible assignments are $00$ and $11$.
\end{lemma}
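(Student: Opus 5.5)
The plan is to check the definition of feasibility directly on all four Boolean assignments to the two interface elements. For the one-edge hypergraph $\mathcal E=\{\{p,q\}\}$ on $Z=\{p,q\}$, an assignment $\alpha$ is feasible exactly when $\alpha(p)+\alpha(q)\ne1$. Equivalently, by \cref{def:stopping-set}, the support of $\alpha$ must be a stopping set of $\mathcal E$.

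I will enumerate the four cases. For $\alpha=00$ the sum is $0$ and for $\alpha=11$ it is $2$, so both are feasible. For $\alpha=01$ and $\alpha=10$ the sum is $1$, so both are infeasible. The feasible assignments are therefore exactly $00$ and $11$, which is the equality relation $p=q$.

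Here both $p$ and $q$ are interface elements and there are no private elements. An assignment to the interface thus extends to a feasible assignment on $Z$ only if it is itself feasible, and the extension is trivially unique. This matches the definition of realization (with unique extension). I expect no step to be an obstacle; the only point needing care is reading the definition of ``realizes'' correctly when the auxiliary set is empty.
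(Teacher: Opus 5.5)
Your proposal is correct and follows essentially the paper's argument: the paper likewise observes that $\{p,q\}$ meets the support in exactly one element precisely for the assignments $01$ and $10$, so exactly $00$ and $11$ are feasible. Your added remark that there are no private elements, so the extension is trivially unique, is a harmless clarification the paper leaves implicit.
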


\begin{proof}
The hyperedge $\{p,q\}$ meets the support of the assignment in exactly one
element precisely when one of $p,q$ is one and the other is zero.
Those two assignments are infeasible, while $00$ and $11$ are
feasible.
\end{proof}

\begin{lemma}
\label{lem:imp-gadget}
On the interface elements $(x,y)$,
$\operatorname{Imp}(x,y;q)$ realizes the implication
$x\to y$, equivalently $x\le y$.
More precisely, the family is feasible exactly when $q=y$ and
$x\le y$.
Thus every interface assignment satisfying $x\to y$ has a unique
feasible extension to $q$.
In the coordinate order $(x,y,q)$, the feasible assignments are
$000$, $011$, and $111$.
\end{lemma}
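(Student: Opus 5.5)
The proof is a direct analysis of the two hyperedges of $\operatorname{Imp}(x,y;q)$, in the same spirit as the proof of \cref{lem:eq-gadget}. We fix an assignment to $(x,y,q)$ and read off feasibility from the condition that no hyperedge meets the support in exactly one element.

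The first step uses the two-element hyperedge $\{y,q\}$ alone. By \cref{lem:eq-gadget}, it is satisfied exactly when $y=q$. Every feasible assignment therefore has $q=y$, which already gives uniqueness of the extension to $q$: the private element is forced to copy $y$.

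The second step substitutes $q=y$ into the three-element hyperedge $\{x,y,q\}$. Its intersection with the support then has size $x+2y$. This sum equals $1$ exactly when $x=1$ and $y=0$, and takes the values $0,2,3$ in the other three cases. So, given $q=y$, the second hyperedge is satisfied exactly when $x\le y$.

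Combining the two steps, the family is feasible exactly when $q=y$ and $x\le y$. Listing the cases with $x\le y$ and $q=y$ in the order $(x,y,q)$ gives $000$, $011$ and $111$. In particular, an interface assignment $(x,y)$ extends to a feasible assignment iff $x\to y$ holds, and then the extension is unique. This is precisely the definition of realizing $x\to y$ with unique extension.

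There is no real obstacle here. The only point needing care is to restrict the second hyperedge to assignments with $q=y$ before evaluating it. Evaluated without that restriction, it would seem to allow further assignments such as $x=1$, $y=0$, $q=1$, which the first hyperedge excludes.
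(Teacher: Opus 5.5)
Your proposal is correct and follows essentially the same route as the paper: the equality edge $\{y,q\}$ forces $q=y$, and the triple $\{x,y,q\}$ then rules out exactly $x=1$, $y=0$. The only difference is cosmetic: you compute the intersection size uniformly as $x+2y$, while the paper splits into the cases $y=q=0$ and $y=q=1$.
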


\begin{proof}
The hyperedge $\{y,q\}$ realizes equality and therefore forces $q=y$.
If $y=q=0$, the hyperedge $\{x,y,q\}$ has intersection size $x$ with the
support, so feasibility forces $x=0$.
If $y=q=1$, the same hyperedge has intersection size $2+x$ and is feasible
for either value of $x$.
Hence the interface assignment is feasible exactly when
$x\le y$, which is the Boolean implication $x\to y$, and the private
coordinate is uniquely fixed as $q=y$.
\end{proof}

\begin{lemma}
\label{lem:or-gadget}
On the interface elements $(a,b,c)$,
$\operatorname{OR}(a,b;c,q)$ realizes the relation
$c=a\lor b$.
More precisely, the family is feasible exactly when
$c=q=a\lor b$.
Thus every assignment to the inputs $(a,b)$ has a unique feasible
extension to $(c,q)$.
In the coordinate order $(a,b,c,q)$, the feasible assignments are
$0000$, $0111$, $1011$, and $1111$.
\end{lemma}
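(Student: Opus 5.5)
The plan follows the proof of \cref{lem:imp-gadget}: first use the two-element hyperedge to pin down the private coordinate, then analyze the remaining hyperedges by cases. By \cref{lem:eq-gadget}, the hyperedge $\{c,q\}$ is feasible exactly when $q=c$. So every feasible assignment satisfies $q=c$, and the private coordinate is determined by $c$. It then remains to find, for each value of $c$, which pairs $(a,b)$ make the other two hyperedges $\{a,b,c\}$ and $\{a,c,q\}$ feasible.

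\emph{Case $c=q=0$.} The hyperedge $\{a,c,q\}$ meets the support in $a$ elements, so feasibility forces $a=0$. Then $\{a,b,c\}$ meets the support in $b$ elements, which forces $b=0$. The only feasible assignment here is $0000$, and indeed $a\lor b=0=c$.

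\emph{Case $c=q=1$.} The hyperedge $\{a,c,q\}$ has intersection size $2+a\ge 2$, so it is feasible for either value of $a$. The hyperedge $\{a,b,c\}$ has intersection size $1+a+b$, which is different from $1$ exactly when $a+b\ge1$, i.e.\ when $a\lor b=1$. This gives the feasible assignments $0111$, $1011$, and $1111$, and excludes $0011$.

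Together the two cases show that an assignment is feasible exactly when $c=q=a\lor b$. Since $c$ and $q$ are functions of $(a,b)$, each input pair has exactly one feasible extension. Listing the feasible assignments gives $0000,0111,1011,1111$. No step looks like a real obstacle. The only point that needs care is the case $c=1$: the hyperedge $\{a,b,c\}$ is exactly what excludes $a=b=0$, and $\{a,c,q\}$ must put no further constraint on $a$ there.
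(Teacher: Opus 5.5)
Your proposal is correct and follows essentially the same route as the paper's proof: the paper also uses the edge $\{c,q\}$ to force $q=c$, then splits on $c=q=0$ (the edge $\{a,c,q\}$ forces $a=0$, then $\{a,b,c\}$ forces $b=0$) versus $c=q=1$ (the edge $\{a,c,q\}$ is unrestrictive and $\{a,b,c\}$ has intersection size $1+a+b$). You additionally list the feasible assignments explicitly in the $c=1$ case and note that $0011$ is excluded.
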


\begin{proof}
The hyperedge $\{c,q\}$ realizes equality and therefore forces $q=c$.

Suppose first that $c=q=0$.
Then the hyperedge $\{a,c,q\}$ has intersection size $a$ with the support,
so feasibility forces $a=0$.
With $a=c=0$, the hyperedge $\{a,b,c\}$ has intersection size $b$ and
therefore forces $b=0$.
Thus $c=0$ is feasible exactly when $a=b=0$.

Suppose instead that $c=q=1$.
The hyperedge $\{a,c,q\}$ already contains at least two selected elements
and imposes no restriction on $a$.
The hyperedge $\{a,b,c\}$ has intersection size $1+a+b$ and is feasible
exactly when $a+b\ge1$.
Thus $c=1$ is feasible exactly when at least one of $a,b$ is one.

Combining the two cases gives $c=a\lor b$, and the equality edge
uniquely fixes $q=c$.
\end{proof}

We next use the binary disjunction construction to represent the
disjunction of an arbitrary finite set of inputs.
For a finite set $A$, the intended interface relation is
\(
  o_A=\bigvee_{a\in A}a,
\)
where the empty disjunction is understood to be zero.
Fix an arbitrary ordering $A=\{a_1,\ldots,a_k\}$.
If $k=0$, introduce an output element $o_A$ together with the
singleton hyperedge $\{o_A\}$, which forces $o_A=0$.
If $k=1$, take $o_A=a_1$ and introduce no new element.
For $k\ge2$, introduce intermediate output elements
$o_2,\ldots,o_k$ and private elements $q_2,\ldots,q_k$.
Add
$\operatorname{OR}(a_1,a_2;o_2,q_2)$ and, for each
$3\le i\le k$, add
$\operatorname{OR}(o_{i-1},a_i;o_i,q_i)$.
The output of the resulting OR tree is $o_A=o_k$.

\begin{lemma}
\label{lem:or-tree}
Every assignment to the elements of $A$ has a unique feasible
extension to the elements introduced above, and the output is
$o_A=\bigvee_{a\in A}a$, where the empty disjunction is zero.  Every
auxiliary value is a monotone Boolean function of the input values.
\end{lemma}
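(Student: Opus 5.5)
The plan is a case split on $k=|A|$, with an induction along the chain for $k\ge2$, using \cref{lem:or-gadget} as the only substantive ingredient.

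For $k=0$, the singleton hyperedge $\{o_A\}$ is feasible exactly when $o_A=0$. This gives a unique extension, and the output is the constant zero, which is trivially monotone.

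For $k=1$, no element or hyperedge is introduced. The extension is the empty one, which is unique, and $o_A=a_1$, which is monotone.

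For $k\ge2$, I would prove by induction on $i$, for $2\le i\le k$, the following claim. Every assignment to $a_1,\ldots,a_k$ has exactly one extension to $o_2,q_2,\ldots,o_i,q_i$ that is feasible for the first $i-1$ OR constructions. In that extension, $o_j=q_j=a_1\lor\cdots\lor a_j$ for every $j\le i$.

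The base case $i=2$ is exactly \cref{lem:or-gadget} applied to $\operatorname{OR}(a_1,a_2;o_2,q_2)$.

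For the step from $i-1$ to $i$, the new elements $o_i,q_i$ occur only in the hyperedges of $\operatorname{OR}(o_{i-1},a_i;o_i,q_i)$. The earlier hyperedges do not mention $o_i,q_i$, and their feasibility conditions are unchanged. So a feasible extension of the whole family restricts to a feasible extension for the first $i-1$ constructions, which by the induction hypothesis is unique. Given the now-fixed values of $o_{i-1}$ and $a_i$, \cref{lem:or-gadget} says the new construction is feasible exactly when $o_i=q_i=o_{i-1}\lor a_i$. By induction this equals $a_1\lor\cdots\lor a_i$.

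One point to state explicitly is that feasibility of a union of hyperedge families is the conjunction of feasibility for each family, since the condition is checked edge by edge. This is what lets the lemma be applied locally. Taking $i=k$ gives uniqueness and $o_A=\bigvee_{a\in A}a$.

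Monotonicity then follows at once, because each auxiliary value $o_j$ or $q_j$ equals a disjunction of input values, and disjunctions are monotone.

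I see no real obstacle. The only care needed is that the private elements $q_i$ and the intermediate outputs $o_i$ are distinct and fresh. This guarantees that each new OR construction introduces exactly two new elements, and that the earlier constructions place no constraints on them.
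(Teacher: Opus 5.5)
Your proposal is correct and matches the paper's proof: the same case split on $k$, the same induction along the chain using \cref{lem:or-gadget}, and the same observation that every auxiliary value is the disjunction of a prefix of the inputs and is therefore monotone. Your explicit remarks that feasibility is checked edge by edge and that the $o_i,q_i$ are fresh spell out details the paper leaves implicit.
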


\begin{proof}
If $k=0$, the singleton hyperedge $\{o_A\}$ forces $o_A=0$.  The case
$k=1$ is immediate.  Suppose that $k\ge2$.  By
\cref{lem:or-gadget}, the first binary construction uniquely forces
$o_2=a_1\lor a_2$ and $q_2=o_2$.  For every $3\le i\le k$, the next
construction uniquely forces $o_i=o_{i-1}\lor a_i$ and $q_i=o_i$.
Induction gives $o_i=a_1\lor\cdots\lor a_i$ for every
$2\le i\le k$, proving the asserted output value and uniqueness.
Each auxiliary coordinate is the disjunction of a prefix of the input
sequence and is therefore monotone.
\end{proof}

The final local construction introduces a root element $r$ that
separates the all-zero assignment from the represented assignments.
Let $W$ be a finite set, and let $r,\rho\notin W$ be fresh elements.
Add the hyperedge $\{r,\rho\}$ and, for every $w\in W$, the
hyperedge $\{w,r,\rho\}$.

\begin{lemma}
\label{lem:root}
This construction is feasible exactly in the following two cases.
If $r=0$, then $\rho=0$ and every element of $W$ is zero.
If $r=1$, then $\rho=1$, while the values on $W$ are arbitrary.
Consequently, the all-zero assignment is the only feasible assignment
with $r=0$.
\end{lemma}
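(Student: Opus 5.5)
The plan is to argue exactly as in \cref{lem:imp-gadget} and \cref{lem:or-gadget}: first use the two-element hyperedge to tie $\rho$ to $r$, and then read off what each three-element hyperedge requires. By \cref{lem:eq-gadget}, the hyperedge $\{r,\rho\}$ is satisfied exactly when $\rho=r$. Under $\rho=r$, the only remaining hyperedges are the $\{w,r,\rho\}$ for $w\in W$, and each of them depends only on $w$ and the common value $r=\rho$. So feasibility of the whole construction reduces to checking each $w$ separately, and we split into the cases $r=0$ and $r=1$.

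\emph{Case $r=\rho=0$.} Each hyperedge $\{w,r,\rho\}$ meets the support in $w$ elements. Feasibility requires $w\ne1$, so $w=0$ for every $w\in W$. Conversely, the all-zero assignment on $W\cup\{r,\rho\}$ meets every hyperedge in zero elements and is therefore feasible.

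\emph{Case $r=\rho=1$.} The hyperedge $\{r,\rho\}$ meets the support in two elements. Each $\{w,r,\rho\}$ meets it in $2+w\ge2$ elements. Hence every assignment on $W$ is feasible.

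Since $\rho=r$ is forced, these two cases are exhaustive, which proves the characterization. The final sentence follows directly: by the case $r=0$, any feasible assignment with $r=0$ vanishes on $\rho$ and on all of $W$, and the construction involves no other elements. No step presents a real obstacle. The only care needed is to note that the equality edge is what makes the two cases exhaustive.
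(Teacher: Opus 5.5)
Your proposal is correct and follows the paper's proof: you use the equality edge $\{r,\rho\}$ to force $\rho=r$, then split into the cases $r=\rho=0$ (each $\{w,r,\rho\}$ forces $w=0$) and $r=\rho=1$ (each such edge meets the support in at least two elements). Your explicit converse feasibility checks in both cases are a small addition the paper leaves implicit.
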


\begin{proof}
The hyperedge $\{r,\rho\}$ forces $r=\rho$.
If $r=\rho=0$, then each hyperedge $\{w,r,\rho\}$ forces $w=0$.
If $r=\rho=1$, the same hyperedge meets the support in two or three
elements and imposes no condition on $w$.
\end{proof}

\subsection{Encoding Horn models as stopping sets of a hypergraph}
\label{sec:compiler-construction}

We now use the local constructions to encode the models of a Horn CNF
as nonempty stopping sets of a hypergraph.
The encoding uses false-variable coordinates.
More precisely, if $M\subseteq X$ is a source assignment and
$S=X\setminus M$, then a source element $x$ is selected in the target
stopping set exactly when $x$ is false under $M$.

Let $\Gamma$ be a normalized Horn CNF on the variable set $X$.
We write
$\False(\Gamma):=\{X\setminus M:M\models\Gamma\}$
for the family of false-variable sets of its models.
When $S\subseteq X$ is fixed, each source variable $x$ appearing
below denotes the bit $\one[x\in S]$; thus $x=1$ means that $x$ is
false in the original assignment.

Consider a definite clause $A\to b$.
It is violated exactly when $b$ is false and every variable in $A$ is
true.
In false-variable coordinates, it is therefore satisfied exactly when
$b\le\bigvee_{a\in A}a$: whenever $b=1$, at least one premise
variable must also have value one.
A negative clause $A\to\bot$ is satisfied exactly when
$\bigvee_{a\in A}a=1$, since at least one variable in its premise must
be false.
As before, the empty disjunction has value zero.

For each clause, the finite disjunction construction computes the
disjunction of the false-variable bits in its premise.
For a definite clause, we impose the implication from the
false-variable bit of its conclusion to this output.
For a negative clause, we require the same output to equal the root
value.
The root construction is applied to the source elements.
When $r=0$, it forces every source coordinate to zero; the unique
extension properties of the preceding constructions then force every
remaining coordinate to zero.
Thus the empty set is the only stopping set outside the represented
branch.

We now construct the hypergraph $\cC(\Gamma)$.

\begin{enumerate}
  \item For each source variable $x\in X$, introduce a target element,
        also denoted by $x$.

  \item For each clause $C$ with premise $A_C$, apply the finite-input
        disjunction construction to the elements of $A_C$, and denote
        its output by $o_C$.
        All auxiliary elements introduced for different clauses are
        chosen to be distinct.

  \item For each definite clause $C=(A_C\to b_C)$, introduce a private
        element $q_C$ and add
        $\operatorname{Imp}(b_C,o_C;q_C)$.
        This enforces the false-variable condition $b_C\le o_C$.


  \item Introduce fresh elements $r$ and $\rho$ not used elsewhere.
        Add the hyperedge $\{r,\rho\}$ and, for every $x\in X$, the
        hyperedge $\{x,r,\rho\}$.

  \item For each negative clause $C=(A_C\to\bot)$, add the equality
        hyperedge $\{o_C,r\}$.
\end{enumerate}

Let $U_\Gamma$ be the set of all target elements introduced above. 
All auxiliary elements introduced in distinct occurrences of the clause constructions are distinct.

For example, the formula
$\Gamma_{\mathrm{ex}}
=(\{a,b\}\to c)\wedge(\{d\}\to\bot)$
gives the false-variable conditions
$c\le a\lor b$ and $d=1$.
The first is represented by the finite disjunction construction for
$\{a,b\}$ followed by
$\operatorname{Imp}(c,o_C;q_C)$, while the second is represented by
the equality hyperedge $\{d,r\}$.

For every $S\subseteq X$, define the canonical target assignment 
$\eta_\Gamma(S)\subseteq U_\Gamma$ as follows. The selected source 
elements are exactly those in $S$. Every finite disjunction receives 
its unique feasible evaluation from \cref{lem:or-tree}; for every 
definite clause, set $q_C=o_C$; finally, set $r=1$ and 
$\rho=1$. This assignment is defined for every $S\subseteq X$, 
although it need not be a stopping set when 
$X\setminus S$ is not a model of $\Gamma$. 
It can be evaluated in one pass through the construction.


\subsection{The representation theorem}
\label{sec:exact-representation}

We now prove that the construction represents the entire model poset,
not only its maximal elements.

\begin{lemma}
\label{lem:compiler-complete}
If $M\models\Gamma$ and $S=X\setminus M$, then
$\eta_\Gamma(S)$ is a nonempty stopping set of $\cC(\Gamma)$.
\end{lemma}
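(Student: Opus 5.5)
The plan is to verify the stopping-set condition hyperedge by hyperedge, grouping the hyperedges of $\cC(\Gamma)$ by the construction step that introduced them. Each local construction has already been shown to be feasible on exactly a described set of assignments. It therefore suffices to check that the values prescribed by $\eta_\Gamma(S)$ fall into those sets. Nonemptiness is immediate, since $\eta_\Gamma(S)$ sets $r=1$, so $r\in\eta_\Gamma(S)$.

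First, the hyperedges of the finite disjunction construction for each clause $C$. These involve only the source elements of $A_C$ and the auxiliary elements private to that construction. By definition, $\eta_\Gamma(S)$ assigns these auxiliaries the unique feasible evaluation of \cref{lem:or-tree}, so every such hyperedge meets $\eta_\Gamma(S)$ in a number of elements different from one. The same lemma gives $o_C=\bigvee_{a\in A_C}\one[a\in S]$, and this includes the case $A_C=\varnothing$, where the singleton edge $\{o_C\}$ is satisfied because $o_C=0$.

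Second, the root hyperedges. Since $r=\rho=1$, \cref{lem:root} shows that $\{r,\rho\}$ and every $\{x,r,\rho\}$ meet the set in at least two elements.

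Third, the clause-specific edges, which is where the hypothesis $M\models\Gamma$ enters.
\begin{itemize}
  \item For a definite clause $C=(A_C\to b_C)$, the preceding discussion shows that $M$ satisfying $C$ is equivalent, in false-variable coordinates, to $b_C\le o_C$. With $q_C=o_C$, \cref{lem:imp-gadget} states that $\operatorname{Imp}(b_C,o_C;q_C)$ is feasible exactly when $q_C=o_C$ and $b_C\le o_C$. Both conditions hold.
  \item For a negative clause $C=(A_C\to\bot)$, satisfaction means that some premise variable lies in $S$, so $o_C=1=r$. Hence the equality edge $\{o_C,r\}$ meets the set in two elements, by \cref{lem:eq-gadget}.
\end{itemize}

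The only point needing care is the locality argument. Each hyperedge must belong to exactly one construction occurrence, and the feasibility of that occurrence must depend only on the values of its own elements. This holds because auxiliary elements of distinct occurrences are distinct, and the stopping condition is a per-hyperedge condition. Consequently, feasibility of each construction restricted to its own elements implies $|A\cap\eta_\Gamma(S)|\ne1$ for every $A\in\cC(\Gamma)$. I expect no genuine obstacle beyond this bookkeeping, together with a separate check of the degenerate cases $|A_C|\in\{0,1\}$, where $o_C$ is either a new element forced to zero or coincides with a source element.
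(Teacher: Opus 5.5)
Your proposal is correct and follows the paper's own proof essentially step by step. The paper likewise checks feasibility of each OR tree under its canonical evaluation via \cref{lem:or-tree}, of $\operatorname{Imp}(b_C,o_C;q_C)$ from $b_C\le o_C$ and $q_C=o_C$, of the equality edge $\{o_C,r\}$ from $o_C=1=r$, and of the root edges from $r=\rho=1$, with nonemptiness coming from $r\in\eta_\Gamma(S)$. Your locality remark only needs each hyperedge to come from some occurrence, not exactly one, and your explicit check of the cases $|A_C|\in\{0,1\}$ is a small addition that the paper leaves implicit.
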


\begin{proof}
Every finite disjunction is feasible under its canonical evaluation,
by \cref{lem:or-tree}.  Consider a definite clause
$C=(A_C\to b_C)$.  Since $M$ satisfies $C$, its false-variable set
satisfies $b_C\le\bigvee_{a\in A_C}a$.  The disjunction construction
has output $o_C=\bigvee_{a\in A_C}a$, and the canonical target assignment sets
$q_C=o_C$; hence $\operatorname{Imp}(b_C,o_C;q_C)$ is feasible by
\cref{lem:imp-gadget}.

For a negative clause $C=(A_C\to\bot)$, satisfaction by $M$ gives
$o_C=\bigvee_{a\in A_C}a=1$.
Since the canonical target assignment also has $r=1$, the equality hyperedge
$\{o_C,r\}$ is feasible.
The root construction is feasible because $r=\rho=1$.
Thus all hyperedges of $\cC(\Gamma)$ satisfy the stopping-set
condition.
The resulting stopping set is nonempty because it contains $r$.
\end{proof}

\begin{lemma}
\label{lem:compiler-sound}
Let $T$ be a nonempty stopping set of $\cC(\Gamma)$ and put
$S=T\cap X$.  Then $X\setminus S$ is a model of $\Gamma$.
\end{lemma}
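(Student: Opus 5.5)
The plan is to show first that the root value must be one, and then to read off each clause condition from the local lemmas. Let $\alpha$ be the indicator of $T$ on $U_\Gamma$. Restricted to the hyperedges of each local construction, $\alpha$ is feasible for that construction, so \cref{lem:eq-gadget,lem:imp-gadget,lem:or-tree,lem:root} all apply to the corresponding restrictions.

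\emph{Step 1: $r=1$.} Suppose $r=0$. By \cref{lem:root}, applied to $W=X$, we get $\rho=0$ and every source element is zero.

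By \cref{lem:or-tree}, each disjunction construction has a unique feasible extension of its inputs. This extension is the canonical evaluation, so all intermediate outputs and private elements equal prefix disjunctions of the inputs, hence are zero. In particular $o_C=0$ for every clause.

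For each definite clause, \cref{lem:imp-gadget} gives $q_C=o_C=0$. Every element of $U_\Gamma$ is a source element, an auxiliary element of some disjunction construction, some $q_C$, or one of $r,\rho$. Hence $T=\varnothing$, contradicting nonemptiness. Therefore $r=\rho=1$.

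\emph{Step 2: the clause conditions.} For each clause $C$, \cref{lem:or-tree} gives $o_C=\bigvee_{a\in A_C}\one[a\in S]$. The case $A_C=\varnothing$ is covered, with the singleton edge forcing $o_C=0$. If $|A_C|=1$, then $o_C$ is the source element itself, which is consistent.

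For a definite clause, \cref{lem:imp-gadget} gives $b_C\le o_C$. In false-variable coordinates this says: if $b_C$ is false in $M=X\setminus S$, then some premise variable is false. So $M$ satisfies $A_C\to b_C$.

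For a negative clause, the equality edge $\{o_C,r\}$ together with $r=1$ forces $o_C=1$. Hence some premise variable lies in $S$, and $M$ satisfies $A_C\to\bot$. If $A_C=\varnothing$, the singleton edge forces $o_C=0$, so no nonempty stopping set exists, and the statement holds vacuously.

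Since every clause is satisfied, $X\setminus S\models\Gamma$.

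\emph{Main obstacle.} The delicate point is Step 1: confirming that the auxiliary elements exhaust $U_\Gamma$ and are all forced to zero. This relies on the disjunction-chain outputs being determined by their inputs, via the uniqueness in \cref{lem:or-tree}. A related subtlety is that the output of a $k=1$ construction may coincide with a source element. This is harmless, because that coordinate is already zero by the root construction.
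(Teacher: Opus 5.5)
Your proof is correct and follows the paper's argument essentially step for step: you rule out $r=0$ by combining \cref{lem:root} with the unique-extension properties of \cref{lem:or-tree,lem:imp-gadget} to force $T=\varnothing$. You then read off each clause's false-variable condition from the disjunction output, the implication construction, and the equality edge $\{o_C,r\}$; your remarks on empty and unary premises are accurate and just make explicit cases the paper leaves implicit.
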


\begin{proof}
Suppose first that $r=0$.
By \cref{lem:root}, applied with $W=X$, one has $\rho=0$ and
$T\cap X=\varnothing$.
The unique extension in \cref{lem:or-tree} then forces every element
introduced by a finite disjunction construction to be zero.
For each definite clause, \cref{lem:imp-gadget} further forces
$q_C=o_C=0$.
These are all the elements of $U_\Gamma$, so $T=\varnothing$, contrary
to the assumption.
Hence $r=1$, and the hyperedge $\{r,\rho\}$ gives $\rho=1$.

The hyperedges of each finite disjunction are satisfied by $T$, so
\cref{lem:or-tree} gives
$o_C=\bigvee_{a\in A_C}\mathbf 1[a\in~S]$ for every clause $C$.
For a definite clause $C=(A_C\to b_C)$,
\cref{lem:imp-gadget} yields $b_C\le o_C$, which is exactly the
false-variable condition for that clause.  For a negative clause, the
edge $\{o_C,r\}$ and $r=1$ force $o_C=1$, which is the corresponding
false-variable condition.  Therefore $X\setminus S$ satisfies every
clause of $\Gamma$.
\end{proof}

\begin{lemma}
\label{lem:compiler-unique}
For every $S\subseteq X$, at most one nonempty stopping set $T$ of
$\cC(\Gamma)$ satisfies $T\cap X=S$.  Such a stopping set exists
exactly when $X\setminus S\models\Gamma$, in which case it is
$\eta_\Gamma(S)$.  Moreover, the empty set is the only stopping set
that is not represented by a source model.
\end{lemma}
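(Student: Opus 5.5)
The plan is to combine \cref{lem:compiler-sound} and \cref{lem:compiler-complete} with a uniqueness argument that traces how the source coordinates determine every other coordinate. Fix $S\subseteq X$ and let $T$ be any nonempty stopping set of $\cC(\Gamma)$ with $T\cap X=S$. We show $T=\eta_\Gamma(S)$ by checking the auxiliary coordinates group by group.

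First, $r=\rho=1$: the argument in the proof of \cref{lem:compiler-sound} already shows that $r=0$ forces $T=\varnothing$ via \cref{lem:root}, \cref{lem:or-tree}, and \cref{lem:imp-gadget}. Second, for each clause $C$, the hyperedges of its disjunction construction are satisfied by $T$, and they involve only the premise source elements and the private elements of that construction. \Cref{lem:or-tree} then says that the restriction of $T$ to those private elements is the unique feasible extension of $S\cap A_C$. This agrees with the canonical evaluation in $\eta_\Gamma(S)$, and in particular gives $o_C=\bigvee_{a\in A_C}\one[a\in S]$. For $k=0$, the singleton hyperedge forces $o_C=0$ in both $T$ and $\eta_\Gamma(S)$. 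Third, for each definite clause, the hyperedge $\{o_C,q_C\}$ inside $\operatorname{Imp}(b_C,o_C;q_C)$ forces $q_C=o_C$, as in \cref{lem:imp-gadget}. Since every element of $U_\Gamma$ is a source element, $r$, $\rho$, an element of some disjunction construction, or some $q_C$, this gives $T=\eta_\Gamma(S)$, which proves the ``at most one'' claim.

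The step needing the most care is this bookkeeping. Distinct clause constructions use disjoint private elements, so each local uniqueness statement applies independently. The only shared elements are source elements and $r$, whose values are already fixed. With $k=1$, $o_C$ is a source element itself, so it is fixed by $S$ and nothing new needs to be determined.

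For existence, suppose $X\setminus S\models\Gamma$. Then \cref{lem:compiler-complete} shows that $\eta_\Gamma(S)$ is a nonempty stopping set, and its trace on $X$ is $S$ by definition. Conversely, if some nonempty $T$ has $T\cap X=S$, then \cref{lem:compiler-sound} gives $X\setminus S\models\Gamma$. Finally, every stopping set $T$ is either empty or nonempty. If it is nonempty, it equals $\eta_\Gamma(T\cap X)$ with $X\setminus(T\cap X)$ a model, so it is represented by a source model. The empty set is a stopping set, because every hyperedge meets it in zero elements. It is not of the form $\eta_\Gamma(S)$, since each such assignment contains $r$. This shows the empty set is the only stopping set not represented by a source model.
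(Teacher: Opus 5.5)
Your proof is correct and takes the same approach as the paper. You get $r=\rho=1$ from the $r=0$ case in the proof of \cref{lem:compiler-sound}, fix the disjunction coordinates by \cref{lem:or-tree} and each $q_C$ by the equality edge of $\operatorname{Imp}$, and take existence and its converse from \cref{lem:compiler-complete,lem:compiler-sound}; your extra bookkeeping (disjoint auxiliary elements, the cases $k=0,1$) and your observation that $\varnothing$ is unrepresented because every $\eta_\Gamma(S)$ contains $r$ only elaborate that same argument.
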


\begin{proof}
Let $T$ be a nonempty stopping set with $T\cap X=S$.
By the first part of the proof of \cref{lem:compiler-sound},
$r=\rho=1$.
Once the source coordinates are fixed, \cref{lem:or-tree} uniquely
determines every output and private element in each finite
disjunction.
Then \cref{lem:imp-gadget} uniquely determines $q_C=o_C$ for every
definite clause.
These are all the target elements, so $T$ is uniquely determined and
equals $\eta_\Gamma(S)$.

Existence for source models follows from
\cref{lem:compiler-complete}, while \cref{lem:compiler-sound} excludes
an extension of any nonmodel.  The empty set is always a stopping set.
The proof of \cref{lem:compiler-sound} also shows that every stopping
set with $r=0$ is empty.
\end{proof}

\begin{lemma}
\label{lem:compiler-order}
For all $S,S'\in\False(\Gamma)$, one has
$S\subseteq S'$ if and only if
$\eta_\Gamma(S)\subseteq\eta_\Gamma(S')$.  Consequently,
\[
  \eta_\Gamma:\False(\Gamma)
  \longrightarrow
  \Stop(\cC(\Gamma))\setminus\{\varnothing\}
\]
is an inclusion-order isomorphism whose inverse is restriction to
$X$.
\end{lemma}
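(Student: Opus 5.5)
The plan is to prove both directions of the order equivalence from the monotonicity of the canonical assignment, and then obtain the isomorphism from \cref{lem:compiler-unique}.

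\emph{Reverse direction.} Suppose $\eta_\Gamma(S)\subseteq\eta_\Gamma(S')$. By definition of $\eta_\Gamma$, the selected source elements are exactly those of $S$, so $\eta_\Gamma(S)\cap X=S$, and likewise for $S'$. Intersecting both sides with $X$ gives $S\subseteq S'$.

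\emph{Forward direction.} Fix $S\subseteq S'$ and compare $\eta_\Gamma(S)$ and $\eta_\Gamma(S')$ coordinate by coordinate.
\begin{enumerate}
  \item On $X$ the inclusion is the hypothesis.
  \item \cref{lem:or-tree} states that every auxiliary value of a finite disjunction (intermediate outputs $o_i$, private $q_i$, the outputs $o_C$, and a singleton-forced $o_A=0$) is a monotone Boolean function of the source bits. These values therefore cannot decrease when passing from $S$ to $S'$.
  \item For each definite clause, $q_C=o_C$, so $q_C$ is monotone as well.
  \item The root elements satisfy $r=\rho=1$ in both assignments.
\end{enumerate}
Since these coordinates exhaust $U_\Gamma$, we get $\eta_\Gamma(S)\subseteq\eta_\Gamma(S')$. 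This direction holds for all $S,S'\subseteq X$, not only for members of $\False(\Gamma)$.

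\emph{The isomorphism.} \cref{lem:compiler-complete} shows that $\eta_\Gamma$ maps $\False(\Gamma)$ into $\Stop(\cC(\Gamma))\setminus\{\varnothing\}$. For surjectivity, take a nonempty stopping set $T$ and put $S=T\cap X$. By \cref{lem:compiler-sound}, $S\in\False(\Gamma)$, and by the uniqueness part of \cref{lem:compiler-unique}, $T=\eta_\Gamma(S)$. Injectivity and the description of the inverse both follow from $\eta_\Gamma(S)\cap X=S$: restriction to $X$ is a left inverse, and by surjectivity it is a two-sided inverse. An order-preserving and order-reflecting bijection is an order isomorphism.

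The step needing the most care is the monotonicity claim. It must cover every auxiliary coordinate, including the degenerate cases $k=0$ (constant zero) and $k=1$ (where $o_C$ is itself a source element). I would verify it directly from the explicit prefix-disjunction formula in \cref{lem:or-tree} rather than treat it as obvious.
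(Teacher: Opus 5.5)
Your proof is correct and follows the paper's own argument essentially step for step. Intersecting with $X$ gives order reflection, and the monotonicity from \cref{lem:or-tree}, together with $q_C=o_C$ and $r=\rho=1$, gives order preservation; surjectivity comes from \cref{lem:compiler-sound,lem:compiler-unique}, while your explicit use of \cref{lem:compiler-complete} for well-definedness and your check of the cases $k=0,1$ are harmless additions the paper leaves implicit.
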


\begin{proof}
If $\eta_\Gamma(S)\subseteq\eta_\Gamma(S')$, intersecting both sides
with $X$ gives $S\subseteq S'$.  Conversely, suppose that
$S\subseteq S'$.  The source coordinates are then ordered
coordinatewise.  By \cref{lem:or-tree}, every output and every private
coordinate in a finite disjunction is a monotone function of the
source coordinates.  Each $q_C$ equals its disjunction output, while
$r$ and $\rho$ are one in both canonical target assignments.  Hence
$\eta_\Gamma(S)\subseteq\eta_\Gamma(S')$.

Injectivity follows by restriction to $X$, and surjectivity onto the
nonempty stopping sets follows from
\cref{lem:compiler-sound,lem:compiler-unique}.
\end{proof}

\begin{lemma}
\label{lem:compiler-size}
The hypergraph $\cC(\Gamma)$ is normalized, has rank at most three,
and can be constructed in $O(\|\Gamma\|_{\mathrm{inc}})$ time.
Moreover,
$L(\cC(\Gamma))=O(\|\Gamma\|_{\mathrm{inc}})$.
\end{lemma}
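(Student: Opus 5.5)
The plan is to walk through the five construction steps and, for each, count the new elements, the new hyperedges and the incidences, charging everything to a term of $\|\Gamma\|_{\mathrm{inc}}$. The bound on hyperedge size is read off directly. Every hyperedge produced has size one (the singleton $\{o_A\}$ when $A_C=\varnothing$), two ($\{c,q\}$, $\{y,q\}$, $\{r,\rho\}$, $\{o_C,r\}$) or three ($\{a,b,c\}$, $\{a,c,q\}$, $\{x,y,q\}$, $\{x,r,\rho\}$). Hence $\rank(\cC(\Gamma))\le3$.

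Next I will check that $\cC(\Gamma)$ is normalized. No hyperedge is empty. Within any listed hyperedge the elements are distinct: the private elements and intermediate outputs are fresh, and $r,\rho\notin X$. The one delicate case is $\operatorname{Imp}(b_C,o_C;q_C)$ when $|A_C|=1$, since then $o_C=a_1$. Because $A\to b$ requires $b\notin A$, we still get $b_C\ne o_C$.

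The main obstacle is duplicate hyperedges. Any hyperedge containing a fresh private element occurs only in its own occurrence of a construction. The remaining hyperedges are:
\begin{itemize}
\item the edges $\{a,b,c\}$, whose output $c$ is fresh;
\item the root edges, which are indexed by distinct $x$;
\item the edges $\{o_C,r\}$.
\end{itemize}
Two negative clauses with the same singleton premise $\{a\}$ would produce the same edge $\{a,r\}$. Since the clauses are deduplicated, distinct clauses have distinct premises, so this cannot happen. To be safe, I will also state that any residual duplicates can be removed in linear time by the normalization procedure. Removing them changes neither $\Stop$ nor the bounds, so the lemma is unaffected either way.

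For the size bound, I will count per clause $C$ with $k=|A_C|$. The OR chain introduces $2(k-1)$ elements and $3(k-1)$ hyperedges with $8(k-1)$ incidences when $k\ge2$. It introduces $O(1)$ of each when $k\le1$. The implication adds one element and two hyperedges, which costs $O(1)$. The negative equality edge costs $O(1)$. Each clause therefore contributes $O(|A_C|+1)$. The root contributes $2$ elements and $|X|+1$ hyperedges with $3|X|+2$ incidences. Summing gives $L(\cC(\Gamma))=O(|X|+|\Gamma|+\sum_C|A_C|)=O(\|\Gamma\|_{\mathrm{inc}})$.

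Construction time will be handled in one pass over the clause list. Fresh elements receive consecutive integer identifiers, and each hyperedge is emitted as a constant-size incidence list. The total time is therefore proportional to the output length, which is $O(\|\Gamma\|_{\mathrm{inc}})$.
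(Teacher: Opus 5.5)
Your proposal is correct and follows essentially the same route as the paper: you count elements, hyperedges and incidences construction by construction, read the rank bound off the hyperedge sizes, and rule out duplicate hyperedges using the freshness of private and output elements together with the deduplication of $\Gamma$ for singleton-premise negative clauses. Your explicit check that $b_C\neq o_C$ when $|A_C|=1$ is a detail the paper leaves implicit, and your fallback about removing residual duplicates is unnecessary, since your own argument already shows that none arise.
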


\begin{proof}
Let $n:=|X|$.
Let $e$ be the number of clauses with empty premise, let $d$ and $g$
be the numbers of definite and negative clauses, respectively, and
put
$t:=\sum_{C\in\Gamma}\max\{|A_C|-1,0\}$.

The finite disjunction constructions introduce $e+2t$ elements,
$e+3t$ hyperedges, and $e+8t$ incidences.
The definite clauses introduce $d$ elements, $2d$ hyperedges, and
$5d$ incidences.
The negative clauses introduce $g$ hyperedges and $2g$ incidences.
Finally, the root construction introduces two elements,
$n+1$ hyperedges, and $3n+2$ incidences.

Consequently,
\(
  |U_\Gamma|=n+e+2t+d+2,
\)
\(
  |\cC(\Gamma)|=n+e+3t+2d+g+1,
\)
and the number of incidences is
$3n+e+8t+5d+2g+2$.
Thus
\(
  L(\cC(\Gamma))
  =
  5n+3e+13t+8d+3g+5
  =
  O(\|\Gamma\|_{\mathrm{inc}}).
\)

Every hyperedge has size one, two, or three.
All output and private elements belonging to different occurrences
are distinct.
When $|A_C|=1$, the output $o_C$ is the unique source element in
$A_C$, but the clause-specific private element $q_C$ distinguishes
the hyperedges introduced for definite clauses, while normalization
of $\Gamma$ distinguishes the equality hyperedges introduced for
negative clauses.
Together with the normalization of $\Gamma$, this also implies that
the constructed hyperedges are pairwise distinct, so
$\cC(\Gamma)$ is already normalized.
The construction is produced by a single pass through the clauses and
their premises.
\end{proof}

For a source model $M\models\Gamma$, define
\(
  \widehat\eta_\Gamma(M)
  :=
  U_\Gamma\setminus\eta_\Gamma(X\setminus M).
\)
By
\cref{prop:complement-duality,lem:compiler-complete},
this is a proper model of $\Phi_{\cC(\Gamma)}$.
Thus the construction has two equivalent forms.
The map $\eta_\Gamma$ represents the false-variable sets of source
models as nonempty stopping sets, while complementing both the source
and target sets gives the map $\widehat\eta_\Gamma$ between model
families.
These correspondences are summarized by the commutative diagram
\[
\begin{tikzcd}[
  column sep=huge,
  row sep=large
]
\Mod(\Gamma)
  \arrow[r,"X\setminus(\,\cdot\,)"]
  \arrow[d,"\widehat\eta_\Gamma"']
&
\False(\Gamma)
  \arrow[d,"\eta_\Gamma"]
\\
\Mod(\Phi_{\cC(\Gamma)})\setminus\{U_\Gamma\}
  \arrow[r,"U_\Gamma\setminus(\,\cdot\,)"']
&
\Stop(\cC(\Gamma))\setminus\{\varnothing\}.
\end{tikzcd}
\]
The horizontal maps reverse inclusion, whereas the vertical maps
preserve it.
The preceding lemmas show that these maps represent the entire source
model poset, rather than merely preserving satisfiability, and that
all auxiliary coordinates are uniquely determined.
We can now state the main representation theorem.


\begin{theorem}
\label{thm:compiler}
For every normalized Horn CNF $\Gamma$ on $X$, one can construct in
$O(\|\Gamma\|_{\mathrm{inc}})$ time a normalized hypergraph
$\cC(\Gamma)$ on a ground set $U_\Gamma$ such that 
$\rank(\cC(\Gamma))\le3$ and
$L(\cC(\Gamma))=O(\|\Gamma\|_{\mathrm{inc}})$.

The maps
\[
  \eta_\Gamma:
  \False(\Gamma)
  \longrightarrow
  \Stop(\cC(\Gamma))\setminus\{\varnothing\}
\]
and
\[
  \widehat\eta_\Gamma:
  \Mod(\Gamma)
  \longrightarrow
  \Mod(\Phi_{\cC(\Gamma)})\setminus\{U_\Gamma\}
\]
are inclusion-order isomorphisms.
For each source model $M\models\Gamma$, the proper target model 
$\widehat\eta_\Gamma(M)$ is the unique proper model $K$ of $\Phi_{\cC(\Gamma)}$ 
satisfying $K\cap X=M$. Hence every coordinate in $U_\Gamma\setminus X$ is uniquely determined by $M$.
The inverse of $\eta_\Gamma$ is $T\mapsto T\cap X$, and the inverse of
$\widehat\eta_\Gamma$ is $K\mapsto K\cap X$.

In particular, maximal models of $\Gamma$ correspond bijectively to
the coatoms of the closure system
$\Mod(\Phi_{\cC(\Gamma)})$.
For bit-vector representations, the forward and inverse maps can be
evaluated in $O(\|\Gamma\|_{\mathrm{inc}})$ time per output.
\end{theorem}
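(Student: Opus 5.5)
The theorem is essentially an assembly of the lemmas of \cref{sec:exact-representation}, so I would prove it by collecting them in order and adding the complement-duality step. First, the construction-time, normalization, rank and size claims are exactly \cref{lem:compiler-size}, so nothing further is needed there. Next, \cref{lem:compiler-order} already states that $\eta_\Gamma$ is an inclusion-order isomorphism from $\False(\Gamma)$ onto $\Stop(\cC(\Gamma))\setminus\{\varnothing\}$ with inverse $T\mapsto T\cap X$. Surjectivity there rests on \cref{lem:compiler-sound,lem:compiler-unique}, and well-definedness into nonempty stopping sets rests on \cref{lem:compiler-complete}.

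For $\widehat\eta_\Gamma$, I would write it as a composite:
\[
  \widehat\eta_\Gamma
  =
  \bigl(U_\Gamma\setminus(\cdot)\bigr)\circ\eta_\Gamma\circ\bigl(X\setminus(\cdot)\bigr).
\]
The map $X\setminus(\cdot)$ is an order-reversing bijection $\Mod(\Gamma)\to\False(\Gamma)$ by definition of $\False(\Gamma)$. The map $U_\Gamma\setminus(\cdot)$ is an order-reversing bijection from $\Stop(\cC(\Gamma))\setminus\{\varnothing\}$ onto $\Mod(\Phi_{\cC(\Gamma)})\setminus\{U_\Gamma\}$ by \cref{prop:complement-duality}. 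Composing two order reversals with an order isomorphism gives an order isomorphism.

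For the inverse, let $K$ be a proper target model. Then
\[
  K\cap X = X\setminus\bigl((U_\Gamma\setminus K)\cap X\bigr),
\]
and $(U_\Gamma\setminus K)\cap X$ is the inverse of $\eta_\Gamma$ applied to the stopping set $U_\Gamma\setminus K$. Hence $K\mapsto K\cap X$ inverts $\widehat\eta_\Gamma$.

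Uniqueness of the proper model $K$ with $K\cap X=M$ translates to uniqueness of a nonempty stopping set $T$ with $T\cap X=X\setminus M$. This is the first sentence of \cref{lem:compiler-unique}, and it gives that every auxiliary coordinate is determined by $M$.

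Maximal models correspond to coatoms because an order isomorphism maps maximal elements to maximal elements. The maximal elements of $\Mod(\Phi_{\cC(\Gamma)})\setminus\{U_\Gamma\}$ are by definition the members of $\cM(\Phi_{\cC(\Gamma)})$.

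For the evaluation-time claim, the forward map computes $\eta_\Gamma(X\setminus M)$ by the one-pass canonical evaluation: each OR construction, $q_C$, and $r,\rho$ is set in constant time per incidence. It then complements a $|U_\Gamma|$-bit vector, and $|U_\Gamma|=O(\|\Gamma\|_{\mathrm{inc}})$ by the count in \cref{lem:compiler-size}. The inverse map is a restriction to $X$, which takes $O(|U_\Gamma|)$ time.

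There is no real obstacle, since the substantive work sits in the lemmas. The only step needing care is the bookkeeping between the two complement conventions, source complement in $X$ and target complement in $U_\Gamma$. One must check that the excluded elements match: $\varnothing$ on the stopping-set side corresponds to $U_\Gamma$ on the model side. One must also check that restriction to $X$ commutes correctly with complementation, which follows from $X\subseteq U_\Gamma$.
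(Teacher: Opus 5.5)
Your proposal is correct and follows the paper's proof. Both arguments assemble \cref{lem:compiler-complete,lem:compiler-sound,lem:compiler-unique,lem:compiler-order,lem:compiler-size}, obtain $\widehat\eta_\Gamma$ by composing $\eta_\Gamma$ with the two complementations, and read off the inverse, uniqueness and timing claims; the only cosmetic difference is that you identify the coatoms directly as maximal elements of $\Mod(\Phi_{\cC(\Gamma)})\setminus\{U_\Gamma\}$, whereas the paper passes through minimal nonempty stopping sets and \cref{prop:complement-duality}, and the two routes are equivalent.
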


\begin{proof}
The stopping-set correspondence, uniqueness of the auxiliary
coordinates, and preservation of inclusion follow from
\cref{lem:compiler-complete,lem:compiler-sound,lem:compiler-unique,lem:compiler-order}.  The rank, incidence length,
and construction time follow from \cref{lem:compiler-size}.

Complementation reverses inclusion on both the source and target
sides.  Therefore the map
$M\mapsto U_\Gamma\setminus\eta_\Gamma(X\setminus M)$ is an
inclusion-order isomorphism from $\Mod(\Gamma)$ onto the proper models
of $\Phi_{\cC(\Gamma)}$.  If $K=\widehat\eta_\Gamma(M)$, then
$K\cap X=M$, so restriction to $X$ is its inverse.

An inclusion-maximal source model has an inclusion-minimal
false-variable set.  The first order isomorphism sends such sets
exactly to the inclusion-minimal nonempty stopping sets of
$\cC(\Gamma)$.  By \cref{prop:complement-duality}, their complements
are precisely the coatoms of $\Mod(\Phi_{\cC(\Gamma)})$.  Because
every nonempty stopping set lies in the image of $\eta_\Gamma$, no
additional coatom can arise from auxiliary coordinates.

The forward maps are evaluated by a single pass through the
construction, and the inverse maps are restrictions to $X$.  The
per-output time bound follows from the linear size of
$\cC(\Gamma)$.
\end{proof}

\begin{corollary}
\label{cor:nonempty-premises}
If every clause of $\Gamma$ has a nonempty premise, then every
hyperedge of $\cC(\Gamma)$ has size two or three.
\end{corollary}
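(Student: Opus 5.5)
The plan is to inspect the construction of $\cC(\Gamma)$ step by step and record the size of every hyperedge it introduces. The aim is to show that the only source of a hyperedge of size one is the $k=0$ case of the finite disjunction construction. That case occurs exactly when some clause has an empty premise.

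The contributions, in the order of the construction, are as follows.
\begin{enumerate}
  \item The finite disjunction construction for a premise $A_C$ with $|A_C|=k$.
  \begin{itemize}
    \item For $k=0$ it adds the singleton $\{o_A\}$; this case is excluded by hypothesis.
    \item For $k=1$ it adds no hyperedge.
    \item For $k\ge2$ it adds only copies of $\operatorname{OR}(\cdot,\cdot;\cdot,\cdot)$, whose hyperedges $\{c,q\}$, $\{a,b,c\}$, $\{a,c,q\}$ have sizes two, three and three.
  \end{itemize}
  \item For each definite clause, $\operatorname{Imp}(b_C,o_C;q_C)$ adds $\{o_C,q_C\}$ and $\{b_C,o_C,q_C\}$.
  \item The root construction adds $\{r,\rho\}$ and the edges $\{x,r,\rho\}$.
  \item Each negative clause adds $\{o_C,r\}$.
\end{enumerate}

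The only point needing care is that these displayed sets really have the stated cardinalities, i.e.\ that no listed elements coincide. Private elements $q$, the intermediate outputs $o_i$, and $r,\rho$ are fresh and distinct from everything else. The output $o_C$ is either fresh (for $k\ge2$) or equals the unique source element of $A_C$ (for $k=1$); in both cases it differs from $q_C$, $r$ and $\rho$.

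The one possible collision is $b_C=o_C$ in the Imp gadget, when $A_C=\{a\}$ with $a=b_C$. This is excluded because a definite clause $A\to b$ requires $b\in X\setminus A$. Similarly, in the OR gadget the inputs $a_1,a_2$ are distinct elements of $A_C$, and $o_{i-1}\ne a_i$ since $o_{i-1}$ is fresh, so $\{a,b,c\}$ has three elements. I expect this distinctness bookkeeping, and in particular the $b_C\notin A_C$ point, to be the only step requiring attention. Combining these checks, every hyperedge has size two or three.
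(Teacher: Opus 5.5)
Your proposal is correct and follows essentially the same route as the paper: inspect each stage of the construction, record the sizes of the hyperedges it adds, and observe that the only singleton is the $\{o_A\}$ edge for an empty premise. Your additional distinctness bookkeeping, in particular that $b_C\notin A_C$ rules out $b_C=o_C$ when $|A_C|=1$, makes explicit a point the paper's proof leaves implicit.
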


\begin{proof}
A unary premise uses its input directly and introduces no hyperedge.
A premise of size at least two uses only the two- and three-element
hyperedges of the binary disjunction construction.
Every definite clause uses one two-element and one three-element
hyperedge, while every negative clause uses one two-element equality
hyperedge.
The root construction uses the two-element hyperedge $\{r,\rho\}$ and
the three-element hyperedges $\{x,r,\rho\}$ for $x\in X$.
The only singleton hyperedge in the general construction is the one
that fixes the output of an empty premise to zero.
\end{proof}

If $\Gamma$ contains the empty negative clause
$\varnothing\to\bot$, the output of its empty disjunction is fixed to
zero and equated with $r$.
A nonempty stopping set would have $r=1$ by \cref{lem:root}, so no such
stopping set exists.
This agrees with the fact that a Horn CNF containing
$\varnothing\to\bot$ has no model.
The construction also applies without change when $\Gamma$ is empty
or when $X=\varnothing$.

From the viewpoint of Boolean constraint languages, the local
constraints use equality, the ternary symmetric relation
$S_{023}=\{000,011,101,110,111\}$, and, for an empty premise, the
unary relation $\{0\}$ represented by a singleton hyperedge.
If the root is included among the visible coordinates, the
construction gives a unique-witness primitive-positive definition of
the augmented relation consisting of the all-zero tuple together with
the tuples $(\chi_S,1)$ for $S\in\False(\Gamma)$.
This is an instance of unique primitive-positive definability
\citep{LagerkvistNordh2019}.
When every clause premise is nonempty, the unary relation is not
needed.

The representation theorem also has the following consequence for
finite closure systems and lattices.

\begin{corollary}
\label{cor:rank3-universality}
Let $\mathcal C$ be a finite closure system on $X$, represented by a
normalized implicational basis $\Gamma$ written in singleton-conclusion form, so that
$\mathcal C=\Mod(\Gamma)$.
One can construct a normalized hypergraph $\cH$ of rank at most three
such that
\(
  \mathcal C
  \cong
  \Mod(\Phi_{\cH})\setminus\{V(\cH)\}
\)
under inclusion.
Moreover,
$L(\cH)=O(\|\Gamma\|_{\mathrm{inc}})$.
Consequently, every finite lattice is isomorphic to the proper-model
poset of a hypergraph Horn function of rank at most three.
\end{corollary}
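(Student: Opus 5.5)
The plan is to apply \cref{thm:compiler} directly to the given basis $\Gamma$ and put $\cH:=\cC(\Gamma)$ on $V(\cH)=U_\Gamma$.

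Since $\Gamma$ is written in singleton-conclusion form, every clause is a definite Horn clause $A\to b$ (possibly with empty premise). Hence $\Gamma$ is a normalized Horn CNF in the sense of \cref{sec:horn-hypergraph-prelim}, and $\mathcal C=\Mod(\Gamma)$. The theorem then yields the following:
\begin{itemize}
\item $\rank(\cH)\le3$ and $L(\cH)=O(\|\Gamma\|_{\mathrm{inc}})$;
\item normalization of $\cH$, by \cref{lem:compiler-size};
\item an inclusion-order isomorphism $\widehat\eta_\Gamma\colon\Mod(\Gamma)\to\Mod(\Phi_{\cH})\setminus\{V(\cH)\}$.
\end{itemize}
Composing with the identity $\mathcal C=\Mod(\Gamma)$ gives the asserted isomorphism. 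No negative clauses occur, but nothing in the construction requires them. The empty basis is also covered, by the remark after \cref{cor:nonempty-premises}.

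For the lattice statement, take a finite lattice $L$ and let $X=L$. Consider the family $\mathcal C$ of principal ideals ${\downarrow}x$ for $x\in L$. This family is intersection-closed, because ${\downarrow}x\cap{\downarrow}y={\downarrow}(x\wedge y)$, and it contains $X={\downarrow}\hat 1$. It is therefore a closure system, and $x\mapsto{\downarrow}x$ is an order isomorphism from $L$ onto $(\mathcal C,\subseteq)$.

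It remains to exhibit an implicational basis in singleton-conclusion form. I would take all implications $A\to b$ with $b\le\bigvee A$ in $L$. This set is finite because $X$ is finite. The empty premise is allowed and yields $\varnothing\to b$ for $b\le\hat 0$, which makes $\hat 0$ belong to every model.

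\textbf{Main check.} One must verify that the models of this basis are exactly the principal ideals; this is the only real work.
\begin{itemize}
\item A principal ideal ${\downarrow}x$ satisfies each clause: if $A\subseteq{\downarrow}x$, then $\bigvee A\le x$, hence $b\le x$.
\item Conversely, let $M$ be a model. Put $x=\bigvee M$, where the empty join is $\hat 0$. The clauses $M\to b$ for all $b\le x$ force ${\downarrow}x\subseteq M$, and $M\subseteq{\downarrow}x$ holds trivially.
\end{itemize}

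After removing duplicate clauses, the basis is normalized. Applying the first part then gives a rank-at-most-three $\cH$ with $L\cong\mathcal C\cong\Mod(\Phi_{\cH})\setminus\{V(\cH)\}$. The size bound is not needed for the universality claim, so using this possibly large basis is harmless.
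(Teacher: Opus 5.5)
Your proposal is correct and follows the paper's proof. The first part is a direct application of \cref{thm:compiler} with $\cH=\cC(\Gamma)$, and the lattice part passes through the closure system of principal ideals $\{{\downarrow}x : x\in L\}$.

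The only difference is that you exhibit an explicit singleton-conclusion basis, namely all $A\to b$ with $b\le\bigvee A$, and check that its models are exactly the principal ideals; the paper simply applies the first statement to some implicational basis of $\mathcal C_L$. To match the convention $b\in X\setminus A$, you should drop the trivially satisfied clauses with $b\in A$.
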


\begin{proof}
The first statement follows directly from \cref{thm:compiler}.

Let $L$ be a finite lattice and put
$\mathcal C_L:=\{\mathord{\downarrow}x:x\in L\}$.
Since
$\mathord{\downarrow}x\cap\mathord{\downarrow}y
 =\mathord{\downarrow}(x\wedge y)$
and $\mathord{\downarrow}1=L$, the family $\mathcal C_L$ is a closure
system on the ground set $L$.
The map $x\mapsto\mathord{\downarrow}x$ is an inclusion-order
isomorphism from $L$ onto $\mathcal C_L$.
Applying the first statement to an implicational basis of
$\mathcal C_L$ proves the claim.
\end{proof}

The corollary concerns the inclusion order of the model families.
The bound $L(\cH)=O(\|\Gamma\|_{\mathrm{inc}})$ is measured relative
to the incidence size of the supplied implicational basis.
The final statement for an abstract finite lattice is therefore
existential unless such a basis is included as part of its
representation.
The construction introduces auxiliary elements and one additional top
model; in general, $\Gamma$ and $\Phi_{\cH}$ do not represent the same
Boolean function.

When $\Gamma$ is definite, $X$ is the unique maximum model of $\Gamma$. 
By \cref{thm:compiler}, $\widehat{\eta}_\Gamma(X)$ is therefore 
the unique maximum proper model of $\Phi_{\cC(\Gamma)}$, 
and hence its unique coatom. Thus the lattice representation of \cref{cor:rank3-universality} 
does not by itself produce difficult coatom families. 
The hard family used later in \cref{sec:hardness-transfer} contains a
negative clause that excludes the all-true source assignment.
This removes the common maximum, allowing the remaining clauses to
produce the incomparable maximal models used in the hardness
reduction.
The root construction then places the universal target model
$U_\Gamma$ strictly above the represented proper-model poset.

\section{Complexity consequences}
\label{sec:consequences}

This section applies the representation theorem to the computational
problems introduced in \cref{sec:enumeration-prelim}.  We first record
which elementary coatom tasks are polynomial-time solvable and contrast
them with minimum-complement optimization.  We then recall the maximal
Horn model construction of \citet{KavvadiasSideriStavropoulos2000} and
transfer its hardness through the order-preserving correspondence of
\cref{thm:compiler}.

\subsection{Basic computational tasks}
\label{sec:basic-tasks}

We first consider three basic tasks associated with the coatom family.
A proposed coatom can be verified in polynomial time, and one 
arbitrary coatom can be found in polynomial time.
By contrast, deciding whether there is a coatom whose complement has 
size at most a prescribed bound is $\NP$-complete.
We then derive an immediate consequence for enumeration in nondecreasing order of complement size.

\begin{lemma}
\label{lem:coatom-verification}
Let \(\cH\) be a hypergraph on \(V\).  A set \(M\subseteq V\) is a
coatom of the closure system \(\Mod(\Phi_{\cH})\) if and only if
\(\Cl_{\cH}(M)=M\ne V\) and
\(\Cl_{\cH}(M\cup\{v\})=V\) for every \(v\in V\setminus M\).
Consequently, a bit-vector candidate can be verified using at most
\(|V|+1\) closure computations.
\end{lemma}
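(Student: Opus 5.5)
The plan is to work directly from the definition of a coatom as a maximal proper closed set, using that $\Mod(\Phi_{\cH})$ is a closure system. Every set $Y$ then has a least closed superset $\Cl_{\cH}(Y)$, and $Y$ is closed exactly when $\Cl_{\cH}(Y)=Y$. The equivalence splits into two directions, and the complexity claim follows by counting closure calls.

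\emph{Forward direction.} Let $M$ be a coatom. Then $M$ is closed, so $\Cl_{\cH}(M)=M$, and it is proper, so $M\ne V$. Take any $v\in V\setminus M$. The set $\Cl_{\cH}(M\cup\{v\})$ is closed and contains $M\cup\{v\}$, so it strictly contains $M$. By maximality of $M$ among proper closed sets, this closed set cannot be proper, so it equals $V$.

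\emph{Reverse direction.} Suppose $\Cl_{\cH}(M)=M\ne V$ and $\Cl_{\cH}(M\cup\{v\})=V$ for every $v\notin M$. Then $M$ is a proper closed set. Let $K$ be a closed set with $M\subsetneq K$, and pick $v\in K\setminus M$. Since $K$ is closed and contains $M\cup\{v\}$, monotonicity of closure gives $V=\Cl_{\cH}(M\cup\{v\})\subseteq K$, so $K=V$. Hence no proper closed set strictly contains $M$, and $M$ is a coatom.

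\emph{Complexity count.} Checking the criterion needs one closure computation for $M$ and one for each of the at most $|V|$ elements outside $M$, giving at most $|V|+1$ computations. Each runs in $O(L(\cH))$ time by the forward-chaining procedure of \cref{sec:horn-hypergraph-prelim}. Comparing a result with $M$ or with $V$ is a linear bit-vector comparison.

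No step is a real obstacle. The only point needing care is that the closure is the least closed superset, which follows because models of the definite Horn CNF $\Phi_{\cH}$ are closed under intersection, as already noted in the preliminaries.
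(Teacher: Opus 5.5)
Your proposal is correct and follows essentially the same route as the paper. The forward direction applies maximality of $M$ to the closed set $\Cl_{\cH}(M\cup\{v\})$, the reverse direction applies monotonicity of closure to a closed set strictly above $M$ (you argue directly where the paper argues by contradiction, which is immaterial), and your count of one closure computation for $M$ plus one for each element of $V\setminus M$ gives the paper's bound of $|V|+1$.
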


\begin{proof}
If \(M\) is a coatom, then it is a proper closed set.  For any
\(v\notin M\), the closed set \(\Cl_{\cH}(M\cup\{v\})\) properly
contains \(M\), and hence must be \(V\).

Conversely, suppose that the stated conditions hold.  If a proper
closed set \(D\) satisfied \(M\subsetneq D\subsetneq V\), choose
\(v\in D\setminus M\).  Monotonicity of closure would give
\(\Cl_{\cH}(M\cup\{v\})\subseteq\Cl_{\cH}(D)=D\ne V\), contrary to
the second condition.  Thus no proper closed set properly contains
\(M\), so \(M\) is a coatom.
\end{proof}

\begin{proposition}
\label{prop:find-one}
Given a hypergraph \(\cH\) on \(V\), one can find a coatom of
\(\Mod(\Phi_{\cH})\), or determine that none exists, using at most
\(|V|+1\) closure computations.  For a normalized input, the running
time is \(O(|V|L(\cH))\), and the working space is \(O(L(\cH))\).
\end{proposition}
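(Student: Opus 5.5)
The plan is to grow a proper closed set greedily until it becomes maximal, using the characterization in \cref{lem:coatom-verification}. First I compute $M_0:=\Cl_{\cH}(\varnothing)$. If $M_0=V$, then every closed set contains $M_0$, so $V$ is the only closed set and there are no coatoms; I report that none exists. Otherwise $M_0$ is a proper closed set. I then fix an arbitrary ordering $v_1,\ldots,v_n$ of $V$ and scan it once, maintaining a current proper closed set $M$ (initially $M_0$). At step $i$, if $v_i\notin M$, I compute $D:=\Cl_{\cH}(M\cup\{v_i\})$. If $D\ne V$, I replace $M$ by $D$; otherwise $M$ stays as it is. At the end I output $M$.

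Correctness rests on the invariant that $M$ is always a proper closed set; this holds because $M$ is replaced only by a closed set $D\ne V$. For maximality, I need $\Cl_{\cH}(M\cup\{v\})=V$ for the final $M$ and every $v\notin M$. Take such a $v=v_i$. At step $i$ the current set $M_i$ satisfied $M_i\subseteq M$ and $v_i\notin M_i$, since $M$ only grows and $v_i\notin M$. If the test at step $i$ had produced $D\ne V$, then $v_i\in D\subseteq M$, which is a contradiction. Hence $\Cl_{\cH}(M_i\cup\{v_i\})=V$, and monotonicity gives $\Cl_{\cH}(M\cup\{v_i\})\supseteq\Cl_{\cH}(M_i\cup\{v_i\})=V$. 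Together with $\Cl_{\cH}(M)=M\ne V$, \cref{lem:coatom-verification} shows that $M$ is a coatom. This monotonicity step is the only point that needs care: a single pass suffices because a vertex rejected earlier remains rejected once $M$ has grown.

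For the complexity, there is one initial closure and at most one closure per vertex, so at most $|V|+1$ closure computations in total. Each takes $O(L(\cH))$ time by the forward-chaining procedure of \cref{sec:horn-hypergraph-prelim}, giving $O(|V|L(\cH))$ time overall. Each closure is computed from scratch on the incidence lists; alternatively, it can be computed incrementally starting from $M$. The working memory consists of the adjacency lists, the current bit vector $M$, a scratch bit vector for $D$, and the per-hyperedge counters, all of size $O(L(\cH))$. These are reused across iterations, so the working space is $O(L(\cH))$.
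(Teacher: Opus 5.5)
Your proposal is correct and follows the paper's proof essentially step for step: start from $\Cl_{\cH}(\varnothing)$, scan the vertices once, accept a vertex exactly when the closure stays proper, and use monotonicity of closure to show that a rejected vertex stays rejected, so that \cref{lem:coatom-verification} certifies the final set as a coatom. Your count of one initial closure plus at most one per vertex, and the resulting time and space bounds, are also the same as the paper's.
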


\begin{proof}
Compute \(C=\Cl_{\cH}(\varnothing)\).  If \(C=V\), then every closed
set contains \(C\), so \(V\) is the only closed set and no coatom
exists.

Assume that \(C\ne V\), fix an order of the vertices, and process each
vertex once.  When the current vertex \(v\) is not already in \(C\),
compute \(D=\Cl_{\cH}(C\cup\{v\})\).  If \(D\ne V\), replace \(C\)
by \(D\); otherwise reject \(v\) and leave \(C\) unchanged.  A vertex
rejected at a closed set \(C_0\) can never become admissible later:
for every later current set \(C\supseteq C_0\), monotonicity gives
\(V=\Cl_{\cH}(C_0\cup\{v\})\subseteq
\Cl_{\cH}(C\cup\{v\})\), and therefore the latter closure is also
\(V\).

At termination, every vertex outside \(C\) has been rejected at some
closed set contained in the final \(C\).  Hence
\(\Cl_{\cH}(C\cup\{v\})=V\) for every \(v\notin C\).  The set \(C\)
is proper throughout the algorithm, so \cref{lem:coatom-verification}
shows that the final \(C\) is a coatom.  There is one initial closure
computation and at most one further computation for each vertex,
which gives the stated bounds.
\end{proof}

\begin{proposition}
\label{prop:min-complement-hard}
Given a hypergraph \(\cH\) on \(V\) and an integer \(k\), deciding
whether \(\Mod(\Phi_{\cH})\) has a coatom \(M\) with
\(|V\setminus M|\le k\) is \(\NP\)-complete.  Consequently, finding a
minimum-cardinality nonempty stopping set, or equivalently a
maximum-cardinality coatom, is \(\NP\)-hard.
\end{proposition}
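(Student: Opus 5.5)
Membership in $\NP$ is immediate: a certificate is the candidate $M$ itself, and by \cref{lem:coatom-verification} it can be verified with at most $|V|+1$ closure computations, each taking $O(L(\cH))$ time. Checking $|V\setminus M|\le k$ is trivial. Since $\MinStop(\cH)$ consists exactly of the complements of coatoms, by \cref{prop:complement-duality}, a coatom with $|V\setminus M|\le k$ exists if and only if some nonempty stopping set has size at most $k$. Every nonempty stopping set contains a minimal one, so this is the same as asking whether a nonempty stopping set of size at most $k$ exists. This observation also gives the ``consequently'' clause once hardness is shown.

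For $\NP$-hardness I would reduce from a weighted minimum-model problem through \cref{thm:compiler}. Through the compiler, the question becomes: does $\Gamma$ have a model whose false set $S$ satisfies $|\eta_\Gamma(S)|\le k'$? The map $\eta_\Gamma$ adds auxiliary elements, so I need to control how many. The cleanest route avoids the Horn compiler. Instead, reduce from a known $\NP$-complete problem, namely minimum stopping distance, or equivalently minimum-weight ``no weight-one check'' sets. I would use the classical result that computing the stopping distance of a parity-check matrix is $\NP$-hard (Krishnan--Shankar / McGregor--Milenkovic). Since the paper's hypergraphs are arbitrary, that result transfers verbatim: row supports serve as hyperedges, and a stopping set is exactly \cref{def:stopping-set}.

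If a self-contained argument is preferred, I would instead reduce from Vertex Cover or Exact Cover, using padding to make the auxiliary cost of the local gadgets uniform. Alternatively, reduce from minimum-size model problems for Horn formulas with negative clauses, such as minimum hitting set. Take a hitting-set instance: a family $\mathcal F$ over $X$ with bound $k$. Let $\Gamma$ consist of the negative clauses $F\to\bot$ for $F\in\mathcal F$. Then the false sets of models are exactly the transversals. I would build a custom hypergraph rather than use $\cC(\Gamma)$. The reason is that in $\cC(\Gamma)$ the OR-tree auxiliaries are counted, and their number depends on $S$ in an uncontrolled way.

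To fix this, I would blow up each source element $x$ into $N$ copies linked by equality edges, with $N$ larger than the total number of auxiliary elements. Then $|\eta(S)|$ lies in $[N|S|,\,N|S|+N-1]$, and thresholding at $k'=Nk+N-1$ decides the hitting-set instance. Every nonempty stopping set equals $\eta$ of a transversal by \cref{lem:compiler-unique}. The blow-up preserves this, because equality edges only identify values. The main obstacle is verifying this padding argument: the copies must behave exactly like one coordinate, and the auxiliary count must be bounded by $|U_\Gamma|-|X|$. Both follow from \cref{lem:eq-gadget} and \cref{lem:compiler-size}, and the construction remains polynomial in size.
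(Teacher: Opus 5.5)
Your main argument is the same as the paper's proof. Membership in $\NP$ comes from \cref{lem:coatom-verification}. Hardness comes from applying the Krishnan--Shankar stopping-distance result directly to row supports (the paper also discards zero and duplicate rows so the instance is normalized), together with the observation that every nonempty stopping set contains a minimal one of no larger size. Your padded hitting-set alternative is also sound but not needed; the transversal embedding of \cref{thm:transversal} would even give it without padding, since every nonempty stopping set there has the form $B_{\cA}\cup T$ with $|B_{\cA}|=|X|+1$ fixed.
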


\begin{proof}
The problem belongs to \(\NP\) by
\cref{lem:coatom-verification}.

For hardness, let $H$ be a binary parity-check matrix.
Duplicate rows may be removed and zero rows may be discarded without
changing the stopping-set family.
Take the columns of the resulting matrix as the ground set and the
distinct nonempty row supports as the hyperedges.
A subset of columns is then a stopping set of the Tanner graph exactly
when no row support meets it in one coordinate. Krishnan and Shankar
proved that deciding whether such a Tanner graph has a nonempty
stopping set of size at most \(k\) is \(\NP\)-complete
\citep{KrishnanShankar2007}.

Every nonempty stopping set contains an inclusion-minimal nonempty
stopping set of no larger cardinality: repeatedly delete an element
whenever the stopping-set condition remains satisfied.  Thus a
nonempty stopping set of size at most \(k\) exists if and only if a
minimal nonempty stopping set of size at most \(k\) exists.  By
\cref{prop:complement-duality}, the latter sets are precisely the
complements of coatoms, which proves the reduction.
\end{proof}

\begin{corollary}
\label{cor:size-ordered-enum}
Unless $\mathsf P=\mathsf{NP}$, there is no polynomial-delay 
algorithm that enumerates the coatoms of 
an arbitrary hypergraph in nondecreasing order of complement size.
\end{corollary}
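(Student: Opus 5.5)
The plan is to derive \cref{cor:size-ordered-enum} from \cref{prop:min-complement-hard} by a direct simulation argument. I will assume there is an algorithm $\mathcal A$ that enumerates the coatoms of an arbitrary normalized hypergraph in nondecreasing order of complement size, with delay bounded by a polynomial $p(L(\cH))$. From $\mathcal A$ I will build a polynomial-time decision procedure for the $\NP$-complete problem of \cref{prop:min-complement-hard}, which forces $\mathsf P=\mathsf{NP}$.

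The procedure works as follows. Given $(\cH,k)$, run $\mathcal A$ on $\cH$ until the first of two events: it emits its first output, or it terminates. By the definition of polynomial delay, either event happens within $p(L(\cH))$ steps.
\begin{itemize}
  \item If $\mathcal A$ terminates with no output, then no coatom exists, and we answer no.
  \item If $\mathcal A$ emits a coatom $M$ first, the ordering guarantee makes $|V\setminus M|$ the minimum complement size over all coatoms. We then answer yes exactly when $|V\setminus M|\le k$.
\end{itemize}
Reading the size $|V\setminus M|$ off the $|V|$-bit characteristic vector takes linear time, so the whole procedure is polynomial. The same simulation also yields a minimum-cardinality nonempty stopping set, namely $V\setminus M$. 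This matches the optimization form in \cref{prop:min-complement-hard}, by the complement duality of \cref{prop:complement-duality}.

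Two details need care. First, the time to the first output, or to termination when there is no output, must be polynomial. This is built into the paper's definition of $\DelayP$, which bounds the preprocessing time and the time from the last output to termination. Second, the hard instances of \cref{prop:min-complement-hard} are normalized hypergraphs of polynomial size obtained from parity-check matrices, so $\mathcal A$ may legitimately be run on them. Neither detail is a real obstacle.

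The same argument goes through under a weaker hypothesis. It suffices that the first output appears within polynomial time, so even an incremental-polynomial algorithm would be excluded.
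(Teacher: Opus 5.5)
Your proposal is correct and is essentially the paper's own proof. Both run the assumed algorithm until its first output or termination, read off the minimum complement size from the first output, and compare it with $k$, which decides the $\NP$-complete problem of \cref{prop:min-complement-hard} in polynomial time. Your added remark, that only a polynomial bound on the time to the first output (or to termination when there is none) is needed, is also valid.
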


\begin{proof}
Suppose that such an algorithm exists. 
Given $\cH$ and $k$, run it until its first output or termination. 
If it terminates without an output, then no coatom exists. 
Otherwise, let $M$ be its first output. 
Since the outputs are ordered by nondecreasing complement size, 
$|V\setminus M|$ is minimum among all coatoms. 
Hence a coatom with complement size at most $k$ exists 
if and only if $|V\setminus M|\le k$. 
This would solve the problem of \cref{prop:min-complement-hard} in polynomial time.
\end{proof}

\subsection{Hardness transfer from maximal Horn models}
\label{sec:hardness-transfer}

We now transfer hardness results for maximal Horn models through the
representation theorem.
The source is the construction of
\citet{KavvadiasSideriStavropoulos2000}.
By \cref{thm:compiler}, maximal source models correspond bijectively
to target coatoms, the original coordinates are preserved, and the
forward and inverse maps are computable in polynomial time.
These properties allow us to transfer partial output families,
coordinate constraints, and complete enumeration.

For a Horn CNF $\Gamma$, let $\MaxMod(\Gamma)$ denote the family of
its inclusion-maximal models; thus
$\MaxMod(\Gamma):=\max_{\subseteq}\Mod(\Gamma)$.
The incremental maximal-model problem asks, given $\Gamma$ and an
explicit family $\mathcal L\subseteq\MaxMod(\Gamma)$, whether
$\MaxMod(\Gamma)\setminus\mathcal L$ is nonempty.

The following statement combines
\citet[Theorem~4 and Corollary~5]
{KavvadiasSideriStavropoulos2000} with the properties of the
reduction used in their proof.

\begin{theorem}[Kavvadias--Sideri--Stavropoulos]
\label{thm:kss}
The incremental maximal-model problem for Horn CNFs is
$\NP$-complete, and maximal models of Horn CNFs cannot be enumerated
in output-polynomial time unless $\mathsf P=\mathsf{NP}$.

More specifically, from an instance $F$ of
\textsc{Positive One-in-Three 3SAT} with $m$ clauses, the reduction
constructs in polynomial time a Horn CNF $\Gamma_F$, a distinguished
variable $z$, and a family $\mathcal L_F$ of exactly $m$ maximal
models such that $\MaxMod(\Gamma_F)=\mathcal L_F$ when $F$ is a
no-instance, whereas $F$ is a yes-instance exactly when
$\Gamma_F$ has a maximal model omitting $z$.
The formula $\Gamma_F$ and the complete encoding of
$\mathcal L_F$ have polynomial size.
\end{theorem}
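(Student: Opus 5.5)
The statement is a citation of \citet[Theorem~4 and Corollary~5]{KavvadiasSideriStavropoulos2000}. What needs proving is that their reduction has the additional structure listed in the second paragraph. I would therefore recall their construction explicitly and check each property directly, instead of relying only on the published theorem. I would not try to invent a new reduction.

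\emph{The construction.} Let $F$ have variables $x_1,\ldots,x_n$ and positive clauses $c_1,\ldots,c_m$ with $c_j=\{a_j,b_j,d_j\}$. The formula $\Gamma_F$ is built on the variables of $F$, one auxiliary variable $y_j$ per clause, and the distinguished variable $z$. It combines three groups of Horn clauses:
\begin{itemize}
  \item negative clauses $\{u,v\}\to\bot$ for distinct $u,v\in c_j$, which enforce the ``at most one'' half of the one-in-three condition;
  \item definite clauses linking each clause variable $y_j$ to the literals of $c_j$ and to $z$;
  \item one negative clause $\{z,y_1,\ldots,y_m\}\to\bot$, or the exact variant used in their paper, which prevents $z$ from coexisting with a full set of witnesses.
\end{itemize}
The family $\mathcal L_F=\{L_1,\ldots,L_m\}$ is defined explicitly. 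Each $L_j$ contains $z$ and is obtained by giving up exactly the clause-$j$ witness. Since each $L_j$ has size $O(n+m)$, both $\Gamma_F$ and the encoding of $\mathcal L_F$ are polynomial.

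\emph{The verification.} I would carry it out in four steps.
\begin{enumerate}
  \item Each $L_j$ is a model. It is maximal because adding any missing variable triggers a forward-chaining derivation of $\bot$. This can be checked with the closure criterion of \cref{lem:coatom-verification}, applied to the Horn CNF in place of $\Phi_{\cH}$.
  \item Every maximal model containing $z$ is one of the $L_j$. The negative clause containing $z$ forces some $y_j$ to be false, and the definite clauses then force the remaining coordinates, so the model is contained in, and hence equal to, some $L_j$.
  \item A maximal model $M$ omitting $z$ yields the truth assignment $M\cap\{x_1,\ldots,x_n\}$. This assignment picks at most one literal per clause. Because $z$ cannot be added to $M$, every clause must be hit, so the assignment is a one-in-three solution.
  \item Conversely, every one-in-three solution extends to a model omitting $z$, and the model obtained by saturating the $y_j$ is maximal.
\end{enumerate}
Together these give: $F$ is a yes-instance exactly when some maximal model omits $z$, and otherwise $\MaxMod(\Gamma_F)=\mathcal L_F$.

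\emph{The complexity consequences.} Membership in $\NP$ of the incremental problem holds because a witness is a maximal model not in $\mathcal L$. Verifying it takes one model check plus $|X|$ unit-propagation checks, each computing the closure of $M\cup\{v\}$ and testing consistency. Hardness follows from the equivalence above with $\mathcal L=\mathcal L_F$. For enumeration, suppose an algorithm runs in time $p(\text{input}+\text{output})$. Run it on $\Gamma_F$ for $p(|\Gamma_F|+|\mathcal L_F|+|X|)$ steps.
\begin{itemize}
  \item If it halts having output exactly $\mathcal L_F$, then $F$ is a no-instance.
  \item If it outputs an extra model, or fails to halt within the budget, then $\MaxMod(\Gamma_F)\ne\mathcal L_F$ and $F$ is a yes-instance.
\end{itemize}
This decides an $\NP$-complete problem in polynomial time, so such an algorithm cannot exist unless $\mathsf P=\mathsf{NP}$.

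The main obstacle is step~2 together with the maximality part of step~4: one must rule out stray maximal models that contain $z$ but are not in $\mathcal L_F$. This depends on the exact wiring of the $y_j$ clauses, which I would reconstruct from the original paper. I would verify it by an exhaustive case analysis on which $y_j$ is false, checking that forward chaining leaves no freedom beyond $L_j$.
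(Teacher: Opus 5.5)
Your outer structure (recall the reduction, treat maximal models with and without $z$ separately, then run the time-budget argument) matches Appendix~\ref{app:kss} and Lemma~\ref{lem:kss-family-hard}. The genuine gap is the one ingredient you make explicit: imposing ``at most one per clause'' by negative clauses $\{u,v\}\to\bot$. That constraint then binds in every model, including those containing $z$, so an $L_j$ that gives up only the clause-$j$ witness violates it. Take the natural gate clauses $\{z,x\}\to y_j$ for $x\in c_j$. The maximal models containing $z$ and omitting $y_j$ are then exactly $\{z\}\cup\{y_i:i\ne j\}\cup S$, one for every inclusion-maximal $S\subseteq X\setminus c_j$ meeting each clause at most once. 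Appending $k$ clauses on fresh, pairwise disjoint variables to any no-instance keeps it a no-instance but yields at least $3^k$ such models. Hence on no-instances $\MaxMod(\Gamma_F)$ is not an explicit family of size $m$, your step~2 is false, and both the incremental reduction and the branch ``halts having output exactly $\mathcal L_F$'' fail. The missing idea is to make the at-most-one constraint conditional on $z$ being false. The paper uses the definite pair clauses $\{x_{i,1},x_{i,2}\}\to z$, $\{x_{i,2},x_{i,3}\}\to z$, $\{x_{i,3},x_{i,1}\}\to z$, which are vacuous once $z$ is true. Then $T_i=\{z\}\cup\{y_j:j\ne i\}\cup(X_F\setminus C_i)$ is a model, and the gates $\{z,x_{i,j}\}\to y_i$ place every model that contains $z$ and omits $y_i$ below $T_i$.

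The second missing ingredient is the back clauses $\{y_i\}\to z$, which your step~4 (``saturating the $y_j$'') contradicts. Suppose all $y_j$ may be true while $z$ is false, as step~4 assumes. Then a maximal model omitting $z$ contains every $y_j$, and $z$ is blocked by $\{z,y_1,\ldots,y_m\}\to\bot$ whether or not every clause is hit, so the inference in your step~3 is invalid. Concretely, on every no-instance an inclusion-maximal selection meeting each clause at most once must miss some clause. Together with all $y_j$ it is still a maximal model omitting $z$. With the back clauses, every model omitting $z$ also omits every $y_i$. Such a model that misses $C_i$ lies strictly below $T_i$, which is what proves step~3. The maximal models of step~4 are then the one-in-three solutions $M\subseteq X_F$ with $z$ and all $y_i$ false. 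Deferring ``the exact wiring'' to the original paper therefore defers exactly the content of Lemmas~\ref{lem:kss-z1} and~\ref{lem:kss-z0}, and the wiring you do sketch fails both.
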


The same reduction gives the restricted-family lower bound used for
the enumeration transfer.

\begin{lemma}
\label{lem:kss-family-hard}
Unless $\mathsf P=\mathsf{NP}$, the maximal models of the formulas
$\Gamma_F$ cannot be enumerated in output-polynomial time.
\end{lemma}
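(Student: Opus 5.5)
The plan is the standard argument that turns an incremental-decision lower bound into an enumeration lower bound, applied to the restricted family $\{\Gamma_F\}$ from \cref{thm:kss}. Suppose, for contradiction, that some algorithm $\mathcal A$ enumerates $\MaxMod(\Gamma_F)$ for every instance $F$ in total time at most $p(|\Gamma_F|+N)$. Here $p$ is a fixed polynomial and $N$ is the total encoded length of the outputs. Since outputs are subsets of the variable set $X_F$ of $\Gamma_F$ written as bit vectors, $N\le|\MaxMod(\Gamma_F)|\cdot|X_F|$. I will use $\mathcal A$ to decide \textsc{Positive One-in-Three 3SAT} in polynomial time.

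Given $F$ with $m$ clauses, first build $\Gamma_F$, $z$ and $\mathcal L_F$ in polynomial time, as \cref{thm:kss} allows. Set the budget
\[
  B:=p\bigl(|\Gamma_F|+(m+1)|X_F|\bigr),
\]
which is polynomial in $|F|$. Run $\mathcal A$ on $\Gamma_F$ for at most $B$ steps. There are three outcomes.

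\begin{enumerate}
  \item $\mathcal A$ outputs a maximal model $M$ with $z\notin M$. Answer ``yes''.
  \item $\mathcal A$ outputs at least $m+1$ distinct models. Answer ``yes'', because \cref{thm:kss} gives $\MaxMod(\Gamma_F)=\mathcal L_F$ with $|\mathcal L_F|=m$ for every no-instance.
  \item $\mathcal A$ terminates within $B$ steps with neither of the above. Answer ``no''.
\end{enumerate}

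If the budget runs out without case~1 or case~2, also answer ``yes``: on a no-instance there are exactly $m$ outputs, so $N\le m|X_F|$ and $\mathcal A$ must halt within $B$ steps. Checking whether $z\in M$ takes linear time per output, so the whole procedure runs in polynomial time.

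For correctness, note first that on a no-instance $\mathcal A$ halts within the budget having output exactly $\mathcal L_F$. By \cref{thm:kss}, no model in $\mathcal L_F$ omits $z$, since such a model would make $F$ a yes-instance. So the procedure answers ``no''. On a yes-instance, some maximal model omits $z$, so either it appears among the outputs within the budget, or more than $m$ outputs appear first, or the budget is exceeded. A complete enumeration cannot halt within the budget without outputting that model. In every one of these subcases the answer is ``yes''. This decides an $\NP$-complete problem in polynomial time, giving $\mathsf P=\mathsf{NP}$.

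The step that needs care is the budget argument. The runtime bound of $\mathcal A$ is stated in terms of the full output, so cutting the run off must be justified only on no-instances, where the output size is known to be exactly $m$. Relying on the precise description in \cref{thm:kss} that $\MaxMod(\Gamma_F)=\mathcal L_F$ for no-instances, together with the $z$-criterion, sidesteps any need to verify outputs against $\mathcal L_F$ explicitly. If one prefers, one can also compare outputs with the explicit list $\mathcal L_F$: any output outside it certifies a yes-instance.
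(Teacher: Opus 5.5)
Your proposal is correct and follows the paper's argument: run the assumed enumerator for a polynomial budget calibrated to the known no-instance output $\mathcal L_F$, and read the answer off whether it halts within that budget and what it outputs. The only difference is cosmetic. You detect yes-instances through the $z$-criterion and an output count, whereas the paper compares each output against the explicit list $\mathcal L_F$.
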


\begin{proof}
Suppose that such an enumerator exists, and let $p$ be a polynomial
bounding its running time in the combined input and output length.
Given $F$, construct $\Gamma_F$ and $\mathcal L_F$ as in
\cref{thm:kss}, and let $B_F$ be their combined encoded length.
Run the enumerator on $\Gamma_F$ for $p(B_F)$ steps.

If it outputs a maximal model outside $\mathcal L_F$, then $F$ is a
yes-instance.
If it terminates after outputting exactly $\mathcal L_F$, then $F$ is
a no-instance.
If it has not terminated within $p(B_F)$ steps, then $F$ is a
yes-instance: if $F$ were a no-instance, the complete output would be
$\mathcal L_F$, and the assumed running-time bound would force the
enumerator to terminate within $p(B_F)$ steps.
Thus the enumerator would decide
\textsc{Positive One-in-Three 3SAT} in polynomial time.
\end{proof}

Appendix~\ref{app:kss} gives the construction in the notation 
used here and verifies these properties directly.
Before applying the compiler, we normalize $\Gamma_F$ by removing
duplicate clauses; this does not change its model family, and all
clause premises remain nonempty.
By \cref{cor:nonempty-premises}, every hyperedge of
$\cC(\Gamma_F)$ therefore has size two or three.

We first transfer the incremental hardness result.

\begin{theorem}
\label{thm:incremental-rank3}
The problem \IncHH{} is $\NP$-complete even when every input 
hyperedge has size two or three. 
The hardness also holds for the promise version in which 
every member of the input family is guaranteed to be a coatom.
\end{theorem}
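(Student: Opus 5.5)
Membership in $\NP$ and hardness are handled separately; hardness is a direct transfer of \cref{thm:kss} through \cref{thm:compiler}.

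\textbf{Membership.} A certificate for a yes-instance is a coatom $M$ not appearing in $\mathcal L$. The verifier does the following.
\begin{enumerate}
\item It checks each listed $M_i$ with \cref{lem:coatom-verification}, at a cost of at most $|V|+1$ closure computations per set, that is, $O(|V|L(\cH))$ time each.
\item It checks the certificate $M$ in the same way.
\item It confirms that $M$ differs from every $M_i$ by comparing characteristic vectors.
\end{enumerate}
All of this is polynomial in the input length, so the problem lies in $\NP$. In the promise version the first step is unnecessary, and membership is immediate.

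\textbf{Hardness.} Start from an instance $F$ of \textsc{Positive One-in-Three 3SAT}. Build $\Gamma_F$ and $\mathcal L_F$ as in \cref{thm:kss}, and normalize $\Gamma_F$ by removing duplicate clauses. All premises are nonempty, so by \cref{cor:nonempty-premises} the hypergraph $\cH:=\cC(\Gamma_F)$ has every hyperedge of size two or three, and \cref{thm:compiler} builds it in linear time. Map the list to
\[
  \mathcal L:=\bigl(\widehat\eta_{\Gamma_F}(M)\bigr)_{M\in\mathcal L_F},
\]
computing each entry in $O(\|\Gamma_F\|_{\mathrm{inc}})$ time.

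Since $\widehat\eta_{\Gamma_F}$ is an order isomorphism from $\Mod(\Gamma_F)$ onto the proper models of $\Phi_{\cH}$, it maps maximal models bijectively onto coatoms. Injectivity makes the entries of $\mathcal L$ pairwise distinct, and because $\mathcal L_F\subseteq\MaxMod(\Gamma_F)$, every entry of $\mathcal L$ is a coatom. The produced instance therefore satisfies the promise, which gives the promise version as well.

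By the same bijection, a coatom outside $\mathcal L$ exists if and only if $\MaxMod(\Gamma_F)\setminus\mathcal L_F\ne\varnothing$. It remains to show that this holds exactly when $F$ is a yes-instance, using the specific properties in \cref{thm:kss}.
\begin{itemize}
\item If $F$ is a no-instance, then $\MaxMod(\Gamma_F)=\mathcal L_F$, so no coatom lies outside $\mathcal L$.
\item If $F$ is a yes-instance, then $\Gamma_F$ has a maximal model omitting $z$. That model is not in $\mathcal L_F$ provided every member of $\mathcal L_F$ contains $z$.
\end{itemize}

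\textbf{Main obstacle.} The only nonformal point is the fact just used: every member of $\mathcal L_F$ contains $z$. I would obtain it from the verified properties in Appendix~\ref{app:kss}. If that fact is not available in this form, I would argue more simply: when $F$ is a yes-instance, $\MaxMod(\Gamma_F)\ne\mathcal L_F$, since otherwise the no-instance characterization (``$\MaxMod=\mathcal L_F$ exactly for no-instances'') would be violated. This alternative needs the equivalence ``$F$ is a no-instance if and only if $\MaxMod(\Gamma_F)=\mathcal L_F$'', which I expect the appendix to provide. Either route gives a polynomial-time many-one reduction, so hardness follows.
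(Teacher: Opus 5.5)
Your proposal is correct and follows the paper's proof essentially step for step. Membership uses \cref{lem:coatom-verification} on the certificate and on every listed set. Hardness maps $\mathcal L_F$ through $\widehat\eta_{\Gamma_F}$, invoking the coatom bijection of \cref{thm:compiler}, \cref{cor:nonempty-premises} for the edge sizes, and the fact that the image consists of coatoms for the promise version. The point you flag as the main obstacle is immediate: each $T_i$ in Appendix~\ref{app:kss} contains $z$ by definition, and \cref{cor:kss-family} also states outright that $\MaxMod(\Gamma_F)=\mathcal L_F$ exactly when $F$ is a no-instance.
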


\begin{proof}
For membership in $\NP$, use a coatom not contained in the given
family as a certificate and verify it, together with every listed
set, using \cref{lem:coatom-verification}.

For hardness, let $F$ be an instance of
\textsc{Positive One-in-Three 3SAT}, and construct
$\Gamma_F$ and $\mathcal L_F$ as in \cref{thm:kss}.
Construct $\cC(\Gamma_F)$ and replace each
$M\in\mathcal L_F$ by the coatom
$\widehat\eta_{\Gamma_F}(M)$.
By \cref{thm:compiler}, this map is a bijection from
$\MaxMod(\Gamma_F)$ onto the coatom family of
$\Phi_{\cC(\Gamma_F)}$.
Therefore the mapped family omits a target coatom if and only if
$\mathcal L_F$ omits a maximal model of $\Gamma_F$.
By \cref{thm:kss}, this occurs exactly when $F$ is a yes-instance.
The target hypergraph and the mapped family have polynomial size.
Since every premise of $\Gamma_F$ is nonempty,
\cref{cor:nonempty-premises} shows that every target hyperedge has
size two or three.

The mapped family consists entirely of coatoms, 
so the same reduction proves hardness for the promise version.
\end{proof}

The distinguished variable $z$ also transfers the hardness to the
constrained extension problem.

\begin{theorem}
\label{thm:extension-23}
The problem \HHEx{} is $\NP$-complete even when every input
hyperedge has size two or three.
The hardness holds with $P=\varnothing$ and $|N|=1$.
\end{theorem}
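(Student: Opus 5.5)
Membership in $\NP$ is immediate: a certificate is a coatom $M$ with $P\subseteq M$ and $M\cap N=\varnothing$, and $M$ can be checked with at most $|V|+1$ closure computations via \cref{lem:coatom-verification}. The two disjointness conditions are checked directly on the bit vectors.

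For hardness, I will reduce from \textsc{Positive One-in-Three 3SAT} through \cref{thm:kss}. Given $F$, I construct $\Gamma_F$ and its distinguished variable $z$. After removing duplicate clauses, $\Gamma_F$ still has only nonempty premises. I then build $\cC(\Gamma_F)$ by \cref{thm:compiler}; by \cref{cor:nonempty-premises}, every hyperedge has size two or three. The source variable $z\in X$ is also a target element, and I output the instance $(\cC(\Gamma_F),P,N)$ with $P=\varnothing$ and $N=\{z\}$. The construction is linear in $\|\Gamma_F\|_{\mathrm{inc}}$, which is polynomial in $|F|$.

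For correctness, by \cref{thm:kss} the instance $F$ is a yes-instance exactly when $\Gamma_F$ has a maximal model $M$ with $z\notin M$. By \cref{thm:compiler}, $\widehat\eta_{\Gamma_F}$ is a bijection from $\MaxMod(\Gamma_F)$ onto the coatoms of $\Mod(\Phi_{\cC(\Gamma_F)})$. It also satisfies $\widehat\eta_{\Gamma_F}(M)\cap X=M$, and every coatom $K$ arises this way with $M=K\cap X$. Since $z\in X$, we have $z\notin\widehat\eta_{\Gamma_F}(M)$ if and only if $z\notin M$. Hence a target coatom omitting $z$ exists if and only if a maximal source model omitting $z$ exists, which happens exactly when $F$ is a yes-instance.

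The main point requiring care is that $z$ is an original coordinate rather than an auxiliary one. That is what lets the singleton constraint $N=\{z\}$ transfer exactly through the restriction-to-$X$ inverse map. Another concern would be auxiliary target coatoms, but \cref{thm:compiler} excludes them. Everything else follows directly from the cited results.
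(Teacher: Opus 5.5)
Your proposal is correct and follows essentially the same route as the paper's proof. Membership comes from \cref{lem:coatom-verification}. Hardness comes from the reduction of \textsc{Positive One-in-Three 3SAT} through $\Gamma_F$ and $\cC(\Gamma_F)$ with $P=\varnothing$ and $N=\{z\}$, using the coatom bijection and $\widehat\eta_{\Gamma_F}(M)\cap X=M$ from \cref{thm:compiler}, and the size bound comes from \cref{cor:nonempty-premises}.
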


\begin{proof}
Membership in $\NP$ follows from
\cref{lem:coatom-verification}.

Let $F$ be an instance of
\textsc{Positive One-in-Three 3SAT}, and construct
$\Gamma_F$ and its distinguished variable $z$ as in
\cref{thm:kss}.
Apply \cref{thm:compiler} to obtain $\cC(\Gamma_F)$.
Let $z$ also denote the corresponding target element, and set
$P=\varnothing$ and $N=\{z\}$.

For every maximal model $M$ of $\Gamma_F$, the corresponding target
coatom $K=\widehat\eta_{\Gamma_F}(M)$ satisfies $K\cap X=M$.
Consequently, $K\cap N=\varnothing$ if and only if $z\notin M$.
By \cref{thm:kss}, such a maximal model exists exactly when $F$ is a
yes-instance.

Since every premise of $\Gamma_F$ is nonempty,
\cref{cor:nonempty-premises} shows that every target hyperedge has
size two or three.
\end{proof}

Finally, the same correspondence transfers the lower bound for
complete enumeration.

\begin{theorem}
\label{thm:rank3-hardness}
Unless $\mathsf P=\mathsf{NP}$, \HHEnum{} has no output-polynomial
algorithm, even when every input hyperedge has size two or three.
\end{theorem}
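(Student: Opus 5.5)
We argue by contradiction: from a hypothetical output-polynomial algorithm for \HHEnum{} we construct one for the maximal models of the formulas $\Gamma_F$, which \cref{lem:kss-family-hard} rules out unless $\mathsf P=\mathsf{NP}$. So suppose $\mathcal A$ enumerates the coatoms of $\Mod(\Phi_{\cH})$ for every normalized $\cH$ of rank at most three, in total time $p(L(\cH)+O)$, where $O$ is the total encoded output length and $p$ is a polynomial.

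Given $F$, we first build $\Gamma_F$ as in \cref{thm:kss} and remove duplicate clauses. Every premise stays nonempty, so by \cref{cor:nonempty-premises} each hyperedge of $\cC(\Gamma_F)$ has size two or three. We then construct $\cC(\Gamma_F)$ in $O(\|\Gamma_F\|_{\mathrm{inc}})$ time by \cref{thm:compiler}, run $\mathcal A$ on it, and map every output coatom $K$ to $K\cap X$. By \cref{thm:compiler}, restriction to $X$ is a bijection from the coatoms of $\Mod(\Phi_{\cC(\Gamma_F)})$ onto $\MaxMod(\Gamma_F)$. Hence this procedure lists every maximal model of $\Gamma_F$ exactly once, and each conversion takes $O(\|\Gamma_F\|_{\mathrm{inc}})$ time.

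The main point to verify is that the running time is polynomial in the \emph{source} input and output sizes. The target input length satisfies $L(\cC(\Gamma_F))=O(\|\Gamma_F\|_{\mathrm{inc}})$. Each target output is a $|U_\Gamma|$-bit vector, and $|U_\Gamma|=O(\|\Gamma_F\|_{\mathrm{inc}})$. So the target output length is at most $O(\|\Gamma_F\|_{\mathrm{inc}})$ times the number of maximal models, which is polynomial in the source input length plus the source output length. Note that the number of outputs is exactly the same on both sides, because of the bijection, so no extra target coatoms can inflate the running time. Adding the linear preprocessing and the per-output conversion cost, the total time is polynomial in $\|\Gamma_F\|_{\mathrm{inc}}$ plus the total length of $\MaxMod(\Gamma_F)$.

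This gives an output-polynomial enumerator for $\MaxMod(\Gamma_F)$, which contradicts \cref{lem:kss-family-hard} unless $\mathsf P=\mathsf{NP}$. One could also argue directly with the clock-bounded simulation from the proof of \cref{lem:kss-family-hard}: run $\mathcal A$ against the mapped family $\widehat\eta_{\Gamma_F}(\mathcal L_F)$ and stop after polynomially many steps.
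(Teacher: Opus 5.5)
Your proposal is correct and follows the paper's proof essentially step by step: build $\Gamma_F$, compile it to $\cC(\Gamma_F)$, run the enumerator and restrict each coatom to $X$, observe that the target input and per-output lengths are linear in $\|\Gamma_F\|_{\mathrm{inc}}$ and that no extra coatoms appear, and conclude with a contradiction to \cref{lem:kss-family-hard}. One small tightening: assume the hypothetical algorithm only on hypergraphs whose hyperedges all have size two or three, rather than on all hypergraphs of rank at most three; your argument only ever invokes it on such instances, so it goes through unchanged and proves the statement at full strength.
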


\begin{proof}
Suppose that such an algorithm exists.
Given an instance $F$ of
\textsc{Positive One-in-Three 3SAT}, construct $\Gamma_F$ as in
\cref{thm:kss}, and then construct $\cC(\Gamma_F)$ using
\cref{thm:compiler}.
Run the assumed coatom enumerator on $\cC(\Gamma_F)$ and replace each
output coatom $K$ by its restriction $K\cap X$.
By \cref{thm:compiler}, this produces every maximal model of
$\Gamma_F$ exactly once.

The incidence length of $\cC(\Gamma_F)$ is
$O(\|\Gamma_F\|_{\mathrm{inc}})$, and every target coatom is encoded
by a bit vector of length
$O(\|\Gamma_F\|_{\mathrm{inc}})$.
If there are $N$ outputs, their total target encoding length is
therefore $O(N\|\Gamma_F\|_{\mathrm{inc}})$.
Since source models are also represented by bit vectors and
$\Gamma_F$ has at least one variable, $N$ is at most the total
encoded length of the source output.
Consequently, the resulting maximal-model enumerator runs in time
polynomial in the source input length and the total source output
length.

This would give an output-polynomial algorithm for enumerating the
maximal models of the formulas $\Gamma_F$, contradicting
\cref{lem:kss-family-hard}.
Finally, every premise of $\Gamma_F$ is nonempty, so
\cref{cor:nonempty-premises} ensures that every hyperedge of
$\cC(\Gamma_F)$ has size two or three.
\end{proof}

The results proved so far are summarized in \cref{tab:tasks}.

\begin{table}[t]
\centering
\caption{Complexity of the coatom problems considered in this section.}
\label{tab:tasks}
\small
\begin{tabularx}{\textwidth}{
  @{}
  >{\raggedright\arraybackslash}p{0.40\textwidth}
  >{\raggedright\arraybackslash}X
  @{}
}
\toprule
Problem or task
  & Result \\
\midrule
Coatom verification
  & solvable in polynomial time for arbitrary hypergraphs \\

Finding one coatom, or determining that none exists
  & solvable in polynomial time for arbitrary hypergraphs \\

Existence of a coatom $M$ with $|V\setminus M|\le k$
  & $\NP$-complete for general hypergraphs \\
  
Enumeration in nondecreasing order of complement size
  & not in $\DelayP$ unless $\mathsf P=\mathsf{NP}$, for general hypergraphs \\

\IncHH{}
  & $\NP$-complete even when every hyperedge has size two or three \\

\HHEx{}
  & $\NP$-complete even when every hyperedge has size two or three \\

\HHEnum{}
  & not in $\OutputP$ unless $\mathsf P=\mathsf{NP}$, even when
    every hyperedge has size two or three \\
\bottomrule
\end{tabularx}
\end{table}

Thus the enumeration lower bound is not a first-solution lower bound: 
an arbitrary coatom can be found in polynomial time. 
Hardness arises when one imposes a size objective, 
asks for a coatom satisfying additional constraints, 
asks whether a valid partial output family omits a coatom, or generates the entire family.

\section{Rank and frequency thresholds}
\label{sec:thresholds}

The lower bounds of \cref{sec:consequences} already hold when every
hyperedge has size two or three, but they do not initially bound
element frequency.
This section identifies the adjacent structural thresholds.
Rank at most two and frequency at most two both admit output-linear 
total-time coatom generation after normalization. 
The frequency-two case also admits a polynomial-delay, 
polynomial-space algorithm through its graphic-circuit representation.

An incidence-splitting transformation preserves the stopping-set
order and reduces maximum frequency to three.
Consequently, unless $\mathsf P=\mathsf{NP}$, coatom enumeration is
not in $\OutputP$ even when every hyperedge has size two or three and
every element has frequency at most three.
Extension has a different boundary: it is polynomial for rank at most two, 
but remains $\NP$-complete at exact frequency two. The final subsection 
replaces the remaining two-element hyperedges and extends the enumeration 
lower bound to three-uniform hypergraphs without increasing maximum element frequency. 
Under the parity-check interpretation, hypergraph rank is the maximum check-node 
degree and element frequency is the maximum variable-node degree, 
so these results also identify degree thresholds for Tanner graphs.

\subsection{Rank at most two}
\label{sec:rank2}

The rank-two case is elementary, but it gives the positive endpoint
needed for the rank classification.
A hypergraph of rank at most two is an ordinary graph together with
possible singleton hyperedges.
The two-element hyperedges identify vertices in the same connected
component, while a singleton hyperedge forces its entire component
into every closed set.
When no singleton hyperedge is present, this is the
equivalence-relation case discussed by
\citet{BercziBorosMakino2024}.
We record the resulting structure because it also gives direct
algorithms for extension and enumeration.

Assume that $\rank(\cH)\le2$.
Let $G_{\cH}$ be the graph on $V$ whose edges are the two-element
hyperedges of $\cH$, and let $D_1,\ldots,D_t$ be its connected
components, including isolated vertices.
Call a component \emph{forced} if it contains a vertex $v$ such that
$\{v\}\in\cH$.
Let $F$ be the union of the forced components, and denote the
remaining components by $C_1,\ldots,C_q$.

\begin{proposition}
\label{prop:rank2-structure}
For every $Y\subseteq V$,
\[
  \Cl_{\cH}(Y)
  =
  F\cup
  \bigcup_{D_i\cap Y\ne\varnothing}D_i.
\]
Consequently, the closed sets are precisely the sets
$F\cup\bigcup_{i\in I}C_i$, where $I\subseteq[q]$, and the stopping
sets are precisely the unions of unforced components.

If $q=0$, then $V$ is the only closed set and
$\Phi_{\cH}$ has no coatom.
If $q>0$, then
\[
  \cM(\Phi_{\cH})
  =
  \{V\setminus C_i:i\in[q]\},
  \qquad
  \MinStop(\cH)
  =
  \{C_i:i\in[q]\}.
\]
\end{proposition}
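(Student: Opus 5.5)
I will prove the closure formula first and derive the remaining statements from it via \cref{prop:complement-duality}. Write $R(Y):=F\cup\bigcup_{D_i\cap Y\ne\varnothing}D_i$. The argument has two parts: $R(Y)$ is a closed set containing $Y$, and every closed set containing $Y$ contains $R(Y)$. Together these give $\Cl_{\cH}(Y)=R(Y)$.

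\emph{$R(Y)$ is closed and contains $Y$.} Containment of $Y$ is clear, since every vertex of $Y$ lies in some component. For closedness, I check the stopping-set condition on the complement $S=V\setminus R(Y)$. By construction, $R(Y)$ is a union of components and contains every forced component, so $S$ is a union of unforced components. A singleton hyperedge $\{v\}$ has $v$ in a forced component, hence $v\notin S$ and $|\{v\}\cap S|=0$. A two-element hyperedge $\{u,w\}$ lies inside one component, so $|\{u,w\}\cap S|\in\{0,2\}$. Thus $S\in\Stop(\cH)$, and $R(Y)$ is closed by \cref{prop:complement-duality}.

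\emph{Every closed $M\supseteq Y$ contains $R(Y)$.} Let $M$ be closed, so that $S=V\setminus M$ is a stopping set. A singleton hyperedge $\{v\}$ forces $v\notin S$, i.e.\ $v\in M$. Along any two-element hyperedge $\{u,w\}$, the condition $|\{u,w\}\cap S|\ne1$ forces $u\in M\iff w\in M$. Propagating this along paths shows that membership in $M$ is constant on each component $D_i$. So every forced component lies in $M$, and so does every component meeting $Y\subseteq M$. Hence $R(Y)\subseteq M$.

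\emph{Consequences.} The closed sets are exactly the fixed points of the closure, hence exactly the sets $R(Y)$. These are precisely the sets $F\cup\bigcup_{i\in I}C_i$: each $R(Y)$ has this form, and for any $I\subseteq[q]$ one may take $Y=\bigcup_{i\in I}C_i$. Complementation then identifies the stopping sets as the unions of unforced components. If $q=0$, the only closed set is $F=V$, so no coatom exists. If $q>0$, the map $I\mapsto F\cup\bigcup_{i\in I}C_i$ is an order isomorphism from $2^{[q]}$ onto the closed sets, because the $C_i$ are nonempty and pairwise disjoint. The coatoms therefore correspond to $I=[q]\setminus\{i\}$, giving $V\setminus C_i$. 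The complement statement for $\MinStop(\cH)$ follows from \cref{prop:complement-duality}.

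The only step needing real care is the propagation argument in the second part, namely that closedness forces membership to be constant on each connected component. This is a routine induction on path length, so I expect no genuine obstacle.
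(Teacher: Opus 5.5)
Your proof is correct and follows essentially the same component-based argument as the paper: two-element hyperedges force membership to be constant on each component, singleton hyperedges force their whole component into every closed set, and the remaining claims follow by complement duality. You state the two inclusions for $\Cl_{\cH}(Y)$ more explicitly and check closedness of $R(Y)$ through the stopping-set condition rather than through the implications. The route is the same.
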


\begin{proof}
A two-element hyperedge $\{u,v\}$ contributes the implications
$u\to v$ and $v\to u$.
Hence a closed set containing one vertex of a component of
$G_{\cH}$ must contain the entire component.
A singleton hyperedge $\{v\}$ contributes the implication
$\varnothing\to v$.
It therefore forces $v$, and hence the whole component containing
$v$, into every closed set.
Since no implication connects distinct components, the closure of
$Y$ is obtained by taking all forced components together with every
component that meets $Y$.
This proves the closure formula and the description of the closed
sets.

In stopping-set coordinates, a two-element hyperedge forces its two
endpoint bits to agree.
A singleton hyperedge forces its unique bit to be zero, and equality
then forces every bit in the same component to be zero.
Thus a stopping set is an arbitrary union of the unforced components.
Its inclusion-minimal nonempty members are exactly
$C_1,\ldots,C_q$.
The coatom description follows from
\cref{prop:complement-duality}.
\end{proof}

The same component description gives a direct criterion for
constrained extension.

\begin{corollary}
\label{cor:rank2-extension}
Let $P,N\subseteq V$ be disjoint.
There exists a coatom $M$ satisfying $P\subseteq M$ and
$M\cap N=\varnothing$ if and only if some unforced component $C_i$
contains $N$ and is disjoint from $P$.
After the component identifiers and forced status have been stored,
this condition can be tested in $O(|P|+|N|+1)$ time.
\end{corollary}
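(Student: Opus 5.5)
The plan is to derive the criterion directly from the coatom description in \cref{prop:rank2-structure}. If $q=0$ there are no coatoms; in that case no unforced component exists either, so both sides of the equivalence are false. Assume $q>0$. By \cref{prop:rank2-structure}, the coatoms are exactly the sets $M=V\setminus C_i$ for $i\in[q]$. The whole proof then reduces to rewriting the two constraints for such an $M$.

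Fix a coatom $M=V\setminus C_i$.

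\begin{itemize}
\item The condition $P\subseteq M$ means $P\cap C_i=\varnothing$.
\item The condition $M\cap N=\varnothing$ means $N\subseteq V\setminus M=C_i$.
\end{itemize}

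So a coatom with the required properties exists if and only if some index $i$ has $N\subseteq C_i$ and $C_i\cap P=\varnothing$. Equivalently, in stopping-set coordinates, we need a minimal stopping set $S=C_i$ with $N\subseteq S\subseteq V\setminus P$. Both directions follow from the fact that the list of coatoms is exhaustive, so no further argument is needed.

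For the running time, assume each vertex stores the identifier of its component in $G_{\cH}$ and a flag recording whether that component is forced. The test proceeds as follows.

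\begin{itemize}
\item If $N=\varnothing$, the condition reduces to the existence of an unforced component disjoint from $P$. We answer this by counting: we mark the distinct unforced components met by $P$ in $O(|P|+1)$ time, using per-component marks reset afterwards or a timestamp. We then compare this count with the stored value $q$. There is a suitable component if and only if fewer than $q$ unforced components are met.
\item If $N\neq\varnothing$, we read the component of any element of $N$. We check that this component is unforced and that every element of $N$ has the same identifier, in $O(|N|)$ time. We then scan $P$ and check that no element carries that identifier, in $O(|P|)$ time.
\end{itemize}

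The only point needing care is the $N=\varnothing$ case, which requires storing $q$ together with the component data; the logical equivalence itself is immediate.
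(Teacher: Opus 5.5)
Your proof is correct and follows essentially the same route as the paper: you reduce to the coatom list $V\setminus C_i$ from \cref{prop:rank2-structure}, translate the two constraints into $P\cap C_i=\varnothing$ and $N\subseteq C_i$, and test them using stored component identifiers. The only difference is in the case $N=\varnothing$. There you count the distinct unforced components met by $P$ and compare this count with a stored $q$, which makes explicit how the paper's check that ``some unforced component remains unmarked'' fits within the $O(|P|+|N|+1)$ bound.
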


\begin{proof}
By \cref{prop:rank2-structure}, every coatom has the form
$V\setminus C_i$ for an unforced component $C_i$.
For such a coatom, the condition $P\subseteq V\setminus C_i$ is
equivalent to $P\cap C_i=\varnothing$, while
$(V\setminus C_i)\cap N=\varnothing$ is equivalent to
$N\subseteq C_i$.

To test these conditions, mark the unforced component identifiers met
by $P$.
If $N\ne\varnothing$, reject unless all elements of $N$ lie in one
common unforced component; that component is admissible exactly when
its identifier is unmarked.
If $N=\varnothing$, an admissible coatom exists exactly when some
unforced component remains unmarked.
\end{proof}

The unforced components also give an output-linear enumeration after
normalization.

\begin{theorem}
\label{thm:rank2-enum}
The connected components of $G_{\cH}$ and their forced status can be
computed in $O(L(\cH))$ time using $O(L(\cH))$ working space.
All coatoms can then be enumerated exactly once as follows.

If a coatom is output through its sparse complement $C_i$, the delay
is $\Theta(|C_i|+1)$ and the total running time is
\(
  O\left(
    L(\cH)+\sum_{i=1}^{q}(|C_i|+1)
  \right).
\)
If coatoms are output as $|V|$-bit characteristic vectors, the delay
is $\Theta(|V|)$ and the total running time is
$O(L(\cH)+q|V|)$.
The same bounds hold when outputs are ordered by nondecreasing
complement size.
\end{theorem}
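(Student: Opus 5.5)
The plan is to compute the component structure in one linear pass and then read the coatoms off directly from \cref{prop:rank2-structure}.

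\textbf{Preprocessing.} From the normalized incidence list, build the graph $G_{\cH}$ on $V$ by scanning every hyperedge once. Each two-element hyperedge $\{u,v\}$ becomes an adjacency entry in both directions, and each singleton hyperedge $\{v\}$ sets a flag $\mathrm{single}(v)$. A breadth-first or depth-first search over $G_{\cH}$ then labels each vertex with a component identifier. During the same search, each component is marked forced if it contains a flagged vertex, and the vertices of each component are collected into a list. All of this takes $O(|V|+|\cH|+\sum_A|A|)=O(L(\cH))$ time and space. Discarding the forced components leaves the lists of $C_1,\ldots,C_q$.

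\textbf{Enumeration.} By \cref{prop:rank2-structure}, the coatoms are exactly the sets $V\setminus C_i$ for $i\in[q]$, and they are pairwise distinct because the components are disjoint and nonempty. So one iterates over $i$ and outputs each coatom once.
\begin{itemize}
\item \emph{Sparse complement.} Write out the stored list of $C_i$. This costs $\Theta(|C_i|+1)$ per output, which gives the stated delay and the total $O(L(\cH)+\sum_i(|C_i|+1))$.
\item \emph{Characteristic vector.} Maintain an all-ones vector of length $|V|$. For each $i$, clear the positions of $C_i$, output the vector, and then restore those positions. Writing the vector already costs $\Theta(|V|)$, and the updates cost $O(|C_i|)\le O(|V|)$, so the delay is $\Theta(|V|)$ and the total is $O(L(\cH)+q|V|)$.
\end{itemize}
If $q=0$, nothing is output, consistent with the proposition.

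\textbf{Ordering and delay accounting.} For output in nondecreasing complement size, sort the unforced components by $|C_i|$ with a counting sort over keys in $\{1,\ldots,|V|\}$. This costs $O(q+|V|)=O(L(\cH))$, so it fits inside preprocessing and none of the bounds change. The one point needing care is that the delay bounds count time after preprocessing, including the final step to termination. That step is $O(1)$ once the list of components is exhausted. Both lower bounds $\Theta(\cdot)$ are met simply by the cost of writing each output.
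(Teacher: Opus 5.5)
Your proposal is correct and follows essentially the same route as the paper: one BFS/DFS over the two-element hyperedges, with singleton flags marking the forced components, then reading the coatoms $V\setminus C_i$ off \cref{prop:rank2-structure}, bounding total time through the disjointness of the $C_i$, and bucketing the components by size in linear time for the ordered variant. Your clear-and-restore maintenance of an all-ones vector is simply a concrete implementation of the paper's observation that each characteristic-vector output costs $\Theta(|V|)$.
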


\begin{proof}
Construct adjacency lists for the two-element hyperedges and compute
the components of $G_{\cH}$ by DFS or BFS.
The singleton hyperedges identify the forced components, and each
unforced component is stored as a list of its vertices.
By \cref{prop:rank2-structure}, these lists are exactly the minimal
nonempty stopping sets, and their complements are exactly the
coatoms.

Writing the sparse complement $C_i$ requires
$\Theta(|C_i|+1)$ time, whereas writing the characteristic vector of
$V\setminus C_i$ requires $\Theta(|V|)$ time.
Since the unforced components are pairwise disjoint,
$\sum_{i=1}^{q}|C_i|\le |V|$, which gives the stated total-time
bounds.

Finally, the component sizes are integers between $1$ and $|V|$.
They can therefore be stably bucketed by size in linear additional
time and space, without changing the preprocessing, delay, or
total-time bounds.
The case $q=0$ is detected during preprocessing.
\end{proof}

\subsection{Reducing maximum element frequency to three}
\label{sec:splitting}

The representation theorem controls hyperedge size but not element frequency: 
if $\Gamma$ has $n$ source variables and $g$ negative clauses, 
then $\rho$ and $r$ occur in $n+1$ and $n+g+1$ hyperedges 
of $\cC(\Gamma)$, respectively. 
We therefore replace each incidence of an element by a separate occurrence 
and link all occurrences of the same source element by two-element 
equality hyperedges. The resulting transformation preserves 
the entire stopping-set poset while reducing maximum element frequency to three.

For each incidence $(v,A)$ with $A\in\cH$ and $v\in A$, introduce a
distinct occurrence element $v_A$.
For every hyperedge $A\in\cH$, replace $A$ by the hyperedge
$A^\circ:=\{v_A:v\in A\}$.
Fix a source element $v\in V$, and let
$\mathcal O_v:=\{v_A:A\in\cH,\ v\in A\}$ be the set of its occurrence
elements.
Thus $d_v:=|\mathcal O_v|$ is the frequency of $v$ in $\cH$.
Introduce one additional element $t_v$, called the terminal of $v$.
Figure~\ref{fig:incidence-splitting} illustrates the construction when
$v$ belongs to four hyperedges $A_1,\ldots,A_4$.

To force all elements representing $v$ to have the same stopping-set
membership bit while keeping their frequencies bounded, connect
$\mathcal O_v\cup\{t_v\}$ by a tree $T_v$ of maximum degree at most
three.
If $d_v=0$, let $T_v$ consist only of the isolated terminal $t_v$.
If $d_v=1$, join the unique occurrence element directly to $t_v$.
If $d_v\ge2$, introduce fresh internal vertices and choose a tree of maximum
degree at most three whose leaves are exactly the elements of
$\mathcal O_v\cup\{t_v\}$.
Such a tree can be constructed with $O(d_v)$ vertices and edges.
The vertices denoted by $u_1,u_2,u_3$ in
\cref{fig:incidence-splitting} are such internal vertices.
Every edge of $T_v$ is inserted as a two-element hyperedge.
Let $B_v:=V(T_v)$ be the block representing the source element $v$.
The internal vertices, terminals, and occurrence elements used for
different source elements are all distinct, so the blocks $B_v$ are
pairwise disjoint.

We define $\operatorname{Split}(\cH)$ to be the hypergraph whose
ground set is $\bigcup_{v\in V}B_v$ and whose hyperedges are the
replacement hyperedges $A^\circ$ for $A\in\cH$, together with the
two-element sets corresponding to all edges of the trees $T_v$.
Thus every source hyperedge $A$ is replaced by $A^\circ$, and for
each $v\in A$ this replacement contains exactly one element of
$B_v$, namely $v_A$.

\begin{figure}[t]
\centering
\includegraphics[width=0.50\textwidth]{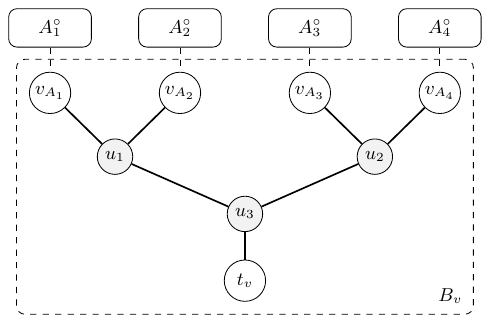}
\caption{The block $B_v$ for a source element $v$ belonging to four
hyperedges $A_1,\ldots,A_4$.
Each incidence $v\in A_i$ is represented by a distinct occurrence
element $v_{A_i}$ contained in the replacement hyperedge
$A_i^\circ$.
The dashed attachments indicate this membership and are not
hyperedges.
The vertices $u_1,u_2,u_3$ are fresh internal vertices of the
subcubic tree $T_v$, while every solid edge is a two-element
hyperedge.
These edges force every stopping set to be constant on $B_v$.
The terminal $t_v$ is later used to transfer extension constraints.}
\label{fig:incidence-splitting}
\end{figure}

For $S\subseteq V$, let
\(
  \widehat S:=\bigcup_{v\in S}B_v.
\)
The equality hyperedges in each tree $T_v$ force every stopping set
of $\operatorname{Split}(\cH)$ to be constant on the block $B_v$.
Moreover, a replacement hyperedge $A^\circ$ contains exactly one
element from $B_v$ for each $v\in A$.
It therefore meets $\widehat S$ in the same number of elements as
$A$ meets $S$.
These observations give an exact correspondence between the stopping
sets of the two hypergraphs.

\begin{theorem}
\label{thm:splitting}
The map $S\mapsto\widehat S$ is an inclusion-order isomorphism from
$\Stop(\cH)$ onto
$\Stop(\operatorname{Split}(\cH))$.
Its inverse sends a stopping set $T$ to
$\{v\in V:t_v\in T\}$.
The map preserves the empty set and hence restricts to a bijection
between the minimal nonempty stopping sets.
Under complement duality, the induced bijection between coatom
families sends a source coatom $M$ to
\(
  V(\operatorname{Split}(\cH))
  \setminus
  \widehat{\,V\setminus M\,}.
\)

Furthermore,
$\Delta(\operatorname{Split}(\cH))\le3$ and
$\rank(\operatorname{Split}(\cH))
\le\max\{2,\rank(\cH)\}$.
The split hypergraph has incidence length $O(L(\cH))$ and can be
constructed in $O(L(\cH))$ time.
The forward and inverse set maps are computable in time linear in
their input and output encodings.
If $\cH$ is normalized, then
$\operatorname{Split}(\cH)$ is normalized.
If every hyperedge of $\cH$ has size two or three, then the same is
true of every hyperedge of $\operatorname{Split}(\cH)$.
\end{theorem}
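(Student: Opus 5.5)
The proof starts with the core correspondence and then handles the structural bounds.

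\emph{Step 1: $\widehat S$ is a stopping set.} For any $S\subseteq V$, the set $\widehat S$ is a union of whole blocks. Hence every tree edge of $T_v$, which lies inside a single block $B_v$, meets $\widehat S$ in zero or two elements. Each replacement hyperedge $A^\circ$ contains exactly one element $v_A$ of $B_v$ for each $v\in A$, and no other elements. Therefore $|A^\circ\cap\widehat S|=|A\cap S|$. So $\widehat S$ is a stopping set of $\operatorname{Split}(\cH)$ if and only if $S\in\Stop(\cH)$.

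\emph{Step 2: every stopping set has this form.} Let $T$ be any stopping set of $\operatorname{Split}(\cH)$. Each tree edge is a two-element hyperedge, so by \cref{lem:eq-gadget} its two endpoints carry equal bits. Since $T_v$ is connected and spans $B_v$, the set $T$ is constant on $B_v$. Put $S=\{v:t_v\in T\}$; then $T=\widehat S$, and Step 1 gives $S\in\Stop(\cH)$.

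\emph{Step 3: order, empty set, coatoms.} Because the blocks are disjoint and nonempty (each contains $t_v$), the map $S\mapsto\widehat S$ preserves and reflects inclusion, and its inverse is read off the terminals. The empty set maps to the empty set. An order isomorphism fixing the bottom element restricts to a bijection between the minimal nonempty members, which gives the bijection on $\MinStop$. The stated coatom formula then follows from \cref{prop:complement-duality}.

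\emph{Step 4: frequency and rank.} An occurrence element $v_A$ lies in $A^\circ$ and in at most two tree edges, since it is a leaf when $d_v\ge2$ and has a single tree edge when $d_v=1$. An internal tree vertex lies in at most three tree edges, and a terminal lies in at most one. Thus $\Delta\le3$. Every hyperedge is either some $A^\circ$, with $|A^\circ|=|A|$, or a tree edge of size $2$, so the rank is at most $\max\{2,\rank(\cH)\}$. The same observation gives the statement about sizes two and three.

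\emph{Step 5: size, time, normalization.} Each tree has $O(d_v+1)$ vertices and edges, and $\sum_v d_v$ is the number of incidences, so the incidence length is $O(L(\cH))$ and the construction runs in linear time. The forward map writes whole blocks, and the inverse map scans the terminals, so both are linear in their encodings.

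The main point to verify is normalization. Replacement edges are pairwise distinct because their occurrence elements are disjoint across distinct $A$. A replacement edge cannot coincide with a tree edge: it contains occurrence elements of distinct source elements, which lie in different blocks, whereas a tree edge lies inside one block. The one exception is a replacement edge of size one, and it consists of a single occurrence element while every tree edge has size two. Distinct tree edges are distinct as sets, and no hyperedge is empty. I also need to confirm that a subcubic tree with prescribed leaf set exists, for instance a path of internal vertices with the leaves attached.
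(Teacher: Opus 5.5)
Your proposal is correct and follows essentially the same route as the paper: block-constancy from the connected equality trees, the intersection identity $|A^\circ\cap\widehat S|=|A\cap S|$, reflection of inclusion via the terminals, and the same frequency, size, and normalization arguments, including the singleton exception for replacement edges. One small tightening: an occurrence element lies in exactly one tree edge (it is a leaf when $d_v\ge2$, or the neighbor of $t_v$ when $d_v=1$), so its frequency is exactly two, although your looser count still yields $\Delta\le3$.
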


\begin{proof}
Let $T$ be a stopping set of
$\operatorname{Split}(\cH)$.
For an arbitrary vertex $v\in V$, every edge of $T_v$ is a
two-element hyperedge and therefore forces its two endpoint membership
bits in $T$ to agree.
Since $T_v$ is connected, either all elements of $B_v$ belong to $T$
or none of them do.
This is also immediate when $d_v=0$, because then
$B_v=\{t_v\}$.
Thus $T$ is constant on every block $B_v$.
Since the blocks are pairwise disjoint and nonempty, there is a unique
set $S\subseteq V$ such that $T=\widehat S$, namely
$S=\{v\in V:t_v\in T\}$.

For every source hyperedge $A\in\cH$, the replacement hyperedge
$A^\circ$ contains the occurrence element $v_A$ from $B_v$ for each
$v\in A$, and no other element from that block.
Consequently,
\(
  |A^\circ\cap\widehat S|=|A\cap S|.
\)
Thus $A^\circ$ satisfies the stopping-set condition with respect to
$\widehat S$ exactly when $A$ satisfies it with respect to $S$.
Every tree edge also satisfies the condition, because either both of
its endpoints belong to $\widehat S$ or neither does.
It follows that
\(
  S\in\Stop(\cH)
  \Leftrightarrow
  \widehat S\in
  \Stop(\operatorname{Split}(\cH)).
\)

The map plainly preserves inclusion.
It also reflects inclusion: if
$\widehat S\subseteq\widehat{S'}$ and $v\in S$, then
$t_v\in\widehat S\subseteq\widehat{S'}$, so $v\in S'$.
Since every block is nonempty,
$\widehat S=\varnothing$ holds exactly when $S=\varnothing$.
The map therefore preserves inclusion-minimal nonempty stopping sets.
The stated coatom correspondence follows by applying
\cref{prop:complement-duality} to the source and target hypergraphs.

An occurrence element $v_A$ lies in the replacement hyperedge
$A^\circ$ and in one tree edge, so its frequency is two.
A terminal belongs to at most one tree edge, while an internal vertex
of $T_v$ belongs to at most three tree edges.
Hence
$\Delta(\operatorname{Split}(\cH))\le3$.
The replacement hyperedges have the same sizes as the original
hyperedges, and every additional hyperedge has size two.
This proves the rank bound and the preservation of edge sizes two
and three.
Distinct source hyperedges yield distinct replacement hyperedges
because the occurrence elements are incidence-specific.
The tree hyperedges are distinct and lie within single blocks, whereas
every replacement hyperedge is either a singleton or meets at least
two blocks.
Hence no tree hyperedge coincides with a replacement hyperedge, and
$\operatorname{Split}(\cH)$ is normalized.

Finally, put
$I=\sum_{A\in\cH}|A|$.
For each $v$, the tree $T_v$ has
$O(d_v+1)$ vertices and edges, and
$\sum_{v\in V}d_v=I$.
All trees together therefore have
$O(|V|+I)$ vertices, edges, and incidences.
The replacement hyperedges contribute $|\cH|$ hyperedges and exactly
$I$ incidences.
Thus
$L(\operatorname{Split}(\cH))=O(L(\cH))$, and the construction can
be carried out in linear time.
The forward map outputs the blocks indexed by $S$, while the inverse
map reads the terminal of each block, giving the claimed bounds for
the set maps.
\end{proof}

The terminals allow extension constraints to be transferred without
expanding the prescribed sets.
For disjoint $P,N\subseteq V$, put
$P^\star:=\{t_v:v\in P\}$ and
$N^\star:=\{t_v:v\in N\}$.
If $M=V\setminus S$ is a source coatom and
$M^\star:=V(\operatorname{Split}(\cH))\setminus\widehat S$ is the
corresponding target coatom, then
$P\subseteq M$ and $M\cap N=\varnothing$ hold exactly when
$P^\star\subseteq M^\star$ and
$M^\star\cap N^\star=\varnothing$.
Combining the splitting transformation with the lower bounds of
\cref{sec:consequences} yields simultaneous rank and frequency
bounds.

\begin{corollary}
\label{cor:rank3-freq3}
\label{cor:extension-rank3-freq3}
\label{cor:incremental-rank3-freq3}
Even when every hyperedge has size two or three and every ground 
element has frequency at most three, \IncHH{} and \HHEx{} are 
$\NP$-complete, and \HHEnum{} is not in $\OutputP$ unless 
$\mathsf P=\mathsf{NP}$. 
The incremental hardness also holds for the promise version, 
and the extension hardness still holds with $P=\varnothing$ 
and a singleton forbidden set.
\end{corollary}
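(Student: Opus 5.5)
The plan is to compose the reductions of \cref{thm:incremental-rank3,thm:extension-23,thm:rank3-hardness} with the splitting transformation of \cref{thm:splitting}. Let $F$ be an instance of \textsc{Positive One-in-Three 3SAT}. Construct $\Gamma_F$, $\mathcal L_F$ and $z$ as in \cref{thm:kss}, then $\cH_F:=\cC(\Gamma_F)$, and finally $\operatorname{Split}(\cH_F)$. By \cref{cor:nonempty-premises}, every hyperedge of $\cH_F$ has size two or three. By \cref{thm:splitting}, the same holds for $\operatorname{Split}(\cH_F)$; moreover $\Delta\le3$, the hypergraph is normalized, and its incidence length is $O(L(\cH_F))=O(\|\Gamma_F\|_{\mathrm{inc}})$. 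Membership in $\NP$ for the two decision problems follows, as before, from \cref{lem:coatom-verification}.

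For \IncHH{}, I will map each $M\in\mathcal L_F$ first to $\widehat\eta_{\Gamma_F}(M)$ and then to the corresponding split coatom
\[
  V(\operatorname{Split}(\cH_F))\setminus\widehat{\,U_{\Gamma_F}\setminus\widehat\eta_{\Gamma_F}(M)\,}.
\]
Both correspondences are bijections between maximal models and coatoms, by \cref{thm:compiler} and \cref{thm:splitting}. So the composite is a bijection from $\MaxMod(\Gamma_F)$ onto the target coatoms. The image list therefore consists only of coatoms, which covers the promise version. It omits a coatom exactly when $\mathcal L_F$ omits a maximal model, that is, exactly when $F$ is a yes-instance.

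For \HHEx{}, I take $P^\star=\varnothing$ and $N^\star=\{t_z\}$. By \cref{thm:compiler}, a target coatom $K$ of $\cH_F$ omits $z$ iff its source maximal model omits $z$. By the terminal remark after \cref{thm:splitting}, the split coatom omits $t_z$ iff $K$ omits $z$. Hence a yes-instance of extension corresponds exactly to a maximal model of $\Gamma_F$ omitting $z$, which occurs iff $F$ is a yes-instance.

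For enumeration, I assume an output-polynomial enumerator for the restricted class. Run it on $\operatorname{Split}(\cH_F)$ and map each output back to a maximal model of $\Gamma_F$. To do this, read the terminals $t_v$ to recover the stopping set of $\cH_F$ (via the inverse map of \cref{thm:splitting}), then restrict to $X$. Each output is produced exactly once, and each conversion takes time linear in the instance size. The one point needing care, which I expect to be the main (if mild) obstacle, is the size accounting. The number $N$ of outputs is at most the total source output length. Each target output has length $O(\|\Gamma_F\|_{\mathrm{inc}})$, since it is a characteristic vector over a ground set of linear size. So total time stays polynomial in the source input plus output length, exactly as in the proof of \cref{thm:rank3-hardness}. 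This contradicts \cref{lem:kss-family-hard}.
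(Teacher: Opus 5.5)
Your proposal is correct and follows essentially the same route as the paper: you compose the reductions of \cref{thm:incremental-rank3,thm:extension-23,thm:rank3-hardness} with the splitting transformation of \cref{thm:splitting}, map the list through the composite coatom bijection, use $P^\star=\varnothing$ and $N^\star=\{t_z\}$ via the terminal equivalence, and pull enumeration outputs back through the linear-time inverse maps. The only detail you leave implicit is that $\Gamma_F$ should first be normalized by removing duplicate clauses before the compiler is applied (the paper states this explicitly); it does not affect the argument.
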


\begin{proof}
For \IncHH{}, compose the reduction of
\cref{thm:incremental-rank3} with
\cref{thm:splitting} and map the given list through the induced
coatom bijection.
The source family consists entirely of coatoms, 
and the splitting map is bijective on coatoms, so the promise is preserved.

For \HHEx{}, start from \cref{thm:extension-23}.
Its prescribed sets satisfy $P=\varnothing$ and $N=\{z\}$; after
splitting, use $P^\star=\varnothing$ and
$N^\star=\{t_z\}$.
The terminal equivalence above preserves the extension condition.

For \HHEnum{}, apply the splitting transformation to the hard
instances of \cref{thm:rank3-hardness}.
The transformation is linear in size and bijective on outputs, with
linear-time forward and inverse maps, so an output-polynomial
enumerator for the split instances would give one for the original
instances.
The structural bounds follow from \cref{thm:splitting}.
\end{proof}

\subsection{Frequency at most two}
\label{sec:freq2}

The frequency-two case admits a direct graph-theoretic
representation.
When every ground element belongs to at most two hyperedges, the
hyperedges can be viewed as vertices of a multigraph and the ground
elements as labeled edges.
Under this representation, the minimal nonempty stopping sets are 
exactly the graphic circuits. 
This yields output-linear total-time coatom enumeration and, 
separately, a polynomial-delay, polynomial-space algorithm, 
although constrained extension remains $\NP$-complete 
on the three-uniform subclass of exact frequency two.

Assume that $\Delta(\cH)\le2$.
Introduce a new vertex $\alpha$, called the \emph{apex}, and construct
a labeled multigraph $Q_{\cH}$ with vertex set
$\cH\cup\{\alpha\}$.
The apex supplies a missing incidence endpoint for elements that
belong to fewer than two hyperedges.
For each $x\in V$, introduce an edge $e_x$ as follows.
If $x$ belongs to two hyperedges $A$ and $B$, then $e_x$ joins
$A$ and $B$.
If $x$ belongs to exactly one hyperedge $A$, then $e_x$ joins $A$ to
$\alpha$.
If $x$ belongs to no hyperedge, then $e_x$ is a loop at $\alpha$.
Parallel edges are retained.
The three cases are shown in
\cref{fig:apex-multigraph}.

\begin{figure}[t]
\centering
\includegraphics[width=0.88\textwidth]{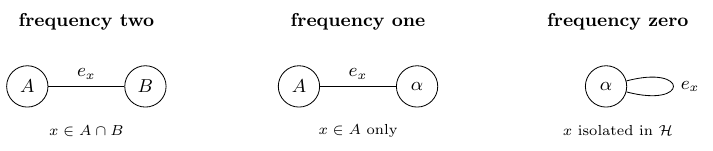}
\caption{The edge of $Q_{\cH}$ associated with a ground element of
frequency two, one, or zero.
Hyperedges of $\cH$ become vertices, while the apex $\alpha$ supplies
any missing incidence endpoint.}
\label{fig:apex-multigraph}
\end{figure}

For $S\subseteq V$, let $E_S:=\{e_x:x\in S\}$.
For every hyperedge vertex $A\in\cH$, an edge $e_x$ is incident with
$A$ exactly when $x\in A$.
Hence
$\deg_{E_S}(A)=|A\cap S|$.
It follows that $S$ is a stopping set of $\cH$ exactly when no
hyperedge vertex has degree one in the subgraph with edge set $E_S$.
No degree condition is imposed at the apex.

\begin{proposition}
\label{prop:freq2-structure}
Suppose that $\Delta(\cH)\le2$.
A nonempty set $S\subseteq V$ is an inclusion-minimal stopping set if
and only if $E_S$ is a circuit of the graphic matroid of
$Q_{\cH}$.
Equivalently, $E_S$ is a loop at the apex, a pair of parallel edges,
or the edge set of a simple cycle of length at least three.
\end{proposition}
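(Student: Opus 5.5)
The plan is to work entirely in the multigraph $Q_{\cH}$, using the identity $\deg_{E_S}(A)=|A\cap S|$ established just before the statement. With this identity, $S$ is a stopping set exactly when the subgraph $(V(Q_{\cH}),E_S)$ has no hyperedge vertex of degree one. The apex may have any degree. The map $S\mapsto E_S$ is an inclusion-preserving bijection between subsets of $V$ and edge subsets of $Q_{\cH}$. Hence it suffices to show that the inclusion-minimal nonempty edge sets with no degree-one vertex other than possibly $\alpha$ are exactly the circuits of $Q_{\cH}$.

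\emph{Step 1: every circuit is a stopping set.} A circuit is either a loop, a pair of parallel edges, or a simple cycle of length at least three. Each vertex of such an edge set has degree $0$ or $2$; a loop contributes $2$ at its vertex. Loops occur only at the apex, and the apex is unconstrained anyway. So every circuit yields a nonempty stopping set.

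\emph{Step 2: every nonempty stopping set contains a circuit.} Let $E_S\neq\varnothing$ with no hyperedge vertex of degree one. If $E_S$ contains a loop (necessarily at $\alpha$), it contains a circuit. Otherwise, suppose $E_S$ were a forest. Take a nontrivial tree component; it has at least two leaves. At most one leaf is $\alpha$, so some leaf is a hyperedge vertex of degree one, a contradiction. Hence $E_S$ contains a cycle, i.e.\ a circuit.

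\emph{Step 3: conclusion.} Combining Steps 1 and 2 with the order isomorphism $S\mapsto E_S$ gives both directions. If $S$ is a minimal nonempty stopping set, then $E_S$ contains a circuit $E_{S'}$. By Step 1, $S'$ is a nonempty stopping set, so minimality forces $S'=S$ and $E_S$ is a circuit. Conversely, let $E_S$ be a circuit. Any nonempty stopping set $S'\subseteq S$ has $E_{S'}$ containing a circuit, which is contained in the circuit $E_S$. Since circuits are incomparable, this gives $E_{S'}=E_S$, so $S$ is minimal. The equivalent description (loop at the apex, parallel pair, or simple cycle of length at least three) is the standard list of graphic circuits in a multigraph, noting that all loops lie at $\alpha$.

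The main obstacle is the handling of the apex in Step 2: one must check that the unconstrained vertex $\alpha$ cannot rescue a forest. The leaf-counting argument above (at least two leaves, at most one of them the apex) is what settles this.
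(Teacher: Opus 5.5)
Your proof is correct. It rests on the same key observation as the paper's proof: a nontrivial tree has at least two leaves, and at most one of them is the apex. You organize the argument differently, however.

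The paper works only with a minimal $F=E_S$. It first shows that the subgraph with edge set $F$ is connected, and deduces that $F$ would otherwise be a nontrivial tree. It then proves the converse direction by a direct check. Every nonempty proper subset of an apex loop, a parallel pair, or a simple cycle leaves some hyperedge vertex of degree one, since a proper subset of a cycle is a disjoint union of paths with a non-apex endpoint.

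You instead prove a stronger covering statement: every nonempty stopping set, minimal or not, contains a circuit. You obtain it by applying the leaf argument to a single tree component of a forest, which makes the paper's connectivity step unnecessary. Both directions then follow formally from the order isomorphism $S\mapsto E_S$ together with the incomparability of graphic circuits.

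Your route is slightly more economical, because one lemma serves both directions, but it relies on the circuit axiom. The paper's converse is instead a self-contained local graph check.
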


\begin{proof}
Let $F=E_S$ be inclusion-minimal among the nonempty edge sets for
which no hyperedge vertex has degree one.
If the subgraph with edge set $F$ had more than one edge-containing
connected component, the edge set of any one component would still
give degree zero or at least two at every hyperedge vertex.
It would therefore correspond to a smaller nonempty stopping set,
contradicting minimality.
Thus the subgraph with edge set $F$ is connected.

Suppose that it contains no graphic circuit.
Then it has no loop or pair of parallel edges and, being connected and
acyclic, is a nontrivial tree.
If the tree does not contain the apex, every leaf is a hyperedge
vertex.
If it contains the apex, it has at least two leaves and at most one
of them is the apex.
In either case some hyperedge vertex has degree one, a contradiction.
Hence $F$ contains a graphic circuit.

The edge set of a graphic circuit satisfies the required degree
condition.
An apex loop has no non-apex endpoint, while every non-apex vertex of
a parallel pair or a simple cycle has degree two.
Since $F$ is inclusion-minimal, it must equal the circuit that it
contains.

Conversely, an apex loop has no nonempty proper subset.
A nonempty proper subset of a parallel pair consists of one edge and
gives degree one at a non-apex endpoint.
A nonempty proper subset of a simple cycle is a disjoint union of
paths, at least one of which has a non-apex endpoint.
That endpoint has degree one.
Thus no nonempty proper subset of a graphic circuit is a stopping set,
so every graphic circuit gives an inclusion-minimal stopping set.
\end{proof}

The proposition reduces coatom enumeration to the standard problem of
listing graphic circuits.

\begin{corollary}
\label{cor:freq2-enum}
Restricted to hypergraphs $\cH$ with $\Delta(\cH)\le2$, 
\HHEnum{} admits output-linear total-time enumeration. 
More precisely, if outputs are represented by their sparse complements, 
all coatoms can be generated in
$O\bigl(L(\cH)+\sum_{S\in\MinStop(\cH)}(|S|+1)\bigr)$
total time. If coatoms are represented by their $|V|$-bit characteristic vectors, 
the total running time is $O(L(\cH)+N|V|)$, where $N=|\MinStop(\cH)|$ 
is the number of outputs. Moreover, \HHEnum{} belongs to $\DelayP$ 
on this class and admits a polynomial-space algorithm.
\end{corollary}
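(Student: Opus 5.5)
Given \cref{prop:freq2-structure}, the statement reduces to listing the circuits of the graphic matroid of $Q_{\cH}$, that is, apex loops, parallel pairs, and simple cycles of length at least three, each exactly once. The first step is to build $Q_{\cH}$ in $O(L(\cH))$ time. Scan the incidence lists and record for each $x\in V$ its at most two incident hyperedges. Then create the edge $e_x$ with endpoints given by those hyperedges, padded with the apex $\alpha$. The graph has $|\cH|+1$ vertices and $|V|$ edges, so its size is $O(L(\cH))$. By \cref{prop:freq2-structure}, the map $C\mapsto\{x:e_x\in C\}$ is a bijection from graphic circuits to $\MinStop(\cH)$. By \cref{prop:complement-duality}, complementing gives exactly the coatoms, each once.

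For output-linear total time, I would handle the degenerate circuits directly.
\begin{itemize}
  \item Apex loops are emitted one by one.
  \item Parallel pairs are found by bucketing edges by unordered endpoint pair using a radix sort in linear time. Each bucket of size $k$ yields its $\binom{k}{2}$ pairs, each at constant cost per output.
\end{itemize}
For simple cycles of length at least three, I would invoke the optimal cycle-listing algorithm of \citet{BirmeleEtAl2013}. It lists all cycles of an undirected graph in $O(m+\sum_C|C|)$ time. To make it see parallel edges and still produce each edge-set cycle exactly once, I would subdivide every edge by a fresh vertex. Cycles of the subdivided simple graph then correspond bijectively to circuits of $Q_{\cH}$ (loops excepted), with lengths doubled, so the bound is preserved. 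The length-two circuits then appear as subdivided $4$-cycles, so the separate parallel-pair step can simply be dropped. Mapping each output cycle back to its labels gives the sparse complement in $O(|S|+1)$ time, which yields the stated bound. Writing characteristic vectors instead costs $\Theta(|V|)$ per output, giving $O(L(\cH)+N|V|)$.

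For $\DelayP$ with polynomial space, I would use flashlight (binary partition) search over the edges. Fix an edge order. At a node of the search, edges are partitioned into forced-in $I$, forced-out $O$, and undecided. The branch is explored only if some circuit $C$ satisfies $I\subseteq C$ and $C\cap O=\varnothing$. This extension test is polynomial. After the loop and parallel cases, $I$ must be a disjoint union of paths, or a full cycle. Checking closability amounts to a single-path test when $I$ is one path. When $I$ has several components, the test becomes a linkage question and may be hard in general.

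This extension test is the main obstacle. Exact-frequency-two extension is in fact $\NP$-complete, as the paper states. To avoid this, I would restrict branching so that $I$ is always a single path grown from a fixed start edge, as in Read--Tarjan style cycle listing. The question then is whether the path $u\ldots w$ can be closed avoiding $O$ and the interior vertices of the path. That is a reachability check in $O(L(\cH))$ time. Every branch then leads to an output within depth $|V|$. The delay is therefore polynomial and the recursion stack uses polynomial space. Since the paper defers this to Appendix~\ref{app:cycle-delay}, I would cite it as a standard implementation.
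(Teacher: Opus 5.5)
Your proposal is correct and follows the paper's proof: build $Q_{\cH}$, emit the apex loops directly, subdivide the remaining edges so that parallel pairs become $4$-cycles, run the cycle lister of \citet{BirmeleEtAl2013}, and pay $\Theta(|V|)$ per output for characteristic vectors. Delete the apex loops before subdividing, as the paper does, so that the subdivided graph really is simple. For the delay bound, Appendix~\ref{app:cycle-delay} splits cycles by their minimum vertex $s$ and neighbor pair $\{a,b\}$ rather than by a Read--Tarjan start edge, but it uses the same single-path reachability pruning that you identify, so either implementation works.
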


\begin{proof}
Construct the apex multigraph $Q_{\cH}$ in $O(L(\cH))$ time. 
Output every loop at the apex directly and delete these loops. 
Subdivide every remaining labeled edge $e=uv$ by a distinct new vertex $s_e$, 
replacing it by the path $u\,s_e\,v$. The resulting graph $G$ is simple and has size $O(L(\cH))$.

The simple cycles of $G$ are in bijection with the nonloop graphic circuits of $Q_{\cH}$. 
A circuit $S$ corresponds to a cycle of length $2|S|$, and the subdivision 
vertices identify its original labeled edges uniquely. 
Applying the optimal cycle-listing algorithm of \citet{BirmeleEtAl2013} to 
each edge-containing connected component of $G$ lists all such cycles in total time
$O\bigl(L(\cH)+\sum_{S\in\MinStop(\cH)}|S|\bigr)$.
Decoding each cycle and outputting the apex loops directly give the stated sparse-output bound.

For bit-vector outputs, converting each minimal stopping set into the characteristic 
vector of its complementary coatom costs $\Theta(|V|)$ time. Since every circuit has size at most $|V|$, 
the cycle-listing and decoding costs are absorbed by $O(N|V|)$, giving total time $O(L(\cH)+N|V|)$.

The polynomial-delay and polynomial-space guarantees follow from the direct implementation 
given in Appendix~\ref{app:cycle-delay}.
\end{proof}

This positive result is specific to unconstrained enumeration.
The extension problem remains hard even when every element has
frequency exactly two.

\begin{theorem}
\label{thm:freq2-extension-hard}
The problem \HHEx{} is $\NP$-complete on three-uniform
hypergraphs in which every ground element has frequency exactly two.
The hardness holds with $P=\varnothing$.
\end{theorem}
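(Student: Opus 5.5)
The plan is to reduce from \textsc{Hamiltonian Cycle} in cubic graphs, which is $\NP$-complete (Garey, Johnson, and Tarjan), through the apex multigraph of \cref{prop:freq2-structure}. Membership in $\NP$ follows from \cref{lem:coatom-verification}, exactly as in \cref{thm:extension-23}.

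\textbf{Reading the target class as cubic graphs.} If $\cH$ is three-uniform and every element has frequency exactly two, no apex edges arise. In that case $Q_{\cH}$ is a $3$-regular multigraph whose vertices are the hyperedges and whose edges are the ground elements. Conversely, every simple cubic graph $G$ defines a hypergraph $\cH_G$ with ground set $E(G)$ and one hyperedge $\delta(v)$, the set of three edges at $v$, for each vertex $v$. This hypergraph is normalized, three-uniform, and of exact frequency two; the hyperedges are distinct because $G$ is simple. By \cref{prop:freq2-structure}, $\MinStop(\cH_G)$ is exactly the family of edge sets of simple cycles of $G$. With $P=\varnothing$, \HHEx{} therefore asks whether some cycle of $G$ contains every edge in a prescribed set $N$.

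\textbf{The triangle expansion.} Given a cubic graph $G$ with at least two vertices, replace each vertex $v$ by a triangle on corners $x_{v,a},x_{v,b},x_{v,c}$, one corner for each incident edge $a,b,c$. Each original edge $uv$ is attached to the corresponding corners at $u$ and $v$. The resulting graph $G'$ is simple and cubic. For each $v$, put one fixed triangle edge, say $x_{v,a}x_{v,b}$, into $N$. The instance is $\cH_{G'}$ with $P=\varnothing$ and this $N$.

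\textbf{Forward direction.} Suppose $G$ has a Hamiltonian cycle. At each vertex $v$ it uses two of $a,b,c$, and inside the triangle we route as follows:
\begin{itemize}
\item if it uses $a,b$, take the direct edge $x_{v,a}x_{v,b}$;
\item if it uses $a,c$, take the path $x_{v,a}x_{v,b}x_{v,c}$;
\item if it uses $b,c$, take the path $x_{v,b}x_{v,a}x_{v,c}$.
\end{itemize}
In every case the required edge is used, and the result is a simple cycle of $G'$ containing $N$.

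\textbf{Converse direction.} Let $Z$ be a simple cycle of $G'$ containing $N$. Fix a triangle and count degrees: each corner has cycle-degree $0$ or $2$, so the number of external edges of the triangle used by $Z$ is even, hence $0$ or $2$. It cannot be $0$. That would force $Z$ to be the triangle itself, which contradicts $N\subseteq Z$ because $|N|\ge2$. Hence $Z$ meets every triangle, and it crosses each triangle along a single internal path joining its two external edges. Contracting every triangle to its vertex therefore turns $Z$ into a cycle of $G$ through all vertices, that is, a Hamiltonian cycle.

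The construction is linear in size. I expect the main point needing care to be the converse direction: the parity argument that a simple cycle uses exactly two external edges at every triangle. The rest is bookkeeping with \cref{prop:freq2-structure}.
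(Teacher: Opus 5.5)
Your proposal is correct and follows essentially the same route as the paper. Both reduce from Hamiltonian Cycle in cubic graphs via the triangle expansion, put one distinguished triangle edge per source vertex into $N$ with $P=\varnothing$, and use \cref{prop:freq2-structure} to identify the minimal stopping sets with simple cycles. In the converse, your parity count of external edges at each triangle does the same job as the paper's observation that a cycle containing the distinguished edges cannot use all three triangle edges.
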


\begin{proof}
Membership in $\NP$ follows from
\cref{lem:coatom-verification}.

For hardness, reduce from Hamiltonian Cycle in planar cubic
$3$-connected graphs, which is $\NP$-complete
\citep{GareyJohnsonTarjan1976}.
Given such a graph $G$, 
apply the triangle expansion described in Appendix~\ref{app:freq2-extension}.
It produces a cubic graph $G^\triangle$ and a distinguished edge
$f_v$ for every $v\in V(G)$ such that $G$ has a Hamiltonian cycle if
and only if $G^\triangle$ has a simple cycle containing every
distinguished edge; see \cref{lem:triangle-expansion}.

Construct a hypergraph whose ground set is
$E(G^\triangle)$.
For every vertex $u\in V(G^\triangle)$, add the hyperedge consisting
of the three graph edges incident with $u$.
Because $G^\triangle$ is cubic, every hyperedge has size three.
Every ground element is a graph edge and therefore belongs to exactly
the two hyperedges corresponding to its endpoints.

The apex multigraph of this hypergraph is
$G^\triangle$ together with an isolated apex.
By \cref{prop:freq2-structure}, its minimal nonempty stopping sets are
exactly the simple cycles of $G^\triangle$.
Set $P=\varnothing$ and
$N:=\{f_v:v\in V(G)\}$.
By complement duality, an extending coatom exists exactly when there
is a minimal stopping set containing $N$.
This is equivalent to the existence of a simple cycle of
$G^\triangle$ containing every distinguished edge, and hence to the
existence of a Hamiltonian cycle in $G$.
The reduction is polynomial.
\end{proof}

The preceding results give a sharp separation between enumeration and
extension on the same restricted input class.

\begin{corollary}
\label{cor:freq2-same-class}
For three-uniform hypergraphs of exact element frequency two, 
\HHEnum{} admits output-linear total-time enumeration as well as a polynomial-delay, 
polynomial-space algorithm, whereas \HHEx{} is $\NP$-complete.
\end{corollary}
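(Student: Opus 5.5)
This corollary combines \cref{cor:freq2-enum} and \cref{thm:freq2-extension-hard}, so I would prove it by checking that both results apply to the same input class. Let $\mathcal K$ denote the class of three-uniform hypergraphs in which every ground element has frequency exactly two.

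For the enumeration half, I would first observe that every $\cH\in\mathcal K$ satisfies $\Delta(\cH)\le2$. Hence \cref{cor:freq2-enum} applies directly and gives the following guarantees:
\begin{itemize}
\item output-linear total time, namely $O\bigl(L(\cH)+\sum_{S\in\MinStop(\cH)}(|S|+1)\bigr)$ for sparse complements, or $O(L(\cH)+N|V|)$ for bit vectors;
\item polynomial delay with polynomial space, via the implementation in Appendix~\ref{app:cycle-delay}.
\end{itemize}
The input normalization assumed there is harmless. On $\mathcal K$ there are no empty or singleton hyperedges. Frequency is measured after duplicate hyperedges are removed, so the class is stated in normalized form. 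Also, since every element lies in exactly two hyperedges, the apex $\alpha$ of $Q_{\cH}$ is isolated. The circuits of $Q_{\cH}$ are therefore the parallel pairs and simple cycles of an ordinary multigraph, with no apex loops to output.

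For the extension half, I would check that the instances built in the proof of \cref{thm:freq2-extension-hard} lie in $\mathcal K$:
\begin{itemize}
\item Each hyperedge is the set of three edges at a vertex of the cubic graph $G^\triangle$, so the hypergraph is three-uniform.
\item Each ground element is an edge of $G^\triangle$ and lies in exactly the two hyperedges of its endpoints, so every frequency is exactly two.
\end{itemize}
One point needs attention. Two distinct vertices of $G^\triangle$ must not share the same set of three incident edges, since otherwise removing a duplicate hyperedge would lower frequencies and move the instance out of $\mathcal K$. This is where I expect the only real care to be required. It would happen only if two vertices were joined by a triple parallel edge. I would rule this out using the form of the triangle expansion in Appendix~\ref{app:freq2-extension}, which replaces each vertex by a triangle and is applied to a simple $3$-connected cubic graph. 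Membership in $\NP$ is inherited from \cref{lem:coatom-verification}.

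Putting the two halves together, both statements hold on the identical class $\mathcal K$, which is exactly what the corollary asserts.
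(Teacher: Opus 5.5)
Your proposal is correct and matches the paper's proof, which cites \cref{cor:freq2-enum} for the enumeration half and \cref{thm:freq2-extension-hard} for the extension half. Your extra check that the reduction produces no duplicate hyperedges is a harmless refinement the paper leaves implicit. It holds immediately: each port of a triangle gadget is joined by triangle edges to two distinct other ports, so $G^\triangle$ has no triple parallel edge.
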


\begin{proof}
The enumeration statement follows from
\cref{cor:freq2-enum}, and the extension statement is
\cref{thm:freq2-extension-hard}.
\end{proof}

In Tanner-graph terms, the three-uniform exact-frequency-two 
class consists of incidence graphs in which every check node has 
degree three and every variable node has degree two. 
Thus complete coatom enumeration has output-linear total time on this class 
and also admits a polynomial-delay, polynomial-space implementation, 
whereas constrained extension is $\NP$-complete.

\subsection{Three-uniform hardness at maximum frequency three}
\label{sec:uniformity}

The lower bound in \cref{cor:rank3-freq3} applies to hypergraphs whose
hyperedges have size two or three and whose maximum element frequency
is at most three.
It remains to consider the case in which every hyperedge has size
exactly three.
We show that every two-element hyperedge can be replaced by four
three-element hyperedges.
This preserves the original minimal stopping sets and introduces
three known minimal stopping sets using only new elements.

Let $p,q$ be two elements, and let
$J:=\{a,b,c,d,f\}$ consist of five fresh elements.
Define
$\mathcal W(p,q):=
\{\{p,q,a\},\{a,d,f\},\{b,c,d\},\{b,c,f\}\}$.
Also let $D_1:=\{b,c\}$, $D_2:=\{b,d,f\}$, and
$D_3:=\{c,d,f\}$.

\begin{lemma}
\label{lem:pair-edge-replacement}
Let $S$ be a stopping set of $\mathcal W(p,q)$.
If $S\cap J=\varnothing$, then $p\in S$ if and only if $q\in S$.
If $S\cap J\ne\varnothing$, then $D_i\subseteq S$ for some
$i\in\{1,2,3\}$.
Consequently, the minimal nonempty stopping sets of
$\mathcal W(p,q)$ are exactly
$\{p,q\}$, $D_1$, $D_2$, and $D_3$.
\end{lemma}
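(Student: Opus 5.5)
I will argue directly in Boolean coordinates, writing each element of $J\cup\{p,q\}$ for its membership bit in $S$, and checking the four hyperedges of $\mathcal W(p,q)$ one at a time. The edge $\{p,q,a\}$ is the only one containing $p$ or $q$, so the whole proof rests on how $a$ is forced. In the remaining three hyperedges, $\{a,d,f\}$, $\{b,c,d\}$ and $\{b,c,f\}$, the only variables are those of $J$.

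\emph{First claim.} If $S\cap J=\varnothing$, then $a=0$. The edge $\{p,q,a\}$ then meets $S$ in $p+q$ elements, so $p+q\ne1$, which means $p=q$. The other three edges meet $S$ in no element and impose nothing.

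\emph{Second claim.} Suppose $S\cap J\ne\varnothing$. I split on whether $b=c=1$.
\begin{itemize}
\item If $b=c=1$, then $D_1\subseteq S$ and we are done.
\item If $b=1$ and $c=0$, the edges $\{b,c,d\}$ and $\{b,c,f\}$ force $d=f=1$, so $D_2\subseteq S$.
\item If $b=0$ and $c=1$, the same two edges force $d=f=1$, so $D_3\subseteq S$.
\item If $b=c=0$, the edges $\{b,c,d\}$ and $\{b,c,f\}$ force $d=0$ and $f=0$. Then $\{a,d,f\}$ forces $a=0$, so $S\cap J=\varnothing$, a contradiction.
\end{itemize}
This exhausts the cases.

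\emph{Minimal sets.} I would verify that each of $\{p,q\}$, $D_1$, $D_2$, $D_3$ is a stopping set by checking its intersection size with the four edges.
\begin{itemize}
\item $\{p,q\}$ meets $\{p,q,a\}$ in two elements and the other edges in none.
\item $D_1$ meets $\{b,c,d\}$ and $\{b,c,f\}$ in two elements each, and the other two edges in none.
\item $D_2$ meets $\{a,d,f\}$ in $2$ elements, $\{b,c,d\}$ in $2$, $\{b,c,f\}$ in $2$, and $\{p,q,a\}$ in $0$.
\item $D_3$ behaves symmetrically to $D_2$.
\end{itemize}

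Next, every nonempty stopping set contains one of these four sets. If $S$ meets $J$, the second claim gives some $D_i\subseteq S$. Otherwise $S\subseteq\{p,q\}$ is nonempty, so by the first claim $S=\{p,q\}$.

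Minimality then follows because no listed set properly contains another. $\{p,q\}$ is disjoint from $J$, $D_1\not\subseteq D_2$ because $c\notin D_2$, and similarly for the other pairs. Hence a proper nonempty stopping subset of a listed set would have to contain a different listed set inside it, which is impossible. Combining these facts, the minimal nonempty stopping sets are exactly the four listed sets.

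There is no real obstacle here. The only point needing care is the case $b=c=0$: the edge $\{a,d,f\}$ is what propagates the zero back to $a$. Without that step, $a$ alone could be selected and $\{p,q,a\}$ would permit stopping sets in which $p\ne q$.
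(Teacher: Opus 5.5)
Your proof is correct and uses the same direct case analysis on membership bits as the paper. The paper branches first on $a$ and then on $(d,f)$, whereas you branch on $(b,c)$, so the edge $\{a,d,f\}$ is needed only to exclude $b=c=0$; and instead of checking the nonempty proper subsets of $\{p,q\},D_1,D_2,D_3$ directly, you get minimality from the facts that every nonempty stopping set contains one of these four sets and that they form an antichain, which is a slightly cleaner finish.
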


\begin{proof}
If $S\cap J=\varnothing$, then the hyperedge $\{p,q,a\}$ implies that
$p$ and $q$ have the same membership in $S$.

Suppose that $S\cap J\ne\varnothing$, and identify each element with
its membership bit in $S$.
If $a=0$, then the hyperedge $\{a,d,f\}$ gives $d=f$.
If $d=f=0$, the last two triples force $b=c$. 
Since $S\cap J\ne\varnothing$ and $a=d=f=0$, this common value must be one, 
and hence $D_1\subseteq S$.
If $d=f=1$, they require at least one of $b,c$, so $S$ contains
$D_2$ or $D_3$.

If $a=1$, then at least one of $d,f$ belongs to $S$.
If exactly one belongs to $S$, the last two triples force $b=c=1$,
so $D_1\subseteq S$.
If both belong to $S$, at least one of $b,c$ belongs to $S$, so
$S$ contains $D_2$ or $D_3$.

The sets $\{p,q\},D_1,D_2,D_3$ are stopping sets, and a direct check
of their nonempty proper subsets shows that they are minimal.
The preceding argument shows that every other nonempty stopping set
contains one of these four sets.
\end{proof}

Let $\cH$ be a normalized hypergraph on $V$ whose hyperedges have
size two or three, and let
$E_i:=\{A\in\cH:|A|=i\}$ for $i\in\{2,3\}$.
For each hyperedge $e=\{p_e,q_e\}\in E_2$, introduce a fresh set
$J_e:=\{a_e,b_e,c_e,d_e,f_e\}$, delete $e$, and add
$\{p_e,q_e,a_e\}$, $\{a_e,d_e,f_e\}$,
$\{b_e,c_e,d_e\}$, and $\{b_e,c_e,f_e\}$.
The sets $J_e$ are chosen pairwise disjoint.
Retain all hyperedges in $E_3$, and denote the resulting hypergraph
by $\cH^\triangle$.
For each $e\in E_2$, let
$D_{e,1}:=\{b_e,c_e\}$,
$D_{e,2}:=\{b_e,d_e,f_e\}$, and
$D_{e,3}:=\{c_e,d_e,f_e\}$.
Write
$\mathcal D(\cH):=
\{D_{e,i}:e\in E_2,\ i\in\{1,2,3\}\}$.

\begin{theorem}
\label{thm:three-uniform-replacement}
The hypergraph $\cH^\triangle$ is normalized and three-uniform, and
\begin{equation}
  \operatorname{MinStop}(\cH^\triangle)
  =
  \operatorname{MinStop}(\cH)
  \mathbin{\dot\cup}
  \mathcal D(\cH).
  \label{eq:three-uniform-minstop}
\end{equation}
Here the members of $\operatorname{MinStop}(\cH)$ are viewed as
subsets of the enlarged ground set.

If $m_i:=|E_i|$, then $\cH^\triangle$ has
$|V|+5m_2$ elements and $4m_2+m_3$ hyperedges, and
$L(\cH^\triangle)=|V|+21m_2+4m_3=O(L(\cH))$.
The frequency of every element of $V$ is unchanged, while every new
element has frequency two.
Consequently,
$\Delta(\cH^\triangle)\le\max\{\Delta(\cH),2\}$.
The construction takes $O(L(\cH))$ time.
\end{theorem}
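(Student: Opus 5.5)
The plan is to analyze an arbitrary stopping set $T$ of $\cH^\triangle$ through its traces on the disjoint pieces $V$ and $J_e$ ($e\in E_2$). Since every hyperedge of $\cH^\triangle$ either lies in $E_3$ (a subset of $V$) or belongs to a single gadget $\mathcal W(p_e,q_e)$ (a subset of $\{p_e,q_e\}\cup J_e$), the stopping-set condition decomposes. $T$ is a stopping set of $\cH^\triangle$ if and only if two things hold: $T\cap V$ satisfies every edge of $E_3$, and, for each $e$, $T\cap(\{p_e,q_e\}\cup J_e)$ is a stopping set of $\mathcal W(p_e,q_e)$. I will then invoke \cref{lem:pair-edge-replacement} gadget by gadget.

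The containment ``$\supseteq$'' comes first. Each $D_{e,i}$ lies inside $J_e$ and is disjoint from $V$ and from the other gadgets. It is therefore a stopping set of $\cH^\triangle$ because it is one of $\mathcal W(p_e,q_e)$. Its nonempty proper subsets fail already inside that gadget, by the minimality part of the lemma. Now take $S\in\MinStop(\cH)$, viewed with $S\cap J_e=\varnothing$ for all $e$. The $E_3$ edges are unchanged. For $e\in E_2$, $S\cap J_e=\varnothing$ and $|S\cap\{p_e,q_e\}|\ne1$. Hence the edge $\{p_e,q_e,a_e\}$ meets $S$ in $0$ or $2$ elements, and the other three gadget edges meet it in $0$. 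So $S$ is a stopping set. For minimality, a nonempty proper subset $S'\subsetneq S$ is still disjoint from all $J_e$. If it were a stopping set of $\cH^\triangle$, the lemma's first clause would give $p_e\in S'\iff q_e\in S'$, so $S'$ would be a stopping set of $\cH$, contradicting minimality. The union is disjoint because members of $\mathcal D(\cH)$ meet some $J_e$ and members of $\MinStop(\cH)$ do not.

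The containment ``$\subseteq$'' is the main step. Let $T\in\MinStop(\cH^\triangle)$. If $T\cap J_e\ne\varnothing$ for some $e$, then by the lemma the trace contains some $D_{e,i}$, which is itself a nonempty stopping set of $\cH^\triangle$. Minimality forces $T=D_{e,i}$. Otherwise $T\subseteq V$, and the lemma's first clause shows that $T$ respects every $\{p_e,q_e\}$. Hence $T\in\Stop(\cH)$ and $T\ne\varnothing$. Conversely, any stopping set $S\subseteq V$ of $\cH$ lifts to a stopping set of $\cH^\triangle$, as shown above. So if $T$ properly contained a nonempty stopping set of $\cH$, that set would contradict the minimality of $T$ in $\cH^\triangle$. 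Thus $T\in\MinStop(\cH)$. The main obstacle is this decomposition: one has to check that the gadget edges involving $V$ are only $\{p_e,q_e,a_e\}$, so that the lemma's case split applies independently to each gadget.

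Finally, the counting and normalization claims. Each two-element hyperedge is replaced by four triples with $12$ incidences in place of $2$. This gives $|V|+5m_2$ elements, $4m_2+m_3$ hyperedges, and
$L=(|V|+5m_2)+(4m_2+m_3)+(12m_2+3m_3)=|V|+21m_2+4m_3$.
Within a gadget, $a$ lies in two triples and each of $b,c,d,f$ lies in two triples. Each $p_e$ (and $q_e$) loses one incidence, namely the edge $e$, and gains exactly one, namely $\{p_e,q_e,a_e\}$, so frequencies on $V$ are unchanged. Normalization holds because the new triples contain fresh, pairwise-disjoint gadget elements, which makes them distinct from each other and from the retained $E_3$ edges, which lie inside $V$. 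The construction is a single linear pass over the edges.
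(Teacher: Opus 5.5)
Your proposal is correct and follows essentially the same route as the paper. Both arguments first establish the equivalence $S\in\Stop(\cH^\triangle)\Leftrightarrow S\in\Stop(\cH)$ for $S\subseteq V$; you obtain it via the first clause of \cref{lem:pair-edge-replacement}, while the paper reads it off from $|\{p_e,q_e,a_e\}\cap S|=|e\cap S|$. Both then apply that lemma gadget by gadget to force $T=D_{e,i}$ whenever $T$ meets some $J_e$, and both use the same counting for sizes, frequencies, and normalization.
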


\begin{proof}
Consider first a set $S\subseteq V$, so that $S$ contains no new
element.
For each $e=\{p_e,q_e\}\in E_2$, the hyperedge
$\{p_e,q_e,a_e\}$ meets $S$ in exactly $|e\cap S|$ elements, while
the other three hyperedges associated with $e$ are disjoint from
$S$.
The hyperedges in $E_3$ are unchanged.
Therefore,
$S\in\operatorname{Stop}(\cH^\triangle)$ if and only if
$S\in\operatorname{Stop}(\cH)$.
By \cref{lem:pair-edge-replacement}, each $D_{e,i}$ is a minimal
stopping set for the four hyperedges associated with $e$.
It is disjoint from every retained hyperedge and from every
hyperedge associated with a different member of $E_2$.
Thus it is also a minimal stopping set of $\cH^\triangle$.

Let $T$ be a minimal nonempty stopping set of $\cH^\triangle$.
Suppose that $T\cap J_e\ne\varnothing$ for some $e\in E_2$.
Then
$T\cap(\{p_e,q_e\}\cup J_e)$ is a stopping set of
$\mathcal W(p_e,q_e)$ with nonempty intersection with $J_e$.
By \cref{lem:pair-edge-replacement}, it contains some $D_{e,i}$.
Since $D_{e,i}$ is itself a nonempty stopping set of
$\cH^\triangle$, the minimality of $T$ implies $T=D_{e,i}$.
If $T\cap J_e=\varnothing$ for every $e\in E_2$, then $T\subseteq V$.
The equivalence
$S\in\operatorname{Stop}(\cH^\triangle)$ if and only if
$S\in\operatorname{Stop}(\cH)$ for $S\subseteq V$ shows that
$T$ is a stopping set of $\cH$.
If $T$ were not minimal in $\cH$, there would be a nonempty set
$S\subsetneq T$ with $S\in\operatorname{Stop}(\cH)$.
Since $S\subseteq V$, the same equivalence would give
$S\in\operatorname{Stop}(\cH^\triangle)$, contradicting the
minimality of $T$.
Therefore $T\in\operatorname{MinStop}(\cH)$.

Conversely, let $S\in\operatorname{MinStop}(\cH)$.
Since $S\subseteq V$, the same equivalence gives
$S\in\operatorname{Stop}(\cH^\triangle)$.
If a nonempty proper subset $R\subsetneq S$ were a stopping set of
$\cH^\triangle$, then $R\subseteq V$ and hence
$R\in\operatorname{Stop}(\cH)$, contradicting the minimality of $S$.
Thus $S\in\operatorname{MinStop}(\cH^\triangle)$.
This proves \eqref{eq:three-uniform-minstop}.

Each replaced two-element hyperedge introduces five elements and four
three-element hyperedges, which gives the stated size formulas.
Because the new elements are fresh, no duplicate hyperedge is
introduced.
Each occurrence of an original element in a two-element hyperedge is
replaced by one occurrence in a three-element hyperedge, while all
other occurrences are unchanged.
Thus every original frequency is preserved.
Each new element occurs in exactly two hyperedges.
\end{proof}

We now apply \cref{thm:three-uniform-replacement} to the hard instances
used in \cref{cor:rank3-freq3}.
Since the replacement does not increase maximum element frequency, we
obtain the following.

\begin{corollary}
\label{cor:three-uniform-freq3}
Unless $\mathsf P=\mathsf{NP}$, \HHEnum{} is not in $\OutputP$ for
three-uniform hypergraphs of maximum element frequency at most three.
\end{corollary}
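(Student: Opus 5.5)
The plan is a contradiction argument: assume an output-polynomial enumerator $\mathcal A$ for three-uniform hypergraphs of maximum frequency at most three, and use it to build an output-polynomial enumerator for the instances of \cref{cor:rank3-freq3}. That would contradict the lower bound stated there, which ultimately rests on \cref{lem:kss-family-hard}.

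Let $\cH$ be one of those hard instances. Every hyperedge of $\cH$ has size two or three and $\Delta(\cH)\le3$. First construct $\cH^\triangle$ in $O(L(\cH))$ time. By \cref{thm:three-uniform-replacement}, $\cH^\triangle$ is normalized and three-uniform, satisfies $\Delta(\cH^\triangle)\le\max\{\Delta(\cH),2\}\le3$, and has $L(\cH^\triangle)=O(L(\cH))$. So $\cH^\triangle$ is a legal input for $\mathcal A$. Run $\mathcal A$ on it and convert each output coatom $K$ into its minimal stopping set $T=V(\cH^\triangle)\setminus K$.

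By \eqref{eq:three-uniform-minstop}, each such $T$ is either a member of $\MinStop(\cH)$ or one of the $3m_2$ auxiliary sets $D_{e,i}$. Filter them as follows. If $T\cap\bigcup_e J_e\neq\varnothing$, then $T\in\mathcal D(\cH)$, and we discard it. Otherwise $T\subseteq V$, $T\in\MinStop(\cH)$, and we output the coatom $V\setminus T$ of $\Phi_{\cH}$. This test takes linear time per output, and by the disjoint union in \eqref{eq:three-uniform-minstop} every source coatom is produced exactly once.

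The only point needing care is the output-size accounting, and it is immediate. The discarded outputs number exactly $3m_2\le3|\cH|$, each encoded in $O(L(\cH))$ bits, so their total cost is polynomial in the input size alone. The kept outputs have encoding length $|V(\cH^\triangle)|=O(L(\cH))$ per output, which is polynomial in the source output length whenever there is at least one output. (If there are none, only the auxiliary outputs are produced.) Hence the total running time of the simulated procedure is polynomial in $L(\cH)$ plus the total source output length. That is an output-polynomial enumerator for the hard family of \cref{cor:rank3-freq3}, which is impossible unless $\mathsf P=\mathsf{NP}$.
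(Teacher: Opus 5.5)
Your proposal is correct and follows the paper's proof essentially step for step: you build $\cH^\triangle$ via \cref{thm:three-uniform-replacement}, run the assumed enumerator, discard the $3|E_2|$ explicitly known sets of $\mathcal D(\cH)$ (your test $T\cap\bigcup_e J_e\ne\varnothing$ is a valid way to recognize them), and check that the input and output lengths of the new instance are polynomially bounded by those of the instance from \cref{cor:rank3-freq3}. One minor point of phrasing: the per-output length $O(L(\cH))$ is polynomial in the input length, and it is the number of kept outputs, not their encoded length, that is bounded by the source output length; your final sentence does combine the two bounds correctly.
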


\begin{proof}
By \cref{prop:complement-duality}, it is enough to consider minimal
stopping sets.
Let $\cH$ be an instance from \cref{cor:rank3-freq3}, and construct
$\cH^\triangle$.
By \cref{thm:three-uniform-replacement}, the resulting hypergraph is
three-uniform, has maximum element frequency at most three, and has
incidence length linear in $L(\cH)$.

Suppose that minimal stopping sets of such hypergraphs could be
enumerated in output-polynomial time.
Apply the algorithm to $\cH^\triangle$, discard the sets in
$\mathcal D(\cH)$, and output every other set.
By \eqref{eq:three-uniform-minstop}, this enumerates exactly
$\operatorname{MinStop}(\cH)$.

Only $3|E_2|$ outputs are discarded, and the enlarged ground set has
size $|V|+5|E_2|$.
Under the bit-vector output convention, the input and output length
of the new instance is polynomially bounded by the input and output
length of the original instance.
This would give an output-polynomial algorithm for the instances of
\cref{cor:rank3-freq3}, a contradiction.
\end{proof}

%
%
%

The same replacement also transfers the incremental lower bound.

\begin{corollary}
\label{cor:three-uniform-incremental}
The problem \IncHH{} is $\NP$-complete for three-uniform hypergraphs 
of maximum element frequency at most three. 
The hardness also holds for the promise version.
\end{corollary}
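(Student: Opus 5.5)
Membership in $\NP$ is inherited from \cref{thm:incremental-rank3}: a certificate is a coatom missing from the list, and it is checked together with every listed set by \cref{lem:coatom-verification}. For hardness, I will start from the promise instances of \cref{cor:rank3-freq3}. Each such instance is a hypergraph $\cH$ with hyperedges of size two or three and maximum frequency at most three, together with a list $\mathcal L=(M_1,\ldots,M_t)$ of coatoms of $\Mod(\Phi_{\cH})$. It is a yes-instance exactly when $\Phi_{\cH}$ has a coatom not in $\mathcal L$. The plan is to apply the replacement of \cref{thm:three-uniform-replacement} and translate the list.

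Let $V^\triangle:=V\cup\bigcup_{e\in E_2}J_e$ denote the enlarged ground set. Each source coatom $M_i$ has minimal stopping set $S_i=V\setminus M_i\in\MinStop(\cH)$. By \eqref{eq:three-uniform-minstop}, $S_i$ is also a minimal stopping set of $\cH^\triangle$, so $V^\triangle\setminus S_i$ is a target coatom. I map $M_i$ to this set, namely $M_i\cup\bigcup_{e}J_e$. I then append the $3|E_2|$ auxiliary coatoms $V^\triangle\setminus D_{e,j}$. All listed sets are pairwise distinct coatoms of $\Phi_{\cH^\triangle}$, because the union in \eqref{eq:three-uniform-minstop} is disjoint and complementation is injective. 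Hence the promise is preserved.

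Correctness follows from \eqref{eq:three-uniform-minstop} and \cref{prop:complement-duality}. The coatoms of $\Phi_{\cH^\triangle}$ are precisely the complements of the members of $\MinStop(\cH)\mathbin{\dot\cup}\mathcal D(\cH)$. Since all auxiliary ones are listed, the target list omits a coatom if and only if some member of $\MinStop(\cH)$ is unlisted. This happens if and only if $\mathcal L$ omits a source coatom. Hence yes-instances map to yes-instances and no-instances to no-instances.

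The size bounds come from \cref{thm:three-uniform-replacement}. The target hypergraph is three-uniform, has $\Delta\le\max\{3,2\}=3$, and has linear incidence length. The new list has $t+3|E_2|$ members of bit length $|V|+5|E_2|$, so the whole reduction is polynomial. The only point needing care, rather than a real obstacle, is to include the explicitly known auxiliary coatoms in the list. If they were omitted, every instance would become a yes-instance whenever $E_2\neq\varnothing$. Including them also keeps the reduction valid for the promise version, since every listed set is a genuine coatom.
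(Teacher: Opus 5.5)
Your proposal is correct and is essentially the paper's proof: it uses the same replacement $\cH\mapsto\cH^\triangle$, the same translation $M\mapsto M\cup\bigcup_{e\in E_2}J_e$, and the same appended list of the $3|E_2|$ auxiliary coatoms $V(\cH^\triangle)\setminus D_{e,i}$. The only difference is that the paper starts from arbitrary instances and sends lists containing a noncoatom to a fixed no-instance, whereas you reduce only from promise instances; this suffices because those instances already carry the hardness of \cref{cor:rank3-freq3}, and the promise and unrestricted versions agree on the promise-satisfying target instances you produce.
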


\begin{proof}
Given an instance $(\cH,\mathcal L)$ covered by
\cref{cor:rank3-freq3}, first consider the unrestricted version.
Using \cref{lem:coatom-verification}, test in polynomial time whether
every member of $\mathcal L$ is a coatom.
If the list is invalid, map the input to the following fixed no-instance.
Let $\cH_0$ have ground set $\{a,b,c\}$ and the single hyperedge
$\{a,b,c\}$, and let $\mathcal L_0=(\varnothing)$.
The hypergraph $\cH_0$ is three-uniform and has maximum element
frequency one, while $\varnothing$ is not a coatom of
$\Phi_{\cH_0}$.
Hence $(\cH_0,\mathcal L_0)$ is a no-instance of
\IncHH{} in the required class.
This validity test and the fixed-instance branch are unnecessary for
the promise version.

Suppose now that every member of $\mathcal L$ is a coatom.
Construct $\cH^\triangle$ as in
\cref{thm:three-uniform-replacement}.
For a source coatom $M=V\setminus S$, define
\(
M^\triangle
  :=V(\cH^\triangle)\setminus S
   =M\cup\bigcup_{e\in E_2}J_e .
\)
By \cref{thm:three-uniform-replacement}, this maps the coatoms of
$\Phi_{\cH}$ bijectively onto the coatoms of
$\Phi_{\cH^\triangle}$ whose complementary minimal stopping sets lie
in $\MinStop(\cH)$.

Map every member of $\mathcal L$ by
$M\mapsto M^\triangle$, and append the coatoms
$V(\cH^\triangle)\setminus D_{e,i}$ for all
$e\in E_2$ and $i\in\{1,2,3\}$.
By \cref{eq:three-uniform-minstop}, the resulting list omits a coatom
of $\cH^\triangle$ if and only if the original list omits a coatom of
$\cH$.
The construction is polynomial, and membership in $\NP$ holds in
general.
\end{proof}


In Tanner-graph terms, the final hard instances have check-node 
degree exactly three and variable-node degree at most three. 
Unless $\mathsf P=\mathsf{NP}$, complete coatom enumeration 
is not in $\OutputP$ on this class. Moreover, \IncHH{} is $\NP$-complete, 
and its hardness persists when the input family is promised to contain only coatoms. 
Together with the frequency-two result above, this gives, 
assuming $\mathsf P\ne\mathsf{NP}$, the exact threshold between maximum 
variable-node degrees two and three for arbitrary-order enumeration 
when every check node has degree three.

\section{Conclusion}
\label{sec:conclusion}

For every normalized Horn CNF $\Gamma$, we have constructed in linear time a
hypergraph $\cC(\Gamma)$ of rank at most three such that
\(
  \Mod(\Gamma)
  \cong
  \Mod(\Phi_{\cC(\Gamma)})\setminus\{U_\Gamma\}
\)
under inclusion.
Each source model $M$ has a unique proper target model $K$ satisfying 
$K\cap X=M$, and maximal models of $\Gamma$ correspond 
bijectively to the coatoms in $\cM(\Phi_{\cC(\Gamma)})$.

This representation transfers the maximal-Horn-model lower bound to
\HHEnum{}.
Unless $\mathsf P=\mathsf{NP}$, the problem is not in $\OutputP$ even
when $|A|\in\{2,3\}$ for every input hyperedge $A$.
The incidence-splitting transformation preserves the inclusion order
of all stopping sets and reduces maximum element frequency to three.
Consequently, the same lower bound holds simultaneously under
$|A|\in\{2,3\}$ and $\Delta(\cH)\le3$.
The adjacent positive cases show that these bounds are conditionally
sharp for arbitrary-order enumeration.
If $\rank(\cH)\le2$, then \HHEnum{} admits output-linear total time
after normalization.
If $\Delta(\cH)\le2$, then \HHEnum{} admits output-linear total-time enumeration 
through the graphic-circuit representation of $\MinStop(\cH)$; 
it also belongs to $\DelayP$ and admits a polynomial-space algorithm.

The complexity changes when additional requirements are imposed on
the output.
A member of $\cM(\Phi_{\cH})$ can be found,
or its nonexistence can be detected, in polynomial time,
whereas deciding whether there exists
$M\in\cM(\Phi_{\cH})$ with $|V\setminus M|\le k$ is
$\NP$-complete.
The problems \IncHH{} and \HHEx{} are also $\NP$-complete.
In particular, on three-uniform hypergraphs with $\Delta(\cH)=2$, 
\HHEnum{} admits output-linear total-time enumeration and 
also belongs to $\DelayP$ with polynomial space, 
while \HHEx{} remains $\NP$-complete.

Replacing every remaining two-element hyperedge by the local
three-uniform construction preserves all original minimal stopping
sets and introduces only a polynomial number of explicitly known
additional outputs.
Consequently, unless $\mathsf P=\mathsf{NP}$, \HHEnum{} is not in
$\OutputP$ even for three-uniform hypergraphs of maximum element
frequency at most three.

In Tanner-graph terms, the final hard instances have check-node degree
exactly three and variable-node degree at most three.
By contrast, variable-node degree at most two admits output-linear total-time 
enumeration and also a polynomial-delay, polynomial-space implementation, 
even when every check node has degree three.
Thus, assuming $\mathsf P\ne\mathsf{NP}$, the threshold for arbitrary-order 
enumeration on check-degree-three Tanner graphs lies exactly between 
maximum variable-node degrees two and three. Constrained extension 
behaves differently: it remains $\NP$-complete already for $(2,3)$-regular Tanner graphs.

The present three-uniform constructions are not linear, 
since some pairs of hyperedges share two elements 
and create $4$-cycles in the Tanner graph. 
The present results leave open whether the same lower bound persists 
for linear three-uniform hypergraphs, equivalently for Tanner graphs 
of girth at least six, and whether it extends to $(3,3)$-regular Tanner graphs.

\appendix

\section{A Horn formulation of the KSS hard family}
\label{app:kss}

The lower bound for maximal Horn-model enumeration used in this paper
is due to \citet{KavvadiasSideriStavropoulos2000}.
For completeness, this appendix restates 
the positive one-in-three reduction underlying their result 
in the notation used here and verifies the properties 
used throughout \cref{sec:hardness-transfer}.
The source problem is \textsc{Positive One-in-Three 3SAT}, which is $\NP$-complete \citep{Schaefer1978}.
Let $F=\bigwedge_{i=1}^{m}C_i$, where
$C_i=\{x_{i,1},x_{i,2},x_{i,3}\}$ and the three variables in each
clause are distinct.
Writing $X_F$ for the variable set of $F$, an assignment
$M\subseteq X_F$ satisfies $F$ if
$|M\cap C_i|=1$ for every $i\in[m]$.
We delete variables occurring in no clause.
This does not change satisfiability and ensures that every source
variable occurs in at least one clause.

For each clause $C_i$, introduce a variable $y_i$, and introduce one
additional variable $z$.
The Horn CNF $\Gamma_F$ contains the following clauses for every
$i\in[m]$:
the three pair clauses
$\{x_{i,1},x_{i,2}\}\to z$,
$\{x_{i,2},x_{i,3}\}\to z$, and
$\{x_{i,3},x_{i,1}\}\to z$;
the three gate clauses
$\{z,x_{i,j}\}\to y_i$ for $j\in[3]$;
and the back clause $\{y_i\}\to z$.
Finally, it contains the negative clause
$\{y_1,\ldots,y_m,z\}\to\bot$.
All clause premises are nonempty.

For each $i\in[m]$, define
$T_i:=\{z\}\cup\{y_j:j\ne i\}\cup(X_F\setminus C_i)$.
Thus $T_i$ sets $y_i$ and the three variables of $C_i$ to false and
sets every other variable to true.

\begin{lemma}
\label{lem:kss-z1}
The maximal models of $\Gamma_F$ containing $z$ are exactly
$T_1,\ldots,T_m$.
\end{lemma}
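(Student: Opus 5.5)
The plan has two parts. First, each $T_i$ is a model of $\Gamma_F$. Second, every model containing $z$ is contained in some $T_i$. Together these give the claim. Indeed, any maximal model $M$ containing $z$ then lies below some $T_i$, so $M=T_i$ by maximality. Conversely, each $T_i$ is maximal, because any model $M\supseteq T_i$ contains $z$, hence $M\subseteq T_j$ for some $j$, which forces $T_i\subseteq T_j$. The $T_i$ are pairwise incomparable: for $i\ne j$, $T_i$ contains $y_j$ while $T_j$ does not. So $T_i\subseteq T_j$ forces $i=j$ and $M=T_i$.

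To verify that $T_i$ is a model, we check the clause families one at a time.

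\begin{itemize}
\item \emph{Negative clause.} It holds because $y_i\notin T_i$.
\item \emph{Back clauses.} These hold because $z\in T_i$.
\item \emph{Gate clauses for $C_i$.} These are vacuous, since no variable of $C_i$ is true.
\item \emph{Gate clauses for $C_j$, $j\ne i$.} These hold because $y_j\in T_i$.
\item \emph{Pair clauses.} These hold because $z$ is true.
\end{itemize}

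So the verification is immediate.

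For the containment step, let $M$ be a model with $z\in M$. The negative clause gives some $i$ with $y_i\notin M$. For that $i$, the gate clauses $\{z,x_{i,j}\}\to y_i$ together with $z\in M$ force $x_{i,j}\notin M$ for $j\in[3]$. Therefore $M$ omits $y_i$ and all of $C_i$. Since $T_i$ makes exactly these four variables false and everything else true, we get $M\subseteq T_i$.

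There is no real obstacle here. The only point needing care is the maximality direction, which reduces to the incomparability of the $T_i$. That incomparability uses only the distinct $y$-coordinates and does not need the assumption that every variable occurs in some clause. The case $m=0$ is degenerate, since then the negative clause is $\{z\}\to\bot$. In that case no model contains $z$ and the family of $T_i$ is empty, so the statement holds vacuously.
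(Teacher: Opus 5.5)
Your proof is correct. The containment step matches the paper's first half exactly: the negative clause gives some $y_i\notin M$, and the gate clauses for $C_i$ then exclude all of $C_i$, so $M\subseteq T_i$.

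You depart from the paper in proving that each $T_i$ is maximal. The paper checks this locally. A strict extension of $T_i$ must add $y_i$ or a variable of $C_i$, and either choice ends up violating the negative clause or a gate clause for $C_i$.

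You instead reuse the containment step. You note that the $T_i$ form an antichain, since $y_j\in T_i\setminus T_j$ for $i\ne j$. The general fact that a dominating antichain of models is exactly the set of maximal elements then finishes the argument. Your route avoids a second clause-level case analysis and shows that maximality follows from the domination property alone. The paper's check is more local: it uses only the negative clause and the gate clauses for $C_i$, without reference to the other $T_j$.

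You also give the clause-by-clause verification that $T_i$ is a model, which the paper only asserts. And you handle the degenerate case $m=0$, which the paper leaves implicit.
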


\begin{proof}
Let $T$ be a model containing $z$.
The negative clause forces $y_i\notin T$ for some $i$.
The three gate clauses for $C_i$ then force every variable of $C_i$
to be absent from $T$, so $T\subseteq T_i$.
Since $T_i$ is a model, maximality of $T$ implies $T=T_i$.

Conversely, $T_i$ satisfies every clause.
A strict extension must add either $y_i$ or a variable of $C_i$.
Adding $y_i$ violates the negative clause.
Adding a variable of $C_i$ while keeping $y_i$ false violates the
corresponding gate clause, whereas also adding $y_i$ again violates
the negative clause.
Hence $T_i$ is maximal.
\end{proof}

\begin{lemma}
\label{lem:kss-z0}
The maximal models of $\Gamma_F$ omitting $z$ are in bijection with
the one-in-three satisfying assignments of $F$.
Every such maximal model also omits all variables $y_i$.
\end{lemma}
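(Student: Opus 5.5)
We want to show that the maximal models of $\Gamma_F$ omitting $z$ correspond bijectively to the one-in-three satisfying assignments of $F$, and that such models contain no $y_i$. The natural map sends a model $M$ with $z\notin M$ to $M\cap X_F$. The inverse sends a satisfying assignment $A$ to $A$ itself, viewed as an assignment of $\Gamma_F$ with $z$ and every $y_i$ false.

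\emph{Step 1: models with $z$ false.} Let $M$ be a model with $z\notin M$.
\begin{itemize}
\item Each back clause $\{y_i\}\to z$ forces $y_i\notin M$, so $M\subseteq X_F$.
\item The gate clauses and the negative clause are then vacuously satisfied.
\item The pair clauses require $|M\cap C_i|\le1$ for every $i$.
\end{itemize}
Conversely, any $A\subseteq X_F$ with $|A\cap C_i|\le1$ for all $i$ is a model omitting $z$. So the models omitting $z$ are exactly the sets $A\subseteq X_F$ meeting every clause in at most one variable. In particular every such model omits all $y_i$, which is the second assertion.

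\emph{Step 2: maximality for a satisfying assignment.} Let $A$ be a one-in-three satisfying assignment; we show it is maximal among all models. Suppose a model $M\supsetneq A$ exists.
\begin{itemize}
\item If $z\notin M$, Step 1 gives $M\subseteq X_F$ with $|M\cap C_i|\le1$. Adding any variable $x\notin A$ breaks the clause containing $x$, since that clause already meets $A$ exactly once. (Every variable lies in some clause, because unused variables were deleted.) So this case is impossible.
\item If $z\in M$, the negative clause forces $y_i\notin M$ for some $i$. The gate clauses $\{z,x_{i,j}\}\to y_i$ then force $M\cap C_i=\varnothing$. This contradicts $|A\cap C_i|=1$ with $A\subseteq M$.
\end{itemize}
Hence $A$ is a maximal model omitting $z$.

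\emph{Step 3: every maximal model omitting $z$ is satisfying.} Let $M$ be maximal with $z\notin M$. By Step 1, $M\subseteq X_F$ and $|M\cap C_i|\le1$ for all $i$. Suppose some clause has $M\cap C_i=\varnothing$. Then $M\subseteq T_i$, since $T_i$ contains all of $X_F\setminus C_i$. Moreover $z\in T_i\setminus M$, so the inclusion is strict. By Lemma~\ref{lem:kss-z1}, $T_i$ is a model, which contradicts the maximality of $M$. Therefore every clause is met exactly once and $M$ is a one-in-three assignment.

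The two maps $M\mapsto M\cap X_F$ and $A\mapsto A$ are mutually inverse, which gives the bijection.

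The main obstacle is Step 3. Maximality must be taken over all models, including those containing $z$. The sets $T_i$ provide exactly the dominating model needed when a clause is left uncovered.
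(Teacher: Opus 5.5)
Your proof is correct and follows essentially the same route as the paper. The back and pair clauses restrict models omitting $z$ to subsets of $X_F$ meeting each $C_i$ at most once, an uncovered clause is ruled out by the strictly larger model $T_i$, and a one-in-three assignment is maximal because extensions omitting $z$ violate a pair clause while extensions containing $z$ violate a gate clause or the negative clause. You additionally make explicit a few points the paper leaves implicit: the strictness via $z\in T_i\setminus M$, and the use of the deletion of unused variables.
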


\begin{proof}
Let $T$ be a model omitting $z$.
The back clauses force every $y_i$ to be absent, and the pair clauses
imply that at most one source variable is present in each clause.
If $T$ is maximal and some clause $C_i$ contains no present source
variable, then $T\subsetneq T_i$, contradicting maximality.
Thus exactly one source variable is present in each clause.

Conversely, let $M\subseteq X_F$ be a one-in-three satisfying
assignment and extend it by keeping $z$ and all $y_i$ false.
The resulting assignment satisfies $\Gamma_F$.
If a strict extension still omits $z$, then adding some $y_i$ violates
the corresponding back clause, while adding a previously false source
variable creates a clause with two true variables and violates a pair
clause.
If the extension contains $z$, the unique true source variable in
each clause forces every $y_i$ to be true, after which the negative
clause is violated.
Hence the model is maximal.
\end{proof}

\begin{corollary}
\label{cor:kss-family}
The formula $\Gamma_F$ and the family
$\mathcal L_F:=\{T_i:i\in[m]\}$ can be constructed in polynomial time,
and every clause of $\Gamma_F$ has a nonempty premise.
Moreover,
\[
  \MaxMod(\Gamma_F)
  =
  \mathcal L_F
  \mathbin{\dot\cup}
  \bigl\{
    M\subseteq X_F:
    |M\cap C_i|=1\text{ for every }i\in[m]
  \bigr\}.
\]
Consequently,
$\MaxMod(\Gamma_F)=\mathcal L_F$ if and only if $F$ is a no-instance,
whereas $F$ is a yes-instance if and only if
$\Gamma_F$ has a maximal model omitting $z$.
On a no-instance, the complete maximal-model output is the explicitly
known family $\mathcal L_F$ of size $m$.
\end{corollary}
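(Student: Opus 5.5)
The plan is to read \cref{cor:kss-family} almost entirely off \cref{lem:kss-z1} and \cref{lem:kss-z0}, and then add a small amount of bookkeeping.

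\emph{Construction and premises.} We first confirm polynomial-time constructibility and nonempty premises. $\Gamma_F$ has $|X_F|+m+1$ variables and $7m+1$ clauses, each of constant size except the negative clause, which has size $m+1$. So $\|\Gamma_F\|_{\mathrm{inc}}=O(m+|X_F|)$, and since every variable occurs in some clause, this is $O(m)$. Each $T_i$ is written as a bit vector of the same length, so $\mathcal L_F$ has encoded size $O(m^2)$. The premises are visibly nonempty: the pair premises, gate premises, back premise $\{y_i\}$, and negative premise are all nonempty.

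\emph{The decomposition.} Every maximal model either contains $z$ or omits it. By \cref{lem:kss-z1}, the maximal models containing $z$ are exactly $T_1,\dots,T_m$. By \cref{lem:kss-z0}, the maximal models omitting $z$ are exactly the sets $M\subseteq X_F$ with $|M\cap C_i|=1$ for all $i$. Here I would check that the bijection in that lemma is the identity on such $M$: the maximal model omits $z$ and every $y_i$, so it is a subset of $X_F$. The union is disjoint because the two parts are separated by membership of $z$.

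\emph{Size of $\mathcal L_F$.} To claim that $\mathcal L_F$ has size exactly $m$, the $T_i$ must be pairwise distinct. This holds because $y_i\notin T_i$ while $y_i\in T_j$ for $j\ne i$.

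\emph{The equivalences.} If $F$ is a yes-instance, the second part of the decomposition is nonempty, and its members omit $z$; hence $\MaxMod(\Gamma_F)\ne\mathcal L_F$ and a maximal model omitting $z$ exists. If $F$ is a no-instance, the second part is empty, so $\MaxMod(\Gamma_F)=\mathcal L_F$, and every maximal model (each $T_i$) contains $z$. Together these give both "if and only if" statements and the final sentence.

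\emph{Main obstacle.} No step is substantive. The only point needing care is the precise identification of the $z$-free maximal models as literal subsets of $X_F$, rather than merely sets in bijection with satisfying assignments, and I would take this directly from the second sentence of \cref{lem:kss-z0}.
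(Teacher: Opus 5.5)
Your proposal is correct and matches the paper's proof, which likewise reads the decomposition off \cref{lem:kss-z1} and \cref{lem:kss-z0} by splitting on whether $z$ is present, and treats the remaining claims as immediate from the construction. Your explicit checks fill in details the paper leaves implicit: that the $T_i$ are pairwise distinct (via $y_i$), and that the $z$-free maximal models are literally the one-in-three subsets of $X_F$. One harmless quibble: the clause count is only at most $7m+1$, because pair clauses coming from different $C_i$ that share two variables coincide when duplicates are removed.
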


\begin{proof}
The description of the maximal models containing $z$ follows from
\cref{lem:kss-z1}, and the description of those omitting $z$ follows
from \cref{lem:kss-z0}.
The remaining statements follow directly from the construction.
\end{proof}

\section{Triangle expansion for exact frequency two}
\label{app:freq2-extension}

This appendix proves the graph transformation used in
\cref{thm:freq2-extension-hard}.
Hamiltonian Cycle is $\NP$-complete for planar cubic $3$-connected
graphs \citep{GareyJohnsonTarjan1976}; such graphs are simple and have
at least four vertices.

Let $G$ be a cubic graph.
For each vertex $v$, replace $v$ by a triangle with ports
$v_1,v_2,v_3$, and attach the three former incident edges bijectively
to these ports.
Denote the resulting cubic graph by $G^\triangle$ and distinguish the
triangle edge $f_v:=v_1v_2$.

\begin{lemma}
\label{lem:triangle-expansion}
The graph $G$ has a Hamiltonian cycle if and only if $G^\triangle$
has a simple cycle containing every distinguished edge $f_v$.
For fixed port assignments and distinguished edges, the two cycles
determine each other uniquely.
\end{lemma}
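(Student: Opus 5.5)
The plan is to show that the two cycles determine each other through local behaviour at each triangle. Fix a vertex $v$ of $G$ with ports $v_1,v_2,v_3$. Let $e_1,e_2,e_3$ be the former incident edges, attached to these ports, so that $e_j$ meets the triangle at $v_j$. The key local fact concerns a simple cycle $Z$ in $G^\triangle$ and the number of external edges $e_j$ it uses. Because each port has degree two in $Z$ or degree zero, this number is either zero or exactly two. It cannot be one or three, since every passage of $Z$ through the triangle enters on one external edge and leaves on another.

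First, consider a triangle whose external edges are not used by $Z$. Then $Z$ lies entirely inside the triangle or avoids it. If $Z$ is the triangle itself, it contains only the single distinguished edge $f_v$. Since $G$ has at least four vertices, every other $f_u$ is then missed. Hence, when $|V(G)|\ge2$, a cycle containing all distinguished edges must use exactly two external edges at every triangle.

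Second, suppose $Z$ uses $e_i$ and $e_j$. Inside the triangle, $Z$ must connect $v_i$ to $v_j$, and there are two ways to do this. It can take the direct edge $v_iv_j$, or it can take the path $v_i v_k v_j$ through the third port. The edge $f_v=v_1v_2$ lies on this connection exactly in three cases:
\begin{itemize}
\item $\{i,j\}=\{1,2\}$ and the route is direct;
\item $\{i,j\}=\{1,3\}$ and the route passes through $v_2$;
\item $\{i,j\}=\{2,3\}$ and the route passes through $v_1$.
\end{itemize}
So for each pair $\{i,j\}$ exactly one internal route contains $f_v$, and that route is forced once the pair is fixed.

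Now the forward direction. Given a Hamiltonian cycle $H$ in $G$, each vertex $v$ has exactly two incident cycle edges, and these determine the pair $\{i,j\}$. I would replace $v$ by the unique internal route for that pair containing $f_v$. The result is a closed walk, and it is simple because distinct triangles are vertex-disjoint and each internal route is a path. It therefore contains all $f_v$.

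The converse runs the other way. Take a simple cycle $Z$ containing all $f_v$. By the first step, $Z$ uses exactly two external edges at every triangle. Contracting the triangles maps $Z$ to a closed walk in $G$ that visits each vertex exactly once. This is a Hamiltonian cycle, provided the contracted edges are distinct, which holds since $G$ is simple with at least three vertices.

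Uniqueness follows from the two steps together. The set of external edges of $Z$ is exactly $H$. Conversely, $H$ fixes the pair at each triangle, and the requirement $f_v\in Z$ fixes the internal route. So $Z$ and $H$ determine each other.

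The main obstacle I expect is the degree/parity argument at each triangle. It must exclude the triangle-only cycle and rule out routes that visit a triangle twice. The latter is impossible because each triangle has only three ports.
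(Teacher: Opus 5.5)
Your proof is correct and follows essentially the same route as the paper: the unique internal path through each triangle that joins the two used ports and contains $f_v$, exclusion of the triangle-only cycle, and contraction of the triangles to recover a connected $2$-regular spanning subgraph of $G$, with uniqueness coming from the forced local path. The only cosmetic difference is how the bare-triangle case is excluded: you use the parity of the number of external edges at a gadget, whereas the paper notes that a cycle using all three triangle edges must be the whole triangle.
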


\begin{proof}
Let $D$ be a Hamiltonian cycle of $G$.
At each source vertex it uses two ports.
Replace that vertex by the unique path in its triangle joining those
ports and containing $f_v$: use $v_1v_2$ for ports
$\{v_1,v_2\}$, $v_1v_2v_3$ for $\{v_1,v_3\}$, and
$v_2v_1v_3$ for $\{v_2,v_3\}$.
Together with the external edges of $D$, these paths form a simple
cycle of $G^\triangle$ containing every $f_v$.

Conversely, let $C$ be a simple cycle of $G^\triangle$ containing all
$f_v$.
Inside one triangle, $C$ cannot use all three triangle edges: then all
three ports would already have degree two in $C$, so that triangle
would be the entire cycle, contradicting the presence of the
distinguished edges in the other gadgets.
Hence the intersection of $C$ with each triangle is one of the three
paths listed above, and exactly two external edges of the gadget lie
on $C$.
Contracting every such path gives a connected spanning $2$-regular
subgraph of $G$, hence a Hamiltonian cycle.
The local path is unique for each pair of used ports, so the two
transformations are inverse.
\end{proof}

To obtain the hypergraph used in the reduction, take
$E(G^\triangle)$ as the ground set and, for every
$u\in V(G^\triangle)$, add the hyperedge
$\delta(u)$ consisting of the three graph edges incident with $u$.
The hypergraph is three-uniform, and every ground element has
frequency exactly two.
Its apex multigraph is $G^\triangle$ together with an isolated apex,
so \cref{prop:freq2-structure} identifies its minimal nonempty
stopping sets with the simple cycles of $G^\triangle$.
With $P=\varnothing$ and
$N=\{f_v:v\in V(G)\}$, complement duality shows that an extending
coatom exists exactly when $G^\triangle$ has a simple cycle containing
all distinguished edges.
By \cref{lem:triangle-expansion}, this is equivalent to a Hamiltonian
cycle in $G$.
The reduction has linear size.

\section{Polynomial delay for the frequency-two enumerator}
\label{app:cycle-delay}

This appendix supplies the direct polynomial-delay 
and polynomial-space implementation used in \cref{cor:freq2-enum}. 
The output-linear total-time bound in that corollary follows separately 
from the optimal cycle-listing algorithm of \citet{BirmeleEtAl2013}.

Let $Q_{\cH}$ be the apex multigraph from \cref{sec:freq2}.
Output its apex loops directly, delete them, and subdivide every
remaining labeled edge $e=uv$ by a distinct new vertex $s_e$.
The resulting graph $G$ is simple, and its simple cycles are in
bijection with the nonloop graphic circuits of $Q_{\cH}$.

Fix a total order on $V(G)$.
Every simple cycle has a unique smallest vertex $s$ and a unique
unordered pair $\{a,b\}$ of its two neighbors on the cycle.
Both neighbors are larger than $s$, and deleting $s$ leaves a simple
$a$--$b$ path in the subgraph induced by the vertices larger than
$s$.
We therefore process all tasks $(s,\{a,b\})$ with
$a,b\in N_G(s)$ and $s<a,b$.
For one task, orient the pair by $a<b$. 
First test whether $b$ is reachable from $a$ in $G[\{v:v>s\}]$ and 
skip the task if it is not. Otherwise, enumerate the simple 
$a$--$b$ paths in this induced subgraph by depth-first search.

A search state is a simple path $P$ from $a$ to a current endpoint
$v$.
For a neighbor $u\notin V(P)$, output if $u=b$; otherwise recurse only
when $b$ is reachable from $u$ after deleting all vertices of $P$.
The reachability test is necessary and sufficient for a simple
continuation, so every retained search node has an output descendant.
The minimum vertex and its two cycle neighbors determine the task
uniquely, and the orientation $a<b$ prevents reversal duplicates.
Thus every simple cycle is output exactly once.

Let $n=|V(G)|$ and $m=|E(G)|$.
The search depth is at most $n$. Between two consecutive outputs within 
a nonempty task, the depth-first search backtracks through at most $n$ levels 
and tests at most $n$ candidate neighbors at each level. 
Since one reachability test costs $O(n+m)$ time, the delay within a 
nonempty task is $O(n^2(n+m))$. An empty task is rejected by the initial reachability test,
and the number of tasks is at most
$\sum_s\binom{\deg_G(s)}2=O(n^3)$.
Therefore the delay before the first output, between consecutive
outputs, and after the last output is
$O(n^3(n+m))$.

The recursion stack, path marks, adjacency lists, and reusable
reachability workspace require $O(n+m)$ space.
Decoding the subdivision vertices recovers the corresponding circuit
of $Q_{\cH}$, and complementing its labels gives the coatom.

\section{A direct embedding of minimal-transversal enumeration}
\label{app:transversal}

This appendix gives a linear-size, 
output-bijective reduction from minimal-transversal enumeration to \HHEnum{}. 
Unlike the classical key--antikey blocker identity, 
the construction applies to an arbitrary explicit source hypergraph 
and does not require realizability as a minimal-key clutter. 
It is not used in the non-$\OutputP$ proof of \cref{thm:rank3-hardness}; 
its purpose is to record an exact correspondence between 
the two output families and, in particular, 
to transfer upper bounds for \HHEnum{} to minimal-transversal enumeration.

Minimal-transversal enumeration admits an
output-quasipolynomial algorithm, while the existence of an
output-polynomial algorithm remains open
\citep{FredmanKhachiyan1996,EiterGottlob1995,
EiterMakinoGottlob2008}.
Unlike the representation theorem of \cref{thm:compiler}, the
construction below does not first encode a Horn formula.
It translates the transversal condition directly into a stopping-set
condition, with one nonempty stopping set for every source transversal
and no additional nonempty stopping sets.

Let $\cA\subseteq 2^X$ be a hypergraph.
Duplicate source hyperedges may be removed, 
but an empty source hyperedge, if present, 
is retained because it rules out every transversal. 
In this appendix, we use the incidence length 
$L(\cA):=|X|+|\cA|+\sum_{E\in\cA}|E|$ with the empty hyperedge retained.
A set $T\subseteq X$ is a \emph{transversal} of $\cA$ if
$T\cap E\ne\varnothing$ for every $E\in\cA$.
We denote the family of inclusion-minimal transversals by $\Tr(\cA)$.

Introduce a root element $r$ and, for every $x\in X$, a new element
$\bar x$.
The ground set of the target hypergraph $\cH_{\cA}$ is
$U_{\cA}:=X\cup\{r\}\cup\{\bar x:x\in X\}$.
For each $x\in X$, add the two guard hyperedges
$\{r,\bar x\}$ and $\{r,x,\bar x\}$.
For each source hyperedge $E\in\cA$, add the hyperedge
$\{r\}\cup E$.
Finally, let
$B_{\cA}:=\{r\}\cup\{\bar x:x\in X\}$.

The guards make the all-zero assignment the only stopping set with
$r=0$.
Once $r=1$, they force every $\bar x$ to be selected while leaving
$x$ unrestricted.
The hyperedge $\{r\}\cup E$ then requires the selected source
coordinates to meet $E$.

\begin{theorem}
\label{thm:transversal}
The map $T\mapsto B_{\cA}\cup T$ is an inclusion-order isomorphism
from the family of all transversals of $\cA$ onto
$\Stop(\cH_{\cA})\setminus\{\varnothing\}$.
Consequently,
$\MinStop(\cH_{\cA})=\{B_{\cA}\cup T:T\in\Tr(\cA)\}$.
Under complement duality, the map
$T\mapsto U_{\cA}\setminus(B_{\cA}\cup T)=X\setminus T$
is a bijection from $\Tr(\cA)$ onto the coatom family of
$\Phi_{\cH_{\cA}}$.

The construction has incidence length $O(L(\cA))$ and can be carried
out in $O(L(\cA))$ time.
For bit-vector outputs, the forward and inverse maps are computable in
$O(L(\cA))$ time per output.
\end{theorem}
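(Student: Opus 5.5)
The plan is to characterize the stopping sets of $\cH_{\cA}$ by a case split on the root bit $r$, in the style of \cref{lem:root}, and then read off the order isomorphism, the minimal elements and the complexity claims.

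\emph{Case $r\notin S$.} Let $S$ be a stopping set. The guard $\{r,\bar x\}$ is an equality edge, so $\bar x\notin S$ for every $x\in X$. The guard $\{r,x,\bar x\}$ then meets $S$ in $\{x\}\cap S$, which forces $x\notin S$. Hence $S=\varnothing$.

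\emph{Case $r\in S$.} The equality edge gives $\bar x\in S$ for all $x$, so $B_{\cA}\subseteq S$. Each guard $\{r,x,\bar x\}$ then meets $S$ in two or three elements, so it imposes no condition on $x$. Write $T=S\cap X$. The edge $\{r\}\cup E$ meets $S$ in $1+|E\cap T|$ elements, so it is satisfied exactly when $E\cap T\neq\varnothing$. An empty source edge becomes the singleton $\{r\}$, which forbids $r=1$; this matches the fact that no transversal exists. Therefore the nonempty stopping sets are exactly the sets $B_{\cA}\cup T$ with $T$ a transversal.

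\emph{Order isomorphism and consequences.} The map $T\mapsto B_{\cA}\cup T$ preserves inclusion, and it reflects inclusion because intersecting with $X$ recovers $T$; this also shows it is injective. Surjectivity onto the nonempty stopping sets is the case analysis above. Minimal elements correspond under an order isomorphism, which gives $\MinStop(\cH_{\cA})=\{B_{\cA}\cup T:T\in\Tr(\cA)\}$. \Cref{prop:complement-duality} then yields the coatom bijection $T\mapsto U_{\cA}\setminus(B_{\cA}\cup T)=X\setminus T$.

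\emph{Size and normalization.} The construction has $2|X|+1$ elements. It has $2|X|+|\cA|$ hyperedges with $5|X|+|\cA|+\sum_E|E|$ incidences, so its size is linear in $L(\cA)$. All edges are distinct: the guards are indexed by distinct $\bar x$, and the edges $\{r\}\cup E$ contain no barred element and differ for distinct $E$. The forward map writes a bit vector of length $O(|X|)$, and the inverse map restricts to $X$, so both take $O(L(\cA))$ time per output.

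The only delicate point is the boundary behaviour. When $\cA=\varnothing$, the empty set is the unique minimal transversal and should map to the stopping set $B_{\cA}$. When $\varnothing\in\cA$, both families are empty. Both cases fall out of the general argument once the singleton edge $\{r\}$ is kept, so I expect no real obstacle beyond checking them.
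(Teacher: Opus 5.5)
Your proposal is correct and follows the paper's proof essentially step for step: the same case split on $r$ using the guard edges, the same count $1+|E\cap T|$ for the edges $\{r\}\cup E$, the same order-isomorphism and complement-duality argument, and the same element, hyperedge and incidence counts. Your explicit normalization check and your treatment of the boundary cases $\cA=\varnothing$ and $\varnothing\in\cA$ are minor additions; the paper discusses the boundary cases in a remark after the theorem.
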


\begin{proof}
Consider the two guard hyperedges associated with one element
$x\in X$.
The hyperedge $\{r,\bar x\}$ forces the membership bits of $r$ and
$\bar x$ to agree.
If $r=0$, then $\bar x=0$, and the hyperedge
$\{r,x,\bar x\}$ forces $x=0$.
Since this holds for every $x\in X$, the empty set is the only
stopping set with $r=0$.

Let $S$ now be a nonempty stopping set of $\cH_{\cA}$.
Then $r\in S$, and the guard edges force $\bar x\in S$ for every
$x\in X$.
With these two elements selected, the hyperedge
$\{r,x,\bar x\}$ meets $S$ in either two or three elements and
imposes no restriction on whether $x$ belongs to $S$.
Put $T:=S\cap X$.
For every $E\in\cA$, we have
$|(\{r\}\cup E)\cap S|=1+|E\cap T|$.
This number differs from one exactly when $E\cap T\ne\varnothing$.
Thus $S$ is a nonempty stopping set exactly when
$S=B_{\cA}\cup T$ for a transversal $T$ of $\cA$.
Conversely, every set of this form satisfies all guard hyperedges and
all hyperedges associated with members of $\cA$ whenever $T$ is a
transversal.

Because every nonempty stopping set contains the same fixed block
$B_{\cA}$, the map $T\mapsto B_{\cA}\cup T$ preserves and reflects
inclusion.
It therefore sends the inclusion-minimal transversals exactly to the
inclusion-minimal nonempty stopping sets.
The coatom correspondence follows from
\cref{prop:complement-duality}.

The target has $2|X|+1$ ground elements, $2|X|+|\cA|$ hyperedges,
and $5|X|+\sum_{E\in\cA}(|E|+1)$ incidences.
These quantities are linear in $L(\cA)$, and the construction can be
written down in the same time.
For a target coatom $K$, the inverse output map is
$T=(U_{\cA}\setminus K)\cap X$.
Both output transformations are linear in the target and output
encodings.
\end{proof}

The theorem includes the two boundary cases.
If $\cA=\varnothing$, then $\varnothing$ is the unique minimal
transversal and $B_{\cA}$ is the unique minimal nonempty stopping
set.
If $\varnothing\in\cA$, then $\cH_{\cA}$ contains the singleton
hyperedge $\{r\}$, so it has no nonempty stopping set, in agreement
with the fact that $\cA$ has no transversal.

\begin{corollary}
\label{cor:transversal-reduction}
Explicit minimal-transversal enumeration reduces to \HHEnum{} by a
linear-size, output-bijective reduction.
\end{corollary}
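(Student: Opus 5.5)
The plan is to derive the corollary directly from \cref{thm:transversal}. First I fix what "linear-size, output-bijective reduction" has to mean here. Given an explicit source hypergraph $\cA$, we build $\cH_{\cA}$ in $O(L(\cA))$ time. Its incidence length is $O(L(\cA))$. There is a bijection between $\Tr(\cA)$ and the outputs of \HHEnum{} on $\cH_{\cA}$, and that bijection and its inverse are computable in time linear in the encodings.

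Step one is normalization. \HHEnum{} takes normalized incidence lists, so I check that $\cH_{\cA}$ can be brought to normalized form without changing its stopping sets.
\begin{itemize}
\item Duplicate source hyperedges are removed beforehand.
\item The guard hyperedges $\{r,\bar x\}$ and $\{r,x,\bar x\}$ are pairwise distinct.
\item Each $\{r\}\cup E$ contains no $\bar x$ and differs from $\{r,\bar x\}$. It also differs from $\{r,x,\bar x\}$, because only the latter contains $\bar x$.
\item Distinct $E$ give distinct $\{r\}\cup E$.
\item No hyperedge is empty, since every one contains $r$.
\item When $\varnothing\in\cA$, the hyperedge produced is $\{r\}$. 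This is a legitimate singleton hyperedge, not an empty one, so it survives normalization.
\end{itemize}
If duplicates still have to be removed after the construction, the sorting normalization from \cref{sec:horn-hypergraph-prelim} takes $O(L\log L)$ time. Avoiding that cost is the reason to deduplicate the source hyperedges in advance.

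Step two is the output correspondence. By \cref{thm:transversal}, the map $T\mapsto X\setminus T$ is a bijection from $\Tr(\cA)$ onto $\cM(\Phi_{\cH_{\cA}})$. For an output coatom $K$, the inverse is $T=X\setminus(K\cap X)$, equivalently $(U_{\cA}\setminus K)\cap X$. Every target output therefore yields exactly one minimal transversal, and every minimal transversal arises exactly once. Empty output families correspond to each other, which covers the boundary cases $\cA=\varnothing$ and $\varnothing\in\cA$ discussed after the theorem.

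Step three, and the only real point to check, is size accounting. I need the total target output length to be polynomially, in fact linearly, related to the source output length, so that an $\OutputP$ (or polynomial-delay) enumerator for \HHEnum{} gives one for minimal transversals.
\begin{itemize}
\item A target coatom is written as a $|U_{\cA}|=2|X|+1$ bit vector, so it has length $O(|X|)$.
\item A source transversal written as a bit vector has length $|X|$.
\item With $N$ outputs, the target total is $O(N|X|)$, which matches the source total.
\item If sparse encodings are used instead, the sizes still correspond: the sparse complement of a target coatom is $B_{\cA}\cup T$, of size $|X|+1+|T|$, which exceeds the source encoding by $O(|X|+1)=O(L(\cA))$ per output.
\end{itemize}
The delay and total-time bounds therefore transfer with only a linear overhead.
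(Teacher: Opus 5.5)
Your proposal is correct and follows the paper's proof. The paper simply applies \cref{thm:transversal} with the mutually inverse output maps $T\mapsto X\setminus T$ and $K\mapsto(U_{\cA}\setminus K)\cap X$, and your normalization and output-length checks only make explicit what the paper leaves implicit. One small slip: for $\cA=\varnothing$ the output family is $\{\varnothing\}$, mapped to the single coatom $X$, not the empty family; the theorem's bijection covers this case regardless.
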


\begin{proof}
Apply \cref{thm:transversal} and use the mutually inverse output maps
$T\mapsto X\setminus T$ and
$K\mapsto(U_{\cA}\setminus K)\cap X$.
\end{proof}

The direction of the reduction is
$\textsc{Trans-Enum}\le\HHEnum{}$.
Thus a lower bound for minimal-transversal enumeration transfers to
\HHEnum{}, while an upper bound for \HHEnum{} transfers to
minimal-transversal enumeration.
The converse implications do not follow.
In particular, the output-quasipolynomial algorithm for
minimal-transversal enumeration
\citep{FredmanKhachiyan1996} does not yield an algorithm with the same
guarantee for \HHEnum{}.
The corollary therefore records an exact containment of output
problems rather than an additional non-$\OutputP$ lower bound.

The construction is different from the classical key--antikey blocker
identity.
For a closure system, the complements of its coatoms are the minimal
transversals of its minimal-key clutter
\citep{Thi1986,DemetrovicsThi1987,Son2006}.
Using this identity algorithmically requires the minimal-key clutter
to be explicitly available.
The construction above instead starts from an arbitrary explicitly
given hypergraph $\cA$ and encodes its transversal condition directly.
It does not require $\cA$ to be realizable as the minimal-key family
of a hypergraph Horn function; key realization for hypergraph Horn
functions is studied by \citet{BercziBorosMakino2024}.

The embedding concerns the unrestricted enumeration problem.
A source hyperedge $E$ becomes a target hyperedge of size $|E|+1$,
and the root belongs to every guard hyperedge and every hyperedge
associated with a member of $\cA$.
The construction therefore does not preserve bounded rank or bounded
element frequency.
The bounded-parameter results in \cref{sec:thresholds} use different
transformations.

\bibliographystyle{plainnat}
\bibliography{./coatom_Horn.bib}

\end{document}